\setlist[itemize]{leftmargin=\parindent,itemsep=3pt,parsep=2pt}
\setlist[enumerate]{label=\textup{(\roman{*})},labelindent=3pt,leftmargin=*}
\newcounter{i} 
\newtoks\striche 
\newcommand{\pushright}[1]{\ifmeasuring@#1\else\omit\hfill$\displaystyle#1$\fi\ignorespaces}
\newcommand{\R}{\mathbb{R}}
\newcommand{\N}{\mathbb{N}} 
\newcommand{\C}{\mathbb{C}}
\newcommand{\Lp}[1]{\mathrm{L}^{#1}} 
\newcommand{\sobolev}[1]{\mathrm{W}^{#1}} 
\newcommand{\Lb}{\mathcal{L}_{\mathrm{b}}} 
\newcommand{\iu}{\mathrm{i}} 
\newcommand{\diffd}{\mathrm{d}} 
\newcommand{\dx}[1][x]{\,\diffd#1}
\renewcommand{\vec}[1]{\mathbf{#1}}
\newcommand{\cl}[2][]{\overline{#2}\ifthenelse{ \equal{#1}{} }{}{^{#1}}} 
\newcommand{\conj}[1]{\overline{#1}} 
\newcommand{\quspace}[3][]{#2\big/#3} 
\newcommand{\ball}{\mathrm{B}}
\DeclareMathOperator{\ran}{ran}
\DeclareMathOperator{\supp}{supp}
\DeclareMathOperator{\dom}{dom}
\DeclareMathOperator{\Div}{div}
\renewcommand{\div}{\Div}
\newcommand{\grad}{\nabla}
\DeclareMathOperator{\rot}{curl}
\DeclareMathOperator{\dist}{dist}
\DeclarePairedDelimiter{\set}{\{}{\}}
\DeclarePairedDelimiter{\norm}{\lVert}{\rVert}
\DeclarePairedDelimiter{\abs}{\vert}{\vert}
\DeclarePairedDelimiterX{\dset}[2]{\{}{\}}{#1\,\delimsize\vert\,\mathopen{} #2}
\DeclarePairedDelimiterX{\scprod}[2]{\langle}{\rangle}{#1,#2}
\DeclarePairedDelimiterX{\dualprod}[2]{\langle}{\rangle}{#1,#2}
\DeclarePairedDelimiterX{\sdprod}[2]{\llangle}{\rrangle}{#1,#2} 
\renewcommand{\Re}{\operatorname{Re}}
\newcommand{\dual}{'}
\newcommand{\adjunsymb}{\ast} 
\newcommand{\adjun}[1][1]{%
  \setcounter{i}{1}%
  \striche={\adjunsymb}%
  \loop%
  \ifnum\value{i}<#1%
  \striche=\expandafter{\the\expandafter\striche\adjunsymb}%
  \stepcounter{i}%
  \repeat%
  ^{\the\striche}%
}
\newcommand{\mapping}[4]{%
  \left\{%
    \begin{array}{rcl}%
      #1 &\to & #2, \\
      #3 &\mapsto & #4
    \end{array}%
  \right.%
}
\newcommand{\hermitian}{^{\mathsf{H}}}
\newcommand{\Hspace}{\mathrm{H}}
\newcommand{\hamiltonian}{\mathcal{H}}
\newcommand{\hH}{\hat{\Hspace}}
\newcommand{\cH}{\mathring{\Hspace}}
\newcommand{\cohom}{\mathcal{H}}
\newcommand{\conC}{\mathrm{C}}
\newcommand{\Cc}[1][\infty]{\mathring{\mathrm{C}}^{#1}}
\newcommand{\XH}{\mathcal{X}_{\hamiltonian}}
\newcommand{\boundtr}[1][]{\gamma_{0}\ifthenelse{\equal{#1}{}}{}{\big\vert_{#1}}}
\newcommand{\normaltr}[1][]{\gamma_{\nu}\ifthenelse{\equal{#1}{}}{}{\big\vert_{#1}}}
\newcommand{\tantr}[1][]{\pi_{\tau}\ifthenelse{\equal{#1}{}}{}{\big\vert_{#1}}}
\newcommand\tantr*[1][]{\pi_{\tau}\ifthenelse{\equal{#1}{}}{}{\vert_{#1}}}
\newcommand{\tantrex}[1][]{\cl{\pi}_{\tau}\ifthenelse{\equal{#1}{}}{}{\big\vert_{#1}}}
\newcommand{\ctantr}[1][]{\mathring{\pi}_{\tau}\ifthenelse{\equal{#1}{}}{}{\big\vert_{#1}}}
\newcommand{\tanxtr}[1][]{\gamma_{\tau}\ifthenelse{\equal{#1}{}}{}{\big\vert_{#1}}}
\newcommand{\ctanxtr}[1][]{\mathring{\gamma}_{\tau}\ifthenelse{\equal{#1}{}}{}{\big\vert_{#1}}}
\newcommand{\cpt}{\overset{\mathsf{cpt}}{\hookrightarrow}}
\newcommand{\diffop}{L_{\partial}}
\newcommand{\diffopad}{L_{\partial}\hermitian}
\newcommand{\Vtau}[1][]{\mathcal{V}_{\tau\ifthenelse{\equal{#1}{}}{}{,#1}}}
\newcommand{\cVtau}[1][]{\mathring{\mathcal{V}}_{\tau\ifthenelse{\equal{#1}{}}{}{,#1}}}
\newcommand{\Vtaud}[1][]{\mathcal{V}_{\tau\ifthenelse{\equal{#1}{}}{}{,#1}}^{\times}}
\newcommand{\cVtaud}[1][]{\mathring{\mathcal{V}}_{\tau\ifthenelse{\equal{#1}{}}{}{,#1}}^{\times}}
\theoremstyle{plain}
\newtheorem{theorem}{Theorem}[section]
\newtheorem{lemma}[theorem]{Lemma}
\newtheorem{proposition}[theorem]{Proposition}
\newtheorem{corollary}[theorem]{Corollary}
\theoremstyle{definition}
\newtheorem{definition}[theorem]{Definition}
\newtheorem{problem}[theorem]{Problem}
\theoremstyle{remark}
\newtheorem{remark}[theorem]{Remark}
\title[Semi-uniform stability of Maxwell's equaitons]{Semi-uniform stabilization of anisotropic Maxwell's equations via boundary feedback on split boundary}
\author[N.~Skrepek]{Nathanael Skrepek\,\orcidlink{0000-0002-3096-4818}}
\email{nathanael.skrepek@math.tu-freiberg.de}
\author[M.~Waurick]{Marcus Waurick\,\orcidlink{0000-0003-4498-3574}}
\email{marcus.waurick@math.tu-freiberg.de}
\date{\today}
\keywords{Maxwell's equations, stability, semi-uniform stability, boundary feedback, Silver--M{\"u}ller, Leontovich, impedance boundary condition}
\subjclass{35Q61,35L03,47A40,47F05}
\address{TU Bergakademie Freiberg \\
  Institute of Applied Analysis \\
  Akademiestraße 6 \\
  D-09596 Freiberg \\
  Germany}
\begin{document}

\begin{abstract}
  We regard anisotropic Maxwell's equations as a boundary control and observation system on a bounded Lipschitz domain. The boundary is split into two parts: one part with perfect conductor boundary conditions and the other where the control and observation takes place.  We apply a feedback control law that stabilizes the system in a semi-uniform manner without any further geometric assumption on the domain. This will be achieved by separating the equilibriums from the system and show that the remaining system is described by an operator with compact resolvent. Furthermore, we will apply a unique continuation principle on the resolvent equation to show that there are no eigenvalues on the imaginary axis.
\end{abstract}

\maketitle

\section{Introduction}%
\label{sec:introduction}
Let $\Omega \subseteq \R^{3}$ be a bounded and connected strongly Lipschitz domain, which boundary is split into $\Gamma_{0}$ and $\Gamma_{1} \neq \emptyset$. Then we regard Maxwell's equations as a boundary control and observation system
\begin{subequations}
  \label{eq:maxwells-eq}
  \newcommand{\myvspace}{1ex} 
  \begin{alignat}{3}
    u(t,\zeta) &= \tantr  \vec{E}(t,\zeta), &\mspace{25mu}&\text{(boundary input)} &\mspace{25mu}& t\geq 0, \zeta \in \Gamma_{1}, \\[\myvspace]
    \tfrac{\partial}{\partial t} \vec{D}(t,\zeta) &= \rot \vec{H}(t,\zeta),
                                            &&\text{(Faraday/Maxwell law)}&&t\geq 0, \zeta \in \Omega, \label{eq:maxwells-eq-dynamic-1}
    \\
    \tfrac{\partial}{\partial t} \vec{B}(t,\zeta) &= -\rot \vec{E}(t,\zeta), &&\text{(Amp{\'e}re/Maxwell law)}&&t\geq 0, \zeta \in \Omega, \label{eq:maxwells-eq-dynamic-2}
    \\[\myvspace]
    \div \vec{D}(t,\zeta) &= \rho(\zeta), && \text{(Gau{\ss} law)}&&t\geq 0, \zeta \in \Omega,\\
    \div \vec{B}(t,\zeta) &= 0, && \text{(Gau{\ss} law for magnetism)}&&t\geq 0, \zeta \in \Omega,\\[\myvspace]
    \vec{D}(t,\zeta) &= \epsilon(\zeta) \vec{E}(t,\zeta), && \text{(material law)}&&t\geq 0, \zeta \in \Omega,\\
    \vec{B}(t,\zeta) &= \mu(\zeta) \vec{H}(t,\zeta), && \text{(material law)}&&t\geq 0, \zeta \in \Omega, \\[\myvspace]
    \tantr \vec{E}(t,\zeta) &= 0, &&\text{(perfect conductor)}&&t\geq 0, \zeta \in \Gamma_{0}, \\
    \normaltr \vec{B}(t,\zeta) &= 0, &&\text{(normal boundary cond.)} && t\geq 0, \zeta \in \Gamma_{0}, \\[\myvspace]
    \vec{E}(0,\zeta) &= \vec{E}_{0}(\zeta), &&\text{(initial value)}&& \phantom{t\geq 0,\mathclose{}} \zeta \in \Omega, \\
    \vec{H}(0,\zeta) &= \vec{H}_{0}(\zeta), &&\text{(initial value)}&& \phantom{t\geq 0,\mathclose{}} \zeta \in \Omega, \\[\myvspace]
    y(t,\zeta) &= \tanxtr \vec{H}(t,\zeta), &&\text{(boundary output)}&& t\geq 0, \zeta \in \Gamma_{1}, \\
    \intertext{with feedback law}
    u(t,\zeta) &= -k(\zeta) y(t,\zeta), && &&t\geq 0,\zeta \in \Gamma_{1}.
  \end{alignat}
\end{subequations}
The traces $\tantr \vec{E}$, $\tanxtr \vec{H}$ and $\normaltr \vec{B}$ are, roughly speaking, $(\nu \times \vec{E}\big\vert_{\partial\Omega}) \times \nu$, $\nu \times \vec{H}\big\vert_{\partial\Omega}$ and $\nu \cdot \vec{B} \big\vert_{\partial\Omega}$, respectively, for details see \Cref{sec:background-diffoperator-and-traces}.
The permittivity $\epsilon$ and the permeability $\mu$ are Lipschitz continuous matrix-valued functions (i.e., we allow anisotropic and inhomogeneous materials) such that $c^{-1} \leq \epsilon \leq c$ and $c^{-1} \leq \mu \leq c$ for a $c >0$ (in the sense of positive definitness).
(The Lipschitz continuity of $\epsilon$ and $\mu$ is necessary to apply a unique continuation principle.)
The feedback operator $k$ is also matrix-valued and satisfies $c^{-1} \leq k \leq c$ for a $c >0$ (w.l.o.g.\ the same $c$)---we do not ask for any further regularity but measurability. The charge density $\rho$ can be any $\Lp{2}(\Omega)$ function.

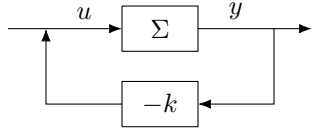
\begin{figure}[h]\centering
  \begin{tikzpicture}
    \draw[-Latex] (-0.5,0) -- (1,0);
    \draw (1,0.3) -- (1,-0.3) -- (2,-0.3) -- (2,0.3) -- (1,0.3);
    \draw[-Latex] (2,0) -- (3.5,0);

    \draw[-Latex] (3,0) -- (3,-1) -- (2,-1);
    \draw (1,-1.3) -- (1,-0.7) -- (2,-0.7) -- (2,-1.3) -- (1,-1.3);
    \draw[-Latex] (1,-1) -- (0,-1) -- (0,0);

    \node at (1.5,-1) {$-k$};

    \node at (1.5,0) {$\Sigma$};
    \node[above] at (0.5,0) {$u$};
    \node[above] at (2.5,0) {$y$};
  \end{tikzpicture}
  \caption{Feedback illustration}
  \label{fig:feedback}
\end{figure}

The boundary feedback law illustrated in \Cref{fig:feedback} results in a \emph{Silver--M{\"u}ller} or \emph{Leontovich} or \emph{impedance} \emph{boundary condition}\footnote{Silver--M{\"u}ller boundary conditions can sometimes have different forms, but by inverting and/or unitarily transforming $k$ they are equivalent.}
\begin{equation*}
  \tantr \vec{E}(t,\zeta) + k(\zeta) \tanxtr \vec{H}(t,\zeta) = 0, \qquad t \geq 0, \zeta \in \Gamma_{1}.
\end{equation*}

Our goal is to show that the system \eqref{eq:maxwells-eq} is \emph{semi-uniformly stable}, i.e., there exists a uniform decay rate such that every solution converges to an equilibrium state with this rate. More precisely this means that the corresponding semigroup is \emph{semi-uniformly stable}, see \Cref{def:semi-uniform-stability}. Semi-uniform stability is a concept that was introduced in \cite{BaDu08}. It is a stability concept that is between strong stability and exponential stability, i.e., exponential stability implies semi-uniform stability and semi-uniform stability implies strong stability.

Stability of Maxwell's equations with Silver--M{\"u}ller boundary conditions have been studied in several works. The goal was always to prove exponential stability and therefore additional assumptions were added.

\begin{itemize}
  \item In \cite{Ko94} the author regarded a domain $\Omega$ with $\conC^{1}$ boundary and assumed $\epsilon = \mu = 1$. The damping acts on the entire boundary.

  \item In \cite{Ph00} $\Omega$ has a $\conC^{\infty}$ boundary and $\Gamma_{1}$ satisfies the geometric control condition. The author worked with constant and scalar $\epsilon$ and $\mu$.

  \item In \cite{ElLaNi02a} the authors regard a domain $\Omega$ with $\conC^{\infty}$ boundary and $\epsilon$ and $\mu$ are scalar $\conC^{\infty}$ functions.
        The damping acts on the entire boundary and they additionally assume the existence of a $\zeta_{0} \in \Omega$ such that
        \begin{align*}
          (\zeta - \zeta_{0}) \grad \epsilon  \geq 0 \quad\text{and}\quad (\zeta - \zeta_{0}) \grad \mu  \geq 0 \quad\text{in}\quad \Omega.
        \end{align*}
        However, they allow $k$ to be in a class that also contains certain nonlinear operators.

  \item In \cite{NiPi03} $\Omega$ has a $\conC^{2}$ boundary and the damping acts on the entire boundary $\partial\Omega$. The functions $\epsilon$ and $\mu$ are scalar $\conC^{1}$ and may be non-autonomous. The function $k$ can be in a certain nonlinear class.

  \item In \cite{AnPo19} $\Omega$ has a $\conC^{\infty}$ boundary and the damping acts on the entire boundary. In that work there is an additional time delay in the boundary condition and a certain nonlinearity is allowed for $k$.

  \item In \cite{PoSchn20} $\Omega$ is strongly star-shaped and has a $\conC^{5}$ boundary and the damping acts on the entire boundary.
        They allow $\epsilon$ and $\mu$ to be even state-dependent, i.e., quasilinear Maxwell's equations.

\end{itemize}

Note that the entire boundary $\partial\Omega$ always satisfies the geometric control condition. Hence, all of the above references work, at least implicitly, with this condition. Apart from \cite{Ph00} none of these references work with split/mixed boundary conditions.

There are also other effects that can stabilize Maxwell's equations like distributed damping or memory terms, see e.g., \cite{NiPi05,DoIoWa23}.

We will regard a domain $\Omega$ with Lipschitz boundary $\partial\Omega$ that is split into $\Gamma_{0}$ and $\Gamma_{1}$. The boundary damping acts only on one part, namely $\Gamma_{1}$---note that $\Gamma_{1}$ does not need to be connected. Moreover, $\epsilon$ and $\mu$ are Lipschitz continuous positive matrix-valued functions that are uniformly bounded from above and below.
However, we do not show exponential stability, but semi-uniform stability and we do not prove an explicit decay rate. Most likely such a decay rate will depend on the geometry of $\Omega$ and $\Gamma_{1}$.

However, in contrast to the listed literature we can, for example, deal with anisotropic Maxwell's equations on a cube that is damped on one face of the cube.



We will follow a similar approach as in~\cite{JaSk21}, where semi-uniform stability for the wave equations was shown. That is splitting the equations in a time independent (for equilibriums) and a time dependent (for the ``actual'' dynamic) part and showing that the differential operator that corresponds to the time dependent part has no spectrum on the imaginary axis. This will be done by showing that the operator has a compact resolvent and employing a unique continuation principle.

The main theorem of this work reads as follows.

\begin{restatable}{theorem}{maintheorem}\label{th:main-theorem}
  Let
  \(
  \begin{bsmallmatrix}
    \vec{E}_{0} \\ \vec{H}_{0}
  \end{bsmallmatrix}
  \in
  \begin{bsmallmatrix}
    \hH(\rot,\Omega) \cap \epsilon^{-1} \Hspace(\div,\Omega) \\
    \hH_{\Gamma_{1}}(\rot,\Omega) \cap \mu^{-1} \hH_{\Gamma_{0}}(\div,\Omega)
  \end{bsmallmatrix}
  \)
  be an initial value that satisfies the boundary conditions
  \begin{align*}
    \tantr \vec{E}_{0} = 0, \normaltr \vec{H}_{0}= 0 \ \text{on}\ \Gamma_{0}
    \quad\text{and}\quad
    \tantr \vec{E} + k \tanxtr \vec{H} = 0\ \text{on} \ \Gamma_{1},
  \end{align*}
  and Gau{\ss} laws $\div \epsilon \vec{E}_{0} = \rho$ and $\div \mu \vec{H}_{0} = 0$.
  Then the corresponding solution
  \(
  \begin{bsmallmatrix}
    \vec{E} \\ \vec{H}
  \end{bsmallmatrix}
  (t,\zeta)
  \)
  of \eqref{eq:maxwells-eq} converges to an equilibrium state
  \(
  \begin{bsmallmatrix}
    \vec{E}_{\mathrm{e}} \\ \vec{H}_{\mathrm{e}}
  \end{bsmallmatrix}
  (\zeta)
  \)
  for $t \to \infty$.
  More precisely there exists a monotone decreasing $f \colon [0,+\infty) \to [0,+\infty)$ with $\lim_{t \to +\infty} f(t) = 0$, which is independent of the initial values such that
  \begin{align*}
    \norm*{\begin{bmatrix} \vec{E} \\ \vec{H}\end{bmatrix}(t,\cdot)
    - \begin{bmatrix} \vec{E}_{\mathrm{e}} \\ \vec{H}_{\mathrm{e}}\end{bmatrix}
    }_{\Lp{2}(\Omega)}
    \leq f(t) \left(
    \norm*{\begin{bmatrix} \vec{E}_{0} \\ \vec{H}_{0}\end{bmatrix} -
    \begin{bmatrix} \vec{E}_{\mathrm{e}} \\ \vec{H}_{\mathrm{e}}\end{bmatrix}
    }_{\Lp{2}(\Omega)}
    + \norm*{\begin{bmatrix} \rot \vec{E}_{0} \\ \rot \vec{H}_{0}\end{bmatrix}}_{\Lp{2}(\Omega)}
    \right).
  \end{align*}
\end{restatable}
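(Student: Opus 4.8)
The plan is to cast \eqref{eq:maxwells-eq} as an abstract Cauchy problem $\dot{x} = \mathcal{A}x$ on the Hilbert space $\mathcal{X} = \Lp{2}(\Omega)^{3} \times \Lp{2}(\Omega)^{3}$ equipped with the weighted inner product $\scprod{(\vec{E},\vec{H})}{(\vec{E}',\vec{H}')} = \scprod{\epsilon\vec{E}}{\vec{E}'} + \scprod{\mu\vec{H}}{\vec{H}'}$, where $\mathcal{A}$ is (up to the material weights) the Maxwell operator $\begin{bsmallmatrix} 0 & \epsilon^{-1}\rot \\ -\mu^{-1}\rot & 0 \end{bsmallmatrix}$ with domain encoding the split tangential/normal boundary conditions together with the impedance condition $\tantr\vec{E} + k\tanxtr\vec{H} = 0$ on $\Gamma_{1}$. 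The first step is the standard one: show via a Lumer--Phillips argument that $\mathcal{A}$ (suitably shifted by the inhomogeneous $\rho$-part) generates a contraction semigroup; dissipativity comes from integration by parts and the term $-\scprod{k\tanxtr\vec{H}}{\tanxtr\vec{H}}_{\Lp{2}(\Gamma_{1})} \leq 0$ produced by the feedback, and the range condition reduces to solving a static Maxwell-type problem. This also identifies the equilibrium manifold $\ker\mathcal{A}$: a pair $(\vec{E}_{\mathrm{e}},\vec{H}_{\mathrm{e}})$ with $\rot\vec{H}_{\mathrm{e}} = 0$, $\rot\vec{E}_{\mathrm{e}} = 0$, $\tanxtr\vec{H}_{\mathrm{e}} = 0$ on $\Gamma_{1}$ (forced by $k > 0$ and the impedance condition), $\tantr\vec{E}_{\mathrm{e}} = 0$ on $\partial\Omega$, plus the Gauß laws; these are finite-dimensional cohomology spaces because of the Lipschitz-domain compact embedding results.

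Next I would decompose $\mathcal{X} = \ker\mathcal{A} \oplus \ran\mathcal{A}$ (orthogonally, possible once we know $\ran\mathcal{A}$ is closed, which follows from the compact resolvent below), project the initial data, and set $(\vec{E}_{\mathrm{e}},\vec{H}_{\mathrm{e}})$ to be the $\ker\mathcal{A}$-component — note $\ker\mathcal{A}$ is invariant and pointwise fixed by the semigroup, so it really is the limiting equilibrium. On the complemented part, call the restricted generator $\mathcal{A}_{0}$, the task is to show $\mathrm{i}\R \cap \sigma(\mathcal{A}_{0}) = \emptyset$ and then quote the Batty--Duyckaerts / Borichev--Tomilov style result (from \cite{BaDu08}) that a bounded semigroup whose generator has no imaginary spectrum and a compact resolvent is semi-uniformly stable with a rate function $f$ depending only on the resolvent growth — hence independent of the initial data, giving exactly the quantified estimate with the graph-norm (i.e.\ $\Lp{2}$ plus $\rot$) on the right, since the domain norm of $\mathcal{A}_{0}$ is equivalent to $\norm{\cdot}_{\Lp{2}} + \norm{\rot\cdot}_{\Lp{2}}$ on the relevant space.

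So the proof boils down to two spectral facts about $\mathcal{A}_{0}$. For \emph{compact resolvent}: this is where the Lipschitz-domain functional analysis enters — the space $\hH(\rot,\Omega) \cap \epsilon^{-1}\Hspace(\div,\Omega)$ with mixed boundary conditions embeds compactly into $\Lp{2}(\Omega)^{3}$ (a Weck/Picard/Weber-type selection theorem, valid on strongly Lipschitz domains with split boundary), and the same for the $\vec{H}$-component with the roles of $\Gamma_{0},\Gamma_{1}$ swapped; combined with the a priori bound that solutions of the resolvent equation $(\mathrm{i}s - \mathcal{A}_{0})(\vec{E},\vec{H}) = (f,g)$ lie in these graph spaces, compactness of $(\mathrm{i}s-\mathcal{A}_{0})^{-1}$ follows. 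For \emph{no imaginary eigenvalues}: suppose $(\vec{E},\vec{H}) \in \dom\mathcal{A}_{0}$ solves $\mathcal{A}_{0}(\vec{E},\vec{H}) = \mathrm{i}s(\vec{E},\vec{H})$; testing with $(\vec{E},\vec{H})$ and taking real parts kills the boundary term, forcing $\tanxtr\vec{H} = 0$ on $\Gamma_{1}$ (using $k \geq c^{-1} > 0$), hence by the impedance condition also $\tantr\vec{E} = 0$ on $\Gamma_{1}$, so $(\vec{E},\vec{H})$ satisfies homogeneous tangential conditions on \emph{all} of $\partial\Omega$ and additionally has vanishing Cauchy data on $\Gamma_{1}$; the resulting overdetermined system $\rot\vec{H} = \mathrm{i}s\,\epsilon\vec{E}$, $\rot\vec{E} = -\mathrm{i}s\,\mu\vec{H}$ with both tangential traces zero on the open piece $\Gamma_{1}$ is exactly the setting of a unique continuation principle for anisotropic Maxwell (which requires the Lipschitz continuity of $\epsilon,\mu$ — cf.\ the remark in the introduction), yielding $(\vec{E},\vec{H}) = 0$ on $\Omega$ when $s \neq 0$; the case $s = 0$ is excluded because it lands in $\ker\mathcal{A}$, which was split off.

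The main obstacle I anticipate is \emph{not} the abstract machinery but the careful handling of the traces and function spaces on a mere Lipschitz domain: making the impedance boundary condition $\tantr\vec{E} + k\tanxtr\vec{H} = 0$ precise when $\tanxtr\vec{H}$ only lives in a dual trace space $\Vtaud$ and $\tantr\vec{E}$ in $\Vtau$, verifying that the feedback coupling still yields a well-defined $m$-dissipative operator, pinning down the right compact embedding theorem with split boundary conditions on a Lipschitz domain, and — most delicate — invoking a unique continuation result that is actually available for anisotropic (matrix, Lipschitz) coefficients with Cauchy data prescribed only on the relatively open subset $\Gamma_{1}$ rather than the whole boundary. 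These are precisely the points where the restriction to Lipschitz-$\epsilon,\mu$ and to semi-uniform (rather than exponential) stability is being paid for, and where I'd expect the bulk of the paper's technical work — presumably set up in \Cref{sec:background-diffoperator-and-traces} — to be consumed.
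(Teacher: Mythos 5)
Your overall strategy is the paper's: split off the stationary part, prove a compact resolvent on the constrained subspace, exclude imaginary eigenvalues by dissipativity plus unique continuation, and invoke the Batty--Duyckaerts result. But two steps, as you have written them, do not go through. First, the justification of the splitting is circular: you propose to decompose $\mathcal{X}=\ker\mathcal{A}\oplus\ran\mathcal{A}$ ``once we know $\ran\mathcal{A}$ is closed, which follows from the compact resolvent below.'' The \emph{full} Maxwell operator on $\Lp{2}(\Omega)\times\Lp{2}(\Omega)$ does not have a compact resolvent: its domain controls only $\rot$ (plus $\Lp{2}$ boundary traces), and $\Hspace(\rot,\Omega)$ does not embed compactly into $\Lp{2}(\Omega)$; for $s\neq 0$ the resolvent equation only yields $\div\epsilon\vec{E}=(\iu s)^{-1}\div\epsilon f$, so no $\Hspace(\div)$ information is gained for generic data. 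Compactness is available only \emph{after} restricting to the divergence-constrained, cohomology-orthogonal subspace --- which is exactly why the paper first constructs $\XH$ explicitly via weighted Helmholtz decompositions (\Cref{th:helmholtz-decomposition}, \Cref{th:div0-cap-cohomology}), identifies it with $\rot\Hspace(\rot,\Omega)\times\rot\cH_{\Gamma_{0}}(\rot,\Omega)$, checks semigroup invariance, and only then proves \Cref{th:A-has-compact-resolvent} from the split-boundary compact embedding (\Cref{th:compact-embedding-for-div-rot-space}). Relatedly, your claim that $\ker\mathcal{A}$ consists of ``finite-dimensional cohomology spaces'' is false in your state space: without imposing the Gau{\ss} laws, the kernel contains all curl-free fields with the stated trace conditions (e.g.\ gradients of compactly supported functions) and is infinite dimensional. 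The decomposition itself can be repaired (mean ergodic decomposition for the bounded semigroup, or a direct integration-by-parts computation showing $\ker\mathcal{A}\perp_{\hamiltonian}\ran\mathcal{A}$, or the paper's explicit route), but not by the argument you give, and the directness of the sum is also what you implicitly use to dispose of the eigenvalue $0$ (the paper proves $0\in\uprho(A)$ separately, \Cref{th:0-in-resolvent-set}).

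Second, the unique continuation step is quoted in a form that is not what the cited tools provide: the available principle (a three-ball estimate, hence \Cref{th:principle-of-unique-continuation}) is an \emph{interior} UCP --- vanishing on an open subset of $\Omega$ forces vanishing everywhere. To exploit vanishing tangential Cauchy data only on the relatively open boundary piece $\Gamma_{1}$ you still need the short but essential reduction: extend $(\vec{E},\vec{H})$ by zero across a boundary ball centered in $\Gamma_{1}$, verify (using precisely the vanishing tangential traces) that the extensions remain in $\Hspace(\rot,\hat{\Omega})$ and still solve the eigenvalue system on the enlarged domain, and then apply the interior UCP there; this is the content of the paper's appendix. You flag this point as delicate, correctly, but as stated your proposal treats the boundary version as an off-the-shelf theorem, which leaves a genuine gap. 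Beyond these two points (and the harmless confusion about ``shifting by the $\rho$-part,'' which is unnecessary since $\rho$ enters only as a constraint preserved by the flow), your choice of equilibrium as the kernel projection, rather than the paper's solution of the static system with boundary datum $h$ adjusted by cohomology, is a legitimate variant and yields the same final estimate, since the kernel component is curl-free and the graph norm reduces to $\Lp{2}$ plus the curls of the initial data.
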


\begin{remark}
  Clearly, we can replace the $\Lp{2}$ norm in the previous theorem by a weighted $\Lp{2}$ norm, e.g., the energy norm that is induced by $\epsilon$ and $\mu$.
\end{remark}

\begin{remark}
  Note that the connectedness of $\Omega$ is not really necessary as long as $\Gamma_{1}$ has parts on every connected component.
\end{remark}

\begin{remark}
  We can actually also allow inhomogeneous boundary conditions for $\tantr \vec{E}$ and $\normaltr \vec{H}$ on $\Gamma_{0}$, because they will disappear in the equilibrium. We only have to make sure that the inhomogeneity satisfies certain compatibility conditions (not every $\Lp{2}$ function is in the range of $\tantr$).
\end{remark}




This work is structured in the following way: We will start by recalling the Sobolev spaces that correspond to our differential operators in \Cref{sec:preliminary}. Then we will split the system into a static and a dynamic part in \Cref{sec:split-the-system}. In \Cref{sec:semi-uniform-stability} we show that the dynamic part is semi-uniformly stable by a compact embedding and a unique continuation principle, which finally implies the main result \Cref{th:main-theorem}.

\section{Preliminary}\label{sec:preliminary}
For $\Omega \subseteq \R^{d}$ open and $\Gamma \subseteq \partial\Omega$ open we use the following notation (as in \cite{BaPaScho16})
\begin{align*}
  \Cc(\Omega) &\coloneqq \dset[\big]{f \in \conC^{\infty}(\R^{d})}{\supp f \subseteq \Omega \ \text{is compact}} \\
  \Cc_{\Gamma}(\Omega) &\coloneqq \dset[\Big]{f \big\vert_{\Omega}}{f \in \Cc(\R^{d}), \dist(\Gamma,\supp f) > 0}.
\end{align*}
We will regard an open, bounded and connected $\Omega \subseteq \R^{3}$ with strongly Lipschitz boundary.
For $g \in \Cc(\R^{3})$ and $f \in \Cc(\R^{3})^{3}$ we define the differential operators
\begin{align*}
  \grad g =
  \begin{bmatrix}
    \partial_{1}g \\ \partial_{2}g \\ \partial_{3}g
  \end{bmatrix},
  \quad
  \div f = \partial_{1} f_{1} + \partial_{2} f_{2} + \partial_{3} f_{3}
  \quad\text{and}\quad
  \rot f =
  \begin{bmatrix}
    \partial_{2} f_{3} - \partial_{3} f_{2} \\
    \partial_{3} f_{1} - \partial_{1} f_{3} \\
    \partial_{1} f_{2} - \partial_{2} f_{1}
  \end{bmatrix}.
\end{align*}
These operators can be regarded as operators from $\Lp{2}(\Omega)^{k_{1}}$ to $\Lp{2}(\Omega)^{k_{2}}$ (with $k_{1},k_{2} \in \N$ suitable). In the further we will omit the exponent $k$ at $\Lp{2}(\Omega)^{k}$, $\Hspace^{1}(\Omega)^{k}$, $\Cc(\Omega)^{k}$, etc., if they are clear from the context.
We introduce the maximal domain of these operators on $\Lp{2}(\Omega)$:
\begin{align*}
  \Hspace^{1}(\Omega) &= \dset{g \in \Lp{2}(\Omega)}{\grad g \in \Lp{2}(\Omega)^{3}}, \\
  \Hspace(\div,\Omega) &= \dset{f \in \Lp{2}(\Omega)^{3}}{\div f \in \Lp{2}(\Omega)}, \\
  \mathllap{\text{and}\quad}\Hspace(\rot,\Omega) &= \dset{f \in \Lp{2}(\Omega)^{3}}{\rot f \in \Lp{2}(\Omega)^{3}}, \mathrlap{\quad\text{respectively.}}
\end{align*}
For $g \in \Cc(\R^{3})$ and $f \in \Cc(\R^{3})^{3}$ there is the well-known integration by parts formula
\begin{align*}
  \scprod{\div f}{g}_{\Lp{2}(\Omega)} + \scprod{f}{\grad g}_{\Lp{2}(\Omega)} = \scprod*{\nu \cdot f\big\vert_{\partial\Omega}}{g\big\vert_{\partial\Omega}}_{\Lp{2}(\partial\Omega)},
\end{align*}
where $\nu$ denotes the outward pointing unit normal vector on the boundary.
This formula can be extended to the maximal domain of the respective differential operator, such that we have for $g \in \Hspace^{1}(\Omega)$ and $f \in \Hspace(\div,\Omega)$
\begin{align*}
  \scprod{\div f}{g}_{\Lp{2}(\Omega)} + \scprod{f}{\grad g}_{\Lp{2}(\Omega)} = \dualprod*{\normaltr f}{\boundtr g}_{\Hspace^{-\nicefrac{1}{2}}(\partial\Omega),\Hspace^{\nicefrac{1}{2}}(\partial\Omega)},
\end{align*}
where $\boundtr$, the boundary trace, is the extension of $g \mapsto g\big\vert_{\partial\Omega}$ to $\Hspace^{1}(\Omega)$ and $\normaltr$, the normal trace, is the extension of $f \mapsto \nu \cdot f\big\vert_{\partial\Omega}$ to $\Hspace(\div,\Omega)$. These mappings map onto $\Hspace^{\nicefrac{1}{2}}(\partial\Omega)$ and $\Hspace^{-\nicefrac{1}{2}}(\partial\Omega)$ respectively, where $\Hspace^{\nicefrac{1}{2}}(\partial\Omega)$ is range of $\boundtr$ endowed with the range norm of $\Hspace^{1}(\Omega)$, and $\Hspace^{-\nicefrac{1}{2}}(\partial\Omega)$ is its dual space.

Basically, for $\rot$ there is a similar approach to extend the integration by parts formula
\begin{align*}
  \scprod{\rot f}{g}_{\Lp{2}(\Omega)} + \scprod{f}{-\rot g}_{\Lp{2}(\Omega)} = \scprod{\nu \times f}{(\nu \times g) \times \nu}_{\Lp{2}(\partial\Omega)}
\end{align*}
that is valid for $f, g \in \Cc(\R^{3})$ to the maximal domain of $\rot$. We present this in \Cref{sec:background-diffoperator-and-traces}.

Note that every $w \in \C^{3}$ can be represented as
\begin{equation*}
  w = \underbrace{(\nu(\zeta) \times w) \times \nu(\zeta)}_{\text{tangential part}}\mathclose{} + \mathopen{}\underbrace{(\nu(\zeta) \cdot w) \nu(\zeta)}_{\text{normal part}}
  \quad\text{for a.e.}\quad \zeta \in \partial\Omega.
\end{equation*}
Hence, we call $\tantr$, the extension of $g \mapsto (\nu \times g \big\vert_{\partial\Omega}) \times \nu$, the \emph{tangential trace} and $\tanxtr$, the extension of $f \mapsto \nu \times g \big\vert_{\partial\Omega}$, the \emph{twisted tangential trace}. If we want to emphasize that we are only interested on the part $\Gamma_{1} \subseteq \partial\Omega$ of the tangential trace we denote this by $\tantr[\Gamma_{1}]$ and $\tanxtr[\Gamma_{1}]$ for details see \Cref{sec:background-diffoperator-and-traces}.


For two Hilbert space $X$ and $Y$ we will use the notation
\begin{align*}
  \begin{bmatrix}
    X \\ Y
  \end{bmatrix}
  \coloneqq
  X \times Y.
\end{align*}
For a strictly positive and bounded operator $P$ on $\Lp{2}(\Omega)$ we introduce
\begin{align}
  \label{eq:def-inner-product-with-operator}
  \scprod{x}{y}_{P} \coloneqq \scprod{x}{Py}_{\Lp{2}(\Omega)} = \scprod{Px}{y}_{\Lp{2}(\Omega)},
\end{align}
which establishes an equivalent inner product on $\Lp{2}(\Omega)$. Corresponding to this new inner product we have $\perp_{P}$ and $\oplus_{P}$.

For a strongly Lipschitz domain $\Omega$ and $\Gamma_{0} \subseteq \partial\Omega$ we say that the pair $(\Omega,\Gamma_{0})$ is a strongly Lipschitz pair, roughly speaking, if also $\partial \Gamma_{0}$ is strongly Lipschitz. For details see \cite{BaPaScho16,SkPa23}.



Additionally to the restricted tangential traces ($\tantr[\Gamma_{1}]$ and $\tanxtr[\Gamma_{1}]$), we introduce the extension of $g \mapsto \nu \cdot g \big\vert_{\Gamma}$ to $\Hspace(\div,\Omega)$ denoted by $\normaltr[\Gamma] g$.

Furthermore, we define
\begin{align*}
  \cH_{\Gamma}(\rot, \Omega) &\coloneqq \dset{f \in \Lp{2}(\Omega)}{\rot f \in \Lp{2}(\Omega), \tantr[\Gamma] f = 0} \\
  \hH_{\Gamma}(\rot, \Omega) &\coloneqq \dset{f \in \Lp{2}(\Omega)}{\rot f \in \Lp{2}(\Omega), \tantr[\Gamma] f \in \Lp{2}(\Gamma)}
\end{align*}
and
\begin{align*}
  \cH_{\Gamma}(\div, \Omega) &\coloneqq \dset{f \in \Lp{2}(\Omega)}{\div f \in \Lp{2}(\Omega), \normaltr[\Gamma] f = 0} \\
  \hH_{\Gamma}(\div, \Omega) &\coloneqq \dset{f \in \Lp{2}(\Omega)}{\div f \in \Lp{2}(\Omega), \normaltr[\Gamma] f \in \Lp{2}(\Gamma)},
\end{align*}
see e.g., \cite{PaSk21}.
Similar to \cite{DaLi90} we denote the spaces with vanishing $\rot$ and $\div$, respectively, by
\begin{align*}
  \renewcommand{\quad}{\mspace{30mu}}
  \begin{aligned}[t]
    \Hspace(\rot 0,\Omega) &\coloneqq \ker \rot, \\
    \cH_{\Gamma}(\rot 0, \Omega) &\coloneqq \cH_{\Gamma}(\rot, \Omega) \cap \ker \rot, \\
    \hH_{\Gamma}(\rot 0, \Omega) &\coloneqq \hH_{\Gamma}(\rot, \Omega) \cap \ker \rot,
  \end{aligned}
                                   \quad
                                   \begin{aligned}[t]
                                     \Hspace(\div 0,\Omega) &\coloneqq \ker \div, \\
                                     \cH_{\Gamma}(\div 0, \Omega) &\coloneqq \cH_{\Gamma}(\div, \Omega) \cap \ker \div, \\
                                     \hH_{\Gamma}(\div 0, \Omega) &\coloneqq \hH_{\Gamma}(\div, \Omega) \cap \ker \div.
                                   \end{aligned}
\end{align*}
Moreover, we define the cohomology groups for $\delta \in \Lb(\Lp{2}(\Omega))$ strictly positive by
\begin{align*}
  \cohom_{\Gamma_{a},\Gamma_{b},\delta}(\Omega)
  &\coloneqq \cH_{\Gamma_{a}}(\rot 0, \Omega) \cap \delta^{-1} \cH_{\Gamma_{b}}(\div 0,\Omega)
  \\
  \cohom_{\delta,\Gamma_{a},\Gamma_{b}}(\Omega)
  & \coloneqq \delta \cohom_{\Gamma_{a},\Gamma_{b},\delta}(\Omega)
  \\
  &\phantom{\mathclose{} \coloneqq \mathopen{}}\mathllap{\mathclose{}=\mathopen{}} \delta \cH_{\Gamma_{a}}(\rot 0, \Omega) \cap \cH_{\Gamma_{b}}(\div 0,\Omega).
\end{align*}

\section{Split the system}\label{sec:split-the-system}
In this section we will split our system in a time invariant part and the remaining dynamic part. This will simplify the analysis of the spectrum of the (remaining) dynamic part. From a certain point of view we factor out the eigenvectors corresponding to the eigenvalue $0$ of the differential operator that describes the dynamic.

We can write the system~\eqref{eq:maxwells-eq-dynamic-1}-\eqref{eq:maxwells-eq-dynamic-2} as
\begin{align*}
  \frac{\partial}{\partial t}
  \begin{bmatrix}
    \vec{D} \\ \vec{B}
  \end{bmatrix}
  =
  \begin{bmatrix}
    0 & \rot \\
    - \rot & 0
  \end{bmatrix}
             \begin{bmatrix}
               \epsilon^{-1} & 0 \\
               0 & \mu^{-1}
             \end{bmatrix}
                   \begin{bmatrix}
                     \vec{D} \\ \vec{B}
                   \end{bmatrix}
\end{align*}
or as
\begin{align*}
  \frac{\partial}{\partial t}
  \begin{bmatrix}
    \epsilon & 0 \\
    0 & \mu
  \end{bmatrix}
  \begin{bmatrix}
    \vec{E} \\ \vec{H}
  \end{bmatrix}
  =
  \begin{bmatrix}
    0 & \rot \\
    - \rot & 0
  \end{bmatrix}
             \begin{bmatrix}
               \vec{E} \\ \vec{H}
             \end{bmatrix}
\end{align*}
From a semigroup perspective the first version is in a better form.
The second version is for instance favored by the approach in \cite{SeTrWa22}.

In order to analyze the stability of the system we have to separate the equilibrium states from the rest of the system.
We call the remaining system the ``true dynamic''.

By setting all time derivatives to zero in~\eqref{eq:maxwells-eq}, we obtain the equations for the equilibrium states.
\begin{subequations}
  \label{eq:maxwell-static}
  \begin{align}
    \rot \vec{E} &= 0, & \rot \vec{H} &= 0, \\
    \div \epsilon \vec{E} &= \rho, & \div \mu \vec{H} &= 0, \\
    \tantr[\Gamma_{0}] \vec{E} &= 0, & \normaltr[\Gamma_{0}] \mu \vec{H} &= 0, \\
    \tantr[\Gamma_{1}] \vec{E} &= -k h, & \tanxtr[\Gamma_{1}] \vec{H} &= h,
  \end{align}
\end{subequations}
where $h$ is determined by the traces of the initial values. This static system is solvable by~\cite[Thm.~5.6]{BaPaScho16}.
Note that if $\Omega$ is not simply connected, then the cohomology spaces
\begin{align*}
  \cohom_{\partial\Omega,\emptyset,\epsilon}(\Omega) \quad\text{and}\quad \cohom_{\Gamma_{1},\Gamma_{0},\mu}(\Omega)
\end{align*}
contain more than the zero vector. Hence, the solutions of \eqref{eq:maxwell-static} are not unique,
because for a solution
\(
\begin{bsmallmatrix}
  \vec{E}_{\mathrm{e}} \\ \vec{H}_{\mathrm{e}}
\end{bsmallmatrix}
\)
also
\(
\begin{bsmallmatrix}
  \vec{E}_{\mathrm{e}} \\ \vec{H}_{\mathrm{e}}
\end{bsmallmatrix}
+
\begin{bsmallmatrix}
  \tilde{\vec{E}} \\ \tilde{\vec{H}}
\end{bsmallmatrix}
\)
solves \eqref{eq:maxwell-static}, if
\(
\begin{bsmallmatrix}
  \tilde{\vec{E}} \\ \tilde{\vec{H}}
\end{bsmallmatrix}
\in
\begin{bsmallmatrix}
  \cohom_{\partial\Omega,\emptyset,\epsilon}(\Omega) \\ \cohom_{\Gamma_{1},\Gamma_{0},\mu}(\Omega)
\end{bsmallmatrix}
\).

For the ``true'' dynamic we regard the operator
\[
\begin{bmatrix}
  0 & \rot \\
  -\rot & 0
\end{bmatrix}
\begin{bmatrix}
  \epsilon^{-1} & 0 \\
  0 & \mu^{-1}
\end{bmatrix}
\]
with the boundary conditions
\[
  \tantr[\Gamma_{0}] \epsilon^{-1} \vec{D} = 0 \quad\text{and}\quad \tantr[\Gamma_{1}] \epsilon^{-1} \vec{D} + k \tanxtr[\Gamma_{1}] \mu^{-1} \vec{B} = 0.
\]
Note that in general $\tantr[\Gamma_{1}]$ and $\tanxtr[\Gamma_{1}]$ map into different spaces. Hence, in order to meaningfully regard the second boundary condition we restrict ourselves to all those elements that are mapped into the pivot space $\Lp{2}_{\tau}(\Gamma_{1})$ under $\tantr[\Gamma_{1}]$, $\tanxtr[\Gamma_{1}]$.\footnote{One can also use quasi Gelfand triple theory to regard the boundary condition in a larger space (that contains $\Vtau(\Gamma_{1})$ and $\Vtaud(\Gamma_{1})$), but in the end this only implies that all vector fields $\vec{D}$, $\vec{B}$ whose tangential traces satisfy this condition in the larger space are already in the pivot space $\Lp{2}_{\tau}(\Gamma_{1})$.}
Combined with the boundary condition $\tantr[\Gamma_{0}] \epsilon^{-1} \vec{D} = 0$ we can say that $\epsilon^{-1} \vec{D}$ has an $\Lp{2}$ tangential trace on the entire boundary.
Hence, in order to satisfy our boundary conditions we require/assume that
\[
\vec{E} = \epsilon^{-1}\vec{D} \in \hH_{\partial\Omega}(\rot,\Omega) \cap \cH_{\Gamma_{0}}(\rot,\Omega) \quad\text{and}\quad \vec{H} = \mu^{-1}\vec{B} \in \hH_{\Gamma_{1}}(\rot,\Omega).
\]
Summarized we regard the following operator and domain
\begingroup 
\thinmuskip=2mu%
\medmuskip=3mu%
\thickmuskip=4mu plus 2mu%
\begin{align}
  \label{eq:def-A-0}
  \begin{split}
    A_{0}
    &=
      \begin{bmatrix}
        0 & \rot \\
        -\rot & 0
      \end{bmatrix}
                \overbrace{
                \begin{bmatrix}
                  \epsilon^{-1} & 0 \\
                  0 & \mu^{-1}
                \end{bmatrix}}^{\eqqcolon\mathrlap{\hamiltonian}}
    \\[0.5em]
    \dom A_{0}
    &= \left\{
      \begin{bmatrix}
        \vec{D} \\ \vec{B}
      \end{bmatrix}
    \in
    \hamiltonian^{-1}
    \begin{bmatrix}
      \hH_{\partial\Omega}(\rot,\Omega) \cap \cH_{\Gamma_{0}}(\rot,\Omega)\\ \hH_{\Gamma_{1}}(\rot,\Omega)
    \end{bmatrix}
      \right. \\
    &\pushright{
      \,\left|\,
      \vphantom{\begin{bmatrix} \vec{D} \\ \vec{B}\end{bmatrix}}
      \tantr[\Gamma_{1}] \epsilon^{-1} \vec{D} + k \tanxtr[\Gamma_{1}] \mu^{-1} \vec{B} = 0
      \right\}\mathrlap{\phantom{.}}
      }
    \\[0.5em]
    &= \hamiltonian^{-1}
      \dset*{
      \begin{bmatrix}
        \vec{E} \\ \vec{H}
      \end{bmatrix}
    \in
    \begin{bmatrix}
      \hH_{\partial\Omega}(\rot,\Omega) \cap \cH_{\Gamma_{0}}(\rot,\Omega)\\ \hH_{\Gamma_{1}}(\rot,\Omega)
    \end{bmatrix}
    }{\tantr[\Gamma_{1}] \vec{E} + k \tanxtr[\Gamma_{1}] \vec{H} = 0}\mathrlap{.}
  \end{split}
\end{align}
\endgroup
The operator $A_{0}$ is a generator of a contraction semigroup as we will explain in \Cref{sec:generator}. 
Note that $\hamiltonian \colon \Lp{2}(\Omega) \to \Lp{2}(\Omega)$ is a strictly positive and bounded operator, by the assumptions on $\epsilon$ and $\mu$.
It comes very natural to use $\scprod{\cdot}{\cdot}_{\hamiltonian}$ as an inner product, since the corresponding norm is the energy norm in the state space.

In order to satisfy the remaining conditions we define the state space
\begin{align*}
  \XH \coloneqq \dset*{
  \begin{bsmallmatrix}
    \vec{D} \\ \vec{B}
  \end{bsmallmatrix}
  }{\div \vec{D} = 0, \div \vec{B} = 0, \normaltr[\Gamma_{0}] \vec{B} = 0}
  \cap
  \begin{bmatrix}
    \cohom_{\epsilon,\partial\Omega,\emptyset}(\Omega) \\ \cohom_{\mu,\Gamma_{1},\Gamma_{0}}(\Omega)
  \end{bmatrix}^{\perp_{\hamiltonian}}.
\end{align*}
Clearly, $\XH \subseteq \Lp{2}(\Omega)$. Note that
\begin{align*}
  \begin{bmatrix}
    \cohom_{\epsilon,\partial\Omega,\emptyset}(\Omega) \\ \cohom_{\mu,\Gamma_{1},\Gamma_{0}}(\Omega)
  \end{bmatrix}^{\perp_{\hamiltonian}}
  =
  \begin{bmatrix}
    \cohom_{\epsilon,\partial\Omega,\emptyset}(\Omega)^{\perp_{\epsilon^{-1}}} \\ \cohom_{\mu,\Gamma_{1},\Gamma_{0}}(\Omega)^{\perp_{\mu^{-1}}}
  \end{bmatrix}.
\end{align*}
The intersection with $\big(\cohom_{\epsilon,\partial\Omega,\emptyset}(\Omega) \times \cohom_{\mu,\Gamma_{1},\Gamma_{0}}(\Omega)\big)^{\perp_{\hamiltonian}}$ factors out all solutions in $\cohom_{\epsilon,\partial\Omega,\emptyset}(\Omega) \times \cohom_{\mu,\Gamma_{1},\Gamma_{0}}(\Omega)$, because these are static solutions and already included in the static solutions (as difference of two solutions of \eqref{eq:maxwell-static}).

\begin{lemma}\label{le:decomposition-of-Hilbert-space-deformed-by-postive-operatore}
  Let $H$ be a Hilbert space that can be decomposed into $H = U_{1} \oplus U_{2}$, where $U_{1}$ and $U_{2}$ are closed subspaces of $H$. Then for every strictly positive operator $Q \in \Lb(H)$ we can decompose $H$ into
  \begin{align*}
    H = U_{1} \oplus U_{2} = U_{1} \oplus_{Q} Q^{-1} U_{2} = U_{1} \oplus_{Q^{-1}} Q U_{2},
  \end{align*}
  where $\oplus_{Q}$ denotes the orthogonal sum w.r.t.\ the inner product $\scprod{x}{y}_{Q} \coloneqq \scprod{x}{Qy}$.
\end{lemma}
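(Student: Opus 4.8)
The plan is to prove the first identity $H = U_{1} \oplus_{Q} Q^{-1}U_{2}$ in detail; the third identity then follows by applying it with $Q$ replaced by the (again strictly positive, bounded) operator $Q^{-1}$, and the middle identity $H = U_{1} \oplus U_{2}$ is just the hypothesis. Throughout I use that strict positivity of $Q \in \Lb(H)$ means $\scprod{x}{Qx} \geq c\norm{x}^{2}$ for some $c>0$, so that $Q^{-1} \in \Lb(H)$ exists and is itself strictly positive; in particular $Q^{-1}U_{2}$ is a closed subspace of $H$. Let $P_{1}, P_{2} \in \Lb(H)$ denote the (standard) orthogonal projections onto $U_{1}$ and $U_{2}$, so $P_{1} + P_{2} = \idop$.

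First I would verify $\scprod{\cdot}{\cdot}_{Q}$-orthogonality of the two summands: for $u_{1} \in U_{1}$ and $u_{2} \in U_{2}$ one has $\scprod{u_{1}}{Q^{-1}u_{2}}_{Q} = \scprod{u_{1}}{QQ^{-1}u_{2}} = \scprod{u_{1}}{u_{2}} = 0$, since $U_{1} \perp U_{2}$. As $\scprod{\cdot}{\cdot}_{Q}$ is a genuine (equivalent) inner product, this already forces $U_{1} \cap Q^{-1}U_{2} = \set{0}$, so it remains only to show that the two subspaces together span $H$.

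For spanning, fix $x \in H$. I want $u_{1} \in U_{1}$ with $x - u_{1} \in Q^{-1}U_{2}$, i.e.\ with $Q(x-u_{1}) \in U_{2}$, i.e.\ with $P_{1}Q(x-u_{1}) = 0$. This is the equation $Tu_{1} = P_{1}Qx$ for the operator $T \coloneqq P_{1}Q\big\vert_{U_{1}} \colon U_{1} \to U_{1}$. Now $T$ is bounded, and for $u, u' \in U_{1}$ one has $\scprod{Tu}{u'} = \scprod{P_{1}Qu}{u'} = \scprod{Qu}{P_{1}u'} = \scprod{Qu}{u'}$, which is symmetric in $u$ and $u'$ (as $Q$ is self-adjoint) and satisfies $\scprod{Tu}{u} = \scprod{Qu}{u} \geq c\norm{u}^{2}$; hence $T$ is self-adjoint, coercive, and therefore boundedly invertible on $U_{1}$ (Lax--Milgram, or the spectral theorem). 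Setting $u_{1} \coloneqq T^{-1}P_{1}Qx \in U_{1}$ and $v \coloneqq x - u_{1}$ we obtain $P_{1}Qv = P_{1}Qx - Tu_{1} = 0$, so $Qv \in U_{2}$, i.e.\ $v \in Q^{-1}U_{2}$, and $x = u_{1} + v$ is the desired decomposition.

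This completes the first identity, and as noted the third follows by the symmetry $Q \leftrightarrow Q^{-1}$. The only real content is the invertibility of $T = P_{1}Q\big\vert_{U_{1}}$, and I expect this to be the ``main obstacle'' only in a bookkeeping sense: it is immediate from coercivity of $Q$ together with $P_{1}$ being an orthogonal projection, and no compactness, spectral theory of unbounded operators, or geometry of $\Omega$ enters.
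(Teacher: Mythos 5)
Your proof is correct, and it takes a genuinely different route from the paper. The paper argues non-constructively: after the same easy check that $U_{1} \perp_{Q} Q^{-1}U_{2}$, it notes that the $Q$-orthogonal sum of the two closed subspaces is closed, assumes it were a proper subspace of $H$, picks $h \neq 0$ orthogonal to it, writes $h = u_{1} + u_{2}$ with $u_{i} \in U_{i}$ using the hypothesis, and then tests against $w_{1} = u_{1}$ and $Q^{-1}u_{2}$ to get first $u_{1} = 0$ and then, via coercivity of $Q^{-1}$ in $0 = \scprod{u_{2}}{Q^{-1}u_{2}} \geq C\norm{u_{2}}^{2}$, that $h = 0$. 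You instead construct the decomposition explicitly: solving $P_{1}Q(x - u_{1}) = 0$ amounts to inverting $T = P_{1}Q\big\vert_{U_{1}}$, which is self-adjoint and coercive on $U_{1}$, so Lax--Milgram applies; this even exhibits the (in the original inner product oblique, but bounded) projection $x \mapsto T^{-1}P_{1}Qx$ onto $U_{1}$ along $Q^{-1}U_{2}$, and closedness of $Q^{-1}U_{2}$ or of the sum is never needed. The paper's argument is shorter; yours is constructive and gives quantitative control of the projections. Both correctly use that $\oplus$ in the hypothesis is the orthogonal sum (so $U_{2} = U_{1}^{\perp}$, which your reduction of $Q(x-u_{1}) \in U_{2}$ to $P_{1}Q(x-u_{1}) = 0$ relies on), and your reduction of the third identity to the first by replacing $Q$ with $Q^{-1}$ is fine.
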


\begin{proof}
  It is straightforward to show that $U_{1} \perp_{Q} Q^{-1}U_{2}$. Hence, it is left to show that $H$ is the direct sum of $U_{1}$ and $Q^{-1}U_{2}$. Note that $U_{1}$ and $Q^{-1}U_{2}$ are both closed. Therefore, the orthogonal projections (w.r.t.\ $\scprod{\cdot}{\cdot}_{Q}$) $P_{1}$ and $P_{2}$ on $U_{1}$ and $Q^{-1}U_{2}$, respectively are continuous and $U_{1} \oplus_{Q} Q^{-1}U_{2}$ is closed.

  Let us assume that $U_{1} \oplus_{Q} Q^{-1}U_{2}$ does not already equal $H$. Then there exists an $h \in H$ such that
  $h \perp w_{1} + Q^{-1}w_{2}$ for all $w_{1} \in U_{1}$ and $w_{2} \in U_{2}$. Note that by assumption $h = u_{1} + u_{2}$ for $u_{1} \in U_{1}$ and $u_{2} \in U_{2}$. Hence, for $w_{1} = u_{1}$ and $w_{2} = 0$ we obtain
  \begin{align*}
    0 = \scprod{h}{u_{1}} = \norm{u_{1}}^{2}
  \end{align*}
  and therefore $h = u_{2}$. For $w_{1} = 0$ and $w_{2} = u_{2}$ we have
  \begin{align*}
    0 = \scprod{h}{Q^{-1} u_{2}} = \scprod{u_{2}}{Q^{-1} u_{2}} \geq C \norm{u_{2}}^{2},
  \end{align*}
  which implies $h = 0$.
\end{proof}

The next theorem is \cite[Thm.~5.3]{BaPaScho16}, we have just made some minor modifications to better suit our setting.

\begin{restatable}[Helmholtz decomposition]{theorem}{helmholtzdecomposition}\label{th:helmholtz-decomposition}
  Let $(\Omega, \Gamma_{a})$ be a strongly Lipschitz pair and $\Gamma_{b} = \partial\Omega \setminus \cl{\Gamma_{a}}$, and $\delta \in \Lb(\Lp{2}(\Omega))$ a strictly positive operator. Then
  \begin{align*}
    \Lp{2}(\Omega) =
    \delta \grad \cH^{1}_{\Gamma_{a}}(\Omega)
    \oplus_{\delta^{-1}}
    \cohom_{\delta,\Gamma_{a},\Gamma_{b}}(\Omega)
    \oplus_{\delta^{-1}}
    \rot \cH_{\Gamma_{b}}(\rot,\Omega)
  \end{align*}
\end{restatable}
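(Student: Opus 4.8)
The plan is to derive this Helmholtz-type decomposition from the classical $\Lp{2}$-orthogonal Helmholtz decomposition (the case $\delta = \idop$, which is \cite[Thm.~5.3]{BaPaScho16} in its original form) by transporting it through the positive operator $\delta$ using \Cref{le:decomposition-of-Hilbert-space-deformed-by-postive-operatore}. First I would recall the unweighted decomposition: for a strongly Lipschitz pair $(\Omega,\Gamma_{a})$ with $\Gamma_{b} = \partial\Omega \setminus \cl{\Gamma_{a}}$ one has the $\Lp{2}(\Omega)$-orthogonal decomposition
\begin{align*}
  \Lp{2}(\Omega) = \grad \cH^{1}_{\Gamma_{a}}(\Omega) \oplus \cohom_{\Gamma_{a},\Gamma_{b},\idop}(\Omega) \oplus \rot \cH_{\Gamma_{b}}(\rot,\Omega),
\end{align*}
where the middle summand is $\cH_{\Gamma_{a}}(\rot 0,\Omega) \cap \cH_{\Gamma_{b}}(\div 0,\Omega)$. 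I would want to check (or cite) that in this reference the three summands are mutually $\Lp{2}$-orthogonal and closed, so that the hypotheses of \Cref{le:decomposition-of-Hilbert-space-deformed-by-postive-operatore} are met.

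Next, the key step: I would apply \Cref{le:decomposition-of-Hilbert-space-deformed-by-postive-operatore} with $H = \Lp{2}(\Omega)$, $Q = \delta$, grouping the decomposition as needed. Concretely, writing $U_{1} = \grad \cH^{1}_{\Gamma_{a}}(\Omega)$ and noting that $\rot \cH_{\Gamma_{b}}(\rot,\Omega)$ is already in the desired form (it is $\delta$-independent on the right-hand side of the claim), the real content is to identify $\delta \bigl(\cH_{\Gamma_{a}}(\rot 0,\Omega) \cap \cH_{\Gamma_{b}}(\div 0,\Omega)\bigr)$ with $\cohom_{\delta,\Gamma_{a},\Gamma_{b}}(\Omega)$, which is precisely the definition $\cohom_{\delta,\Gamma_{a},\Gamma_{b}}(\Omega) = \delta\,\cH_{\Gamma_{a}}(\rot 0,\Omega) \cap \cH_{\Gamma_{b}}(\div 0,\Omega) = \delta\bigl(\cH_{\Gamma_{a}}(\rot 0,\Omega) \cap \delta^{-1}\cH_{\Gamma_{b}}(\div 0,\Omega)\bigr)$ given in \Cref{sec:preliminary}; a small bookkeeping check reconciles these two expressions and shows the middle term appears correctly. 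The lemma then converts $A \oplus B \oplus C$ (with the $\Lp{2}$ inner product) into $A \oplus_{\delta^{-1}} \delta B \oplus_{\delta^{-1}} C$ by applying the $\oplus_{Q} \leftrightarrow \oplus$ correspondence to the subspace sum; since $\grad \cH^{1}_{\Gamma_{a}}(\Omega)$ and $\rot \cH_{\Gamma_{b}}(\rot,\Omega)$ are left untouched (they sit in the "$U_{1}$" slot of two separate applications of the lemma), only the middle block is multiplied by $\delta$. One must be slightly careful to apply the lemma twice, or equivalently to a three-fold decomposition, to get the weights $\delta^{-1}$ on both sums simultaneously; this is routine once one observes that $U \perp_{\delta^{-1}} V \iff \delta U \perp_{\delta^{-1}} \delta V \cdot$ holds appropriately and that $\delta \grad \cH^{1}_{\Gamma_{a}}(\Omega)$ versus $\grad \cH^{1}_{\Gamma_{a}}(\Omega)$ must be matched to the stated form — here the stated form has $\delta \grad \cH^{1}_{\Gamma_{a}}(\Omega)$, so in fact the gradient block IS multiplied by $\delta$ and the curl block is not, which means the decomposition is $\delta U_{1} \oplus_{\delta^{-1}} \delta U_{2} \oplus_{\delta^{-1}} U_{3}$ where the unweighted one is $U_{1} \oplus U_{2} \oplus U_{3}$; applying the lemma to the pair $(U_{3}, U_{1}\oplus U_{2})$ with $Q = \delta^{-1}$ gives $U_{3} \oplus_{\delta^{-1}} \delta(U_{1}\oplus U_{2})$, and a second application inside $\delta(U_1 \oplus U_2)$, which equals $\delta U_1 \oplus_{\delta^{-1}} \delta U_2$ since $\delta$ is an isometry from $(\Lp2,\scprod\cdot\cdot)$ onto $(\Lp2,\scprod\cdot\cdot_{\delta^{-1}})$ — wait, more precisely $\delta$ maps $\Lp2$-orthogonality to $\delta^{-1}$-orthogonality: if $u_1 \perp u_2$ then $\scprod{\delta u_1}{\delta u_2}_{\delta^{-1}} = \scprod{\delta u_1}{u_2} $, which is not obviously zero, so I would instead simply invoke the lemma directly with the grouping that matches the claim.

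Cleaner approach, which is what I would actually write: apply \Cref{le:decomposition-of-Hilbert-space-deformed-by-postive-operatore} to the two-fold decomposition $\Lp{2}(\Omega) = \bigl(\grad \cH^{1}_{\Gamma_{a}}(\Omega) \oplus \cohom_{\Gamma_{a},\Gamma_{b},\idop}(\Omega)\bigr) \oplus \rot\cH_{\Gamma_{b}}(\rot,\Omega)$ with $Q = \delta^{-1}$ to get the curl block untouched and the rest multiplied by $\delta$ with $\oplus_{\delta^{-1}}$; then within the first block apply the lemma again. The only genuine obstacle is the compatibility of the two definitions of the cohomology group and making sure $\delta$ respects the relevant closedness and the equality $\delta\bigl(\cH_{\Gamma_{a}}(\rot 0) \cap \delta^{-1}\cH_{\Gamma_{b}}(\div 0)\bigr) = \delta\cH_{\Gamma_{a}}(\rot 0) \cap \cH_{\Gamma_{b}}(\div 0)$, which is immediate from $w \in \delta^{-1}\cH_{\Gamma_b}(\div 0) \iff \delta w \in \cH_{\Gamma_b}(\div 0)$. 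Everything else — orthogonality of the summands, closedness, that $\grad \cH^1_{\Gamma_a}(\Omega)$ and $\rot \cH_{\Gamma_b}(\rot,\Omega)$ are closed — is inherited from \cite[Thm.~5.3]{BaPaScho16}. I expect the write-up to be short: state the unweighted decomposition, invoke the lemma, and rewrite the middle term using the definitions.
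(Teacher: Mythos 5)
There is a genuine gap at the very step you call a ``small bookkeeping check''. The middle summand of the claimed decomposition is
$\cohom_{\delta,\Gamma_{a},\Gamma_{b}}(\Omega)=\delta \cH_{\Gamma_{a}}(\rot 0,\Omega)\cap\cH_{\Gamma_{b}}(\div 0,\Omega)$, i.e.\ only the curl-free constraint is weighted by $\delta$, while the divergence-free constraint is not. This space is \emph{not} $\delta\bigl(\cH_{\Gamma_{a}}(\rot 0,\Omega)\cap\cH_{\Gamma_{b}}(\div 0,\Omega)\bigr)$ for a general matrix-valued $\delta$ (the identity $w\in\delta^{-1}\cH_{\Gamma_{b}}(\div 0,\Omega)\iff\delta w\in\cH_{\Gamma_{b}}(\div 0,\Omega)$ only reconciles the two ways of writing $\cohom_{\delta,\Gamma_{a},\Gamma_{b}}$, not the unweighted intersection with the weighted one). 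Consequently, transporting the unweighted three-fold decomposition through $\delta$ via \Cref{le:decomposition-of-Hilbert-space-deformed-by-postive-operatore} can only ever put $\delta\bigl(\cH_{\Gamma_{a}}(\rot 0,\Omega)\cap\cH_{\Gamma_{b}}(\div 0,\Omega)\bigr)$ (or the unweighted intersection itself) in the middle slot, which is the wrong space. You in fact noticed the symptom yourself: $\delta$ does not carry $\Lp{2}$-orthogonality to $\delta^{-1}$-orthogonality, so $\delta U_{1}\oplus U_{2}$ cannot simply be re-split as $\delta U_{1}\oplus_{\delta^{-1}}\delta U_{2}$. Your ``cleaner approach'' does not repair this: after the first application of the lemma you have $\Lp{2}(\Omega)=\rot\cH_{\Gamma_{b}}(\rot,\Omega)\oplus_{\delta^{-1}}\delta\cH_{\Gamma_{a}}(\rot 0,\Omega)$, but ``applying the lemma again within the first block'' has no orthogonal decomposition to deform that would produce $\delta\grad\cH^{1}_{\Gamma_{a}}(\Omega)\oplus_{\delta^{-1}}\cohom_{\delta,\Gamma_{a},\Gamma_{b}}(\Omega)$; identifying the $\delta^{-1}$-orthogonal complement of $\delta\grad\cH^{1}_{\Gamma_{a}}(\Omega)$ inside $\delta\cH_{\Gamma_{a}}(\rot 0,\Omega)$ as $\cohom_{\delta,\Gamma_{a},\Gamma_{b}}(\Omega)$ is exactly the nontrivial content, and it needs the relation between $\grad$ on $\cH^{1}_{\Gamma_{a}}(\Omega)$ and $-\div$ on $\cH_{\Gamma_{b}}(\div,\Omega)$, not just the deformation lemma.

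The paper closes precisely this hole by working with the two adjoint-pair decompositions rather than the unweighted three-fold one: from $\ker$--$\cl{\ran}$ duality it gets $\Lp{2}(\Omega)=\grad\cH^{1}_{\Gamma_{a}}(\Omega)\oplus\cH_{\Gamma_{b}}(\div 0,\Omega)$, deforms it with \Cref{le:decomposition-of-Hilbert-space-deformed-by-postive-operatore} to $\delta\grad\cH^{1}_{\Gamma_{a}}(\Omega)\oplus_{\delta^{-1}}\cH_{\Gamma_{b}}(\div 0,\Omega)$, and then \emph{intersects} with $\delta\cH_{\Gamma_{a}}(\rot 0,\Omega)$ (using $\grad\cH^{1}_{\Gamma_{a}}(\Omega)\subseteq\cH_{\Gamma_{a}}(\rot 0,\Omega)$) to obtain $\delta\cH_{\Gamma_{a}}(\rot 0,\Omega)=\delta\grad\cH^{1}_{\Gamma_{a}}(\Omega)\oplus_{\delta^{-1}}\cohom_{\delta,\Gamma_{a},\Gamma_{b}}(\Omega)$; combining this with the deformed $\rot$/$\ker\rot$ decomposition finishes the proof. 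If you want to salvage your route, you must replace the bookkeeping remark by an actual computation such as $\bigl(\delta\grad\cH^{1}_{\Gamma_{a}}(\Omega)\bigr)^{\perp_{\delta^{-1}}}=\cH_{\Gamma_{b}}(\div 0,\Omega)$ (which follows from $\scprod{x}{\delta^{-1}\delta\grad g}_{\Lp{2}(\Omega)}=\scprod{x}{\grad g}_{\Lp{2}(\Omega)}$ and the adjointness just mentioned) together with an argument that the sum exhausts $\delta\cH_{\Gamma_{a}}(\rot 0,\Omega)$ — at which point you have essentially reproduced the paper's proof rather than derived the theorem from the unweighted case.
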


\begin{proof}
  Note that the adjoint of $D \coloneqq \grad$ as operator on $\Lp{2}(\Omega)$ with $\dom D = \cH^{1}_{\Gamma_{a}}(\Omega)$ is $-\div$ with domain $\cH_{\Gamma_{b}}(\div,\Omega)$.
  Hence,
  \begin{align}\label{eq:L2-as-ran-grad-plus-ker-div}
    \begin{split}
      \Lp{2}(\Omega)
      &= \cl{\ran D} \oplus \ker D\adjun \\
      &= \grad \cH^{1}_{\Gamma_{a}}(\Omega) \oplus \cH_{\Gamma_{b}}(\div 0,\Omega) \\
      &\stackrel{\mathclap{\text{
        \hyperref[le:decomposition-of-Hilbert-space-deformed-by-postive-operatore]
        {L.\ref{le:decomposition-of-Hilbert-space-deformed-by-postive-operatore}\;\;}
        }}}{=}
        \delta \grad \cH^{1}_{\Gamma_{a}}(\Omega) \oplus_{\delta^{-1}} \cH_{\Gamma_{b}}(\div 0,\Omega)
    \end{split}
  \end{align}
  where the closedness of $\grad \cH^{1}_{\Gamma_{a}}(\Omega)$ is a consequence of Poincar{\'e}'s inequality.
  Since  $\grad \cH^{1}_{\Gamma_{a}}(\Omega) \subseteq \cH_{\Gamma_{a}}(\rot 0, \Omega)$, the intersection of \eqref{eq:L2-as-ran-grad-plus-ker-div} and $\delta \cH_{\Gamma_{a}}(\rot 0,\Omega)$ leads to
  \begin{equation}
    \label{eq:ker-rot-identity}
    \delta \cH_{\Gamma_{a}}(\rot 0, \Omega)
    =
    \delta \grad \cH^{1}_{\Gamma_{a}}(\Omega)
    \oplus_{\delta^{-1}}
    \underbrace{
    \delta \cH_{\Gamma_{a}}(\rot 0, \Omega) \cap \cH_{\Gamma_{b}}(\div 0,\Omega)
    }_{=\mathrlap{\cohom_{\delta,\Gamma_{a},\Gamma_{b}}(\Omega)}}
  \end{equation}
  By the same argument for $D \coloneqq \rot$ with $\dom D = \cH_{\Gamma_{b}}(\rot,\Omega)$ we have
  \begin{align}
    \label{eq:L2-as-ran-rot-plus-ker-rot}
    \begin{split}
      \Lp{2}(\Omega)
      &= {\rot \cH_{\Gamma_{b}}(\rot,\Omega)} \oplus \cH_{\Gamma_{a}}(\rot 0,\Omega) \\
      &= {\rot \cH_{\Gamma_{b}}(\rot,\Omega)} \oplus_{\delta^{-1}} \delta \cH_{\Gamma_{a}}(\rot 0,\Omega),
    \end{split}
  \end{align}
  where the closedness of $\rot \cH_{\Gamma_{b}}(\rot,\Omega)$ follows from~\cite[Lem.~5.2]{BaPaScho16}.
  Combining~\eqref{eq:L2-as-ran-rot-plus-ker-rot} and~\eqref{eq:ker-rot-identity} leads to
  \begin{equation*}
    \Lp{2}(\Omega) =
    \delta \grad \cH^{1}_{\Gamma_{a}}(\Omega)
    \oplus_{\delta^{-1}}
    \cohom_{\delta,\Gamma_{a},\Gamma_{b}}(\Omega)
    \oplus_{\delta^{-1}}
    {\rot \cH_{\Gamma_{b}}(\rot,\Omega)}.
    \qedhere
  \end{equation*}
\end{proof}

\begin{corollary}\label{th:div0-cap-cohomology}
  Let $(\Omega,\Gamma_{a})$ be a strongly Lipschitz pair, $\Gamma_{b} = \partial\Omega \setminus \cl{\Gamma_{a}}$ and $\delta \in \Lb(\Lp{2}(\Omega))$ a strictly positive operator. Then
  \begin{align*}
    \cH_{\Gamma_{b}}(\div 0, \Omega) \cap (\cohom_{\delta,\Gamma_{a},\Gamma_{b}}(\Omega))^{\perp_{\delta^{-1}}} = \rot \cH_{\Gamma_{b}}(\rot,\Omega).
  \end{align*}
\end{corollary}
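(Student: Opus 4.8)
The plan is to deduce this corollary directly from the Helmholtz decomposition of \Cref{th:helmholtz-decomposition}. The decomposition states that
\[
  \Lp{2}(\Omega) =
  \delta \grad \cH^{1}_{\Gamma_{a}}(\Omega)
  \oplus_{\delta^{-1}}
  \cohom_{\delta,\Gamma_{a},\Gamma_{b}}(\Omega)
  \oplus_{\delta^{-1}}
  \rot \cH_{\Gamma_{b}}(\rot,\Omega),
\]
so intersecting both sides with $\cH_{\Gamma_{b}}(\div 0, \Omega)$ should isolate exactly the two summands that lie inside $\cH_{\Gamma_{b}}(\div 0,\Omega)$. First I would observe that $\rot \cH_{\Gamma_{b}}(\rot,\Omega) \subseteq \cH_{\Gamma_{b}}(\div 0,\Omega)$, since $\div \rot = 0$ and the twisted tangential/normal trace of a $\rot$-field vanishes on $\Gamma_{b}$ — this is really the content of \eqref{eq:L2-as-ran-grad-plus-ker-div} read backwards, namely that $\cl{\ran \rot}$ with domain $\cH_{\Gamma_{b}}(\rot,\Omega)$ equals $\ker \div\adjun = \cH_{\Gamma_{b}}(\div 0,\Omega)$ up to the perpendicular pieces; in fact from the proof of \Cref{th:helmholtz-decomposition} we already have $\cH_{\Gamma_{b}}(\div 0,\Omega) = \delta \grad \cH^{1}_{\Gamma_{a}}(\Omega) \oplus_{\delta^{-1}} \cohom_{\delta,\Gamma_{a},\Gamma_{b}}(\Omega) \oplus_{\delta^{-1}} \rot\cH_{\Gamma_{b}}(\rot,\Omega)$ minus — no: more cleanly, \eqref{eq:L2-as-ran-grad-plus-ker-div} gives $\cH_{\Gamma_{b}}(\div 0,\Omega)$ as the $\oplus_{\delta^{-1}}$-complement of $\delta\grad\cH^1_{\Gamma_a}(\Omega)$, and combining with \eqref{eq:ker-rot-identity} and \eqref{eq:L2-as-ran-rot-plus-ker-rot} one sees $\cH_{\Gamma_{b}}(\div 0,\Omega) = \cohom_{\delta,\Gamma_{a},\Gamma_{b}}(\Omega) \oplus_{\delta^{-1}} \rot\cH_{\Gamma_{b}}(\rot,\Omega)$.

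Granting that identity, the corollary is then pure Hilbert-space bookkeeping: if $W$ is a closed subspace and $W = V_{1} \oplus_{\delta^{-1}} V_{2}$ with $V_{1} \perp_{\delta^{-1}} V_{2}$, then $V_{2} = W \cap V_{1}^{\perp_{\delta^{-1}}}$. Applying this with $W = \cH_{\Gamma_{b}}(\div 0,\Omega)$, $V_{1} = \cohom_{\delta,\Gamma_{a},\Gamma_{b}}(\Omega)$ and $V_{2} = \rot \cH_{\Gamma_{b}}(\rot,\Omega)$ yields
\[
  \rot \cH_{\Gamma_{b}}(\rot,\Omega) = \cH_{\Gamma_{b}}(\div 0, \Omega) \cap \big(\cohom_{\delta,\Gamma_{a},\Gamma_{b}}(\Omega)\big)^{\perp_{\delta^{-1}}},
\]
which is the claim. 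So I would structure the proof as: (1) record that $\cH_{\Gamma_{b}}(\div 0,\Omega) = \cohom_{\delta,\Gamma_{a},\Gamma_{b}}(\Omega) \oplus_{\delta^{-1}} \rot\cH_{\Gamma_{b}}(\rot,\Omega)$ — extracting this from the proof of \Cref{th:helmholtz-decomposition}, specifically by intersecting the full decomposition with $\cH_{\Gamma_{b}}(\div 0,\Omega)$, or equivalently combining \eqref{eq:L2-as-ran-grad-plus-ker-div} with \eqref{eq:ker-rot-identity}; (2) invoke the abstract fact above to read off the intersection with the $\delta^{-1}$-orthogonal complement of the cohomology summand.

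The main obstacle is step (1): one must be careful that the three summands in the Helmholtz decomposition are genuinely mutually $\perp_{\delta^{-1}}$-orthogonal and closed (so that intersecting with $\cH_{\Gamma_{b}}(\div 0,\Omega)$ distributes over the sum and kills exactly the $\delta\grad\cH^1_{\Gamma_a}(\Omega)$ piece), and that $\delta\grad\cH^1_{\Gamma_a}(\Omega)\cap\cH_{\Gamma_{b}}(\div 0,\Omega)$ is trivially $\{0\}$ while the other two summands are contained in $\cH_{\Gamma_{b}}(\div 0,\Omega)$ — the containment $\rot\cH_{\Gamma_{b}}(\rot,\Omega)\subseteq\cH_{\Gamma_{b}}(\div 0,\Omega)$ and $\cohom_{\delta,\Gamma_{a},\Gamma_{b}}(\Omega)\subseteq\cH_{\Gamma_{b}}(\div 0,\Omega)$ both follow from the very definitions, the latter since $\cohom_{\delta,\Gamma_{a},\Gamma_{b}}(\Omega) = \delta\cH_{\Gamma_a}(\rot 0,\Omega)\cap\cH_{\Gamma_b}(\div 0,\Omega)$. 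Everything else is routine, so the write-up should be short.
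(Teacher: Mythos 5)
Your proposal is correct and takes essentially the same route as the paper: the paper also obtains $\cohom_{\delta,\Gamma_{a},\Gamma_{b}}(\Omega) \oplus_{\delta^{-1}} \rot \cH_{\Gamma_{b}}(\rot,\Omega) = \cH_{\Gamma_{b}}(\div 0,\Omega)$ by comparing the Helmholtz decomposition of \Cref{th:helmholtz-decomposition} with \eqref{eq:L2-as-ran-grad-plus-ker-div}, and then intersects with $(\cohom_{\delta,\Gamma_{a},\Gamma_{b}}(\Omega))^{\perp_{\delta^{-1}}}$, which is exactly your step (2). The only cosmetic difference is that the paper cancels the common summand $\delta \grad \cH^{1}_{\Gamma_{a}}(\Omega)$ appearing in both orthogonal decompositions of $\Lp{2}(\Omega)$ (so the $\delta^{-1}$-orthogonal complements coincide), which sidesteps the distribution-of-intersection concern you flag as the main obstacle in your step (1).
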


\begin{proof}
  By \Cref{th:helmholtz-decomposition} and~\eqref{eq:L2-as-ran-grad-plus-ker-div} we have
  \begin{multline*}
    \delta \grad \cH^{1}_{\Gamma_{a}}(\Omega)
    \oplus_{\delta^{-1}}
    \cohom_{\delta,\Gamma_{a},\Gamma_{b}}(\Omega)
    \oplus_{\delta^{-1}}
    \rot \cH_{\Gamma_{b}}(\rot,\Omega)
    \\
    = \Lp{2}(\Omega) =
    \delta \grad \cH^{1}_{\Gamma_{a}}(\Omega) \oplus_{\delta^{-1}} \cH_{\Gamma_{b}}(\div 0,\Omega).
  \end{multline*}
  Hence,
  \begin{equation*}
    \cohom_{\delta,\Gamma_{a},\Gamma_{b}}(\Omega)
    \oplus_{\delta^{-1}}
    \rot \cH_{\Gamma_{b}}(\rot,\Omega)
    =
    \cH_{\Gamma_{b}}(\div 0,\Omega)
  \end{equation*}
  and intersecting both sides with $(\cohom_{\delta,\Gamma_{a},\Gamma_{b}}(\Omega))^{\perp_{\delta^{-1}}}$ finishes the proof.
\end{proof}

\begin{lemma}
  The space $\XH$ is a Hilbert space with the inner product $\scprod{x}{y}_{\XH} \coloneqq \scprod{x}{y}_{\hamiltonian} = \scprod{x}{\hamiltonian y}_{\Lp{2}(\Omega)}$.
  Moreover, $\ran A_{0} \subseteq \XH$ and in particular $\XH$ is invariant under the semigroup $T_{0}$ that is generated by $A_{0}$.
\end{lemma}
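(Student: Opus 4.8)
The plan is to prove the three assertions of the lemma in turn: that $\XH$ is a closed subspace of $\Lp{2}(\Omega)$ (hence a Hilbert space for the equivalent inner product $\scprod{\cdot}{\cdot}_{\hamiltonian}$), that $\ran A_{0}\subseteq\XH$, and that this inclusion already forces $\XH$ to be invariant under $T_{0}$.

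For the Hilbert space statement I would observe that $\XH$ is a finite intersection of subspaces of $\Lp{2}(\Omega)$ each of which is closed: the conditions $\div\vec{D}=0$ and $\div\vec{B}=0$ describe kernels of the (distributionally) closed operator $\div$, hence are closed in $\Lp{2}(\Omega)$; the condition ``$\div\vec{B}=0$ and $\normaltr[\Gamma_{0}]\vec{B}=0$'' describes $\cH_{\Gamma_{0}}(\div 0,\Omega)$, which is closed in $\Lp{2}(\Omega)$ because $\normaltr[\Gamma_{0}]$ is bounded on $\Hspace(\div,\Omega)$ while on $\Hspace(\div 0,\Omega)$ the graph norm agrees with the $\Lp{2}$-norm; and $\begin{bsmallmatrix}\cohom_{\epsilon,\partial\Omega,\emptyset}(\Omega)\\\cohom_{\mu,\Gamma_{1},\Gamma_{0}}(\Omega)\end{bsmallmatrix}^{\perp_{\hamiltonian}}$ is an orthogonal complement. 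Since $\hamiltonian$ is strictly positive and bounded, $\scprod{\cdot}{\cdot}_{\hamiltonian}$ is an equivalent inner product on $\Lp{2}(\Omega)$, so $\XH$ with this inner product is a closed subspace of a Hilbert space and therefore itself a Hilbert space.

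For $\ran A_{0}\subseteq\XH$, take $\begin{bsmallmatrix}\vec{D}\\\vec{B}\end{bsmallmatrix}\in\dom A_{0}$ and set $\vec{E}\coloneqq\epsilon^{-1}\vec{D}$, $\vec{H}\coloneqq\mu^{-1}\vec{B}$. By the definition of $\dom A_{0}$ we then have $\vec{E}\in\hH_{\partial\Omega}(\rot,\Omega)\cap\cH_{\Gamma_{0}}(\rot,\Omega)\subseteq\cH_{\Gamma_{0}}(\rot,\Omega)$ and $\vec{H}\in\hH_{\Gamma_{1}}(\rot,\Omega)\subseteq\Hspace(\rot,\Omega)$, while $A_{0}\begin{bsmallmatrix}\vec{D}\\\vec{B}\end{bsmallmatrix}=\begin{bsmallmatrix}\rot\vec{H}\\-\rot\vec{E}\end{bsmallmatrix}$. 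Here \Cref{th:div0-cap-cohomology} does essentially all of the work: applied with $(\Gamma_{a},\Gamma_{b},\delta)=(\partial\Omega,\emptyset,\epsilon)$ (so that $\cH_{\Gamma_{b}}(\rot,\Omega)=\Hspace(\rot,\Omega)$ and $\cH_{\Gamma_{b}}(\div 0,\Omega)=\Hspace(\div 0,\Omega)$) it gives $\rot\vec{H}\in\rot\Hspace(\rot,\Omega)=\Hspace(\div 0,\Omega)\cap\cohom_{\epsilon,\partial\Omega,\emptyset}(\Omega)^{\perp_{\epsilon^{-1}}}$; and applied with $(\Gamma_{a},\Gamma_{b},\delta)=(\Gamma_{1},\Gamma_{0},\mu)$ (using that $(\Omega,\Gamma_{1})$ is a strongly Lipschitz pair) it gives $\rot\vec{E}\in\rot\cH_{\Gamma_{0}}(\rot,\Omega)=\cH_{\Gamma_{0}}(\div 0,\Omega)\cap\cohom_{\mu,\Gamma_{1},\Gamma_{0}}(\Omega)^{\perp_{\mu^{-1}}}$. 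In particular $\div\rot\vec{H}=0$, $\div\rot\vec{E}=0$, $\normaltr[\Gamma_{0}]\rot\vec{E}=0$, and, by the identity $\begin{bsmallmatrix}\cohom_{\epsilon,\partial\Omega,\emptyset}(\Omega)\\\cohom_{\mu,\Gamma_{1},\Gamma_{0}}(\Omega)\end{bsmallmatrix}^{\perp_{\hamiltonian}}=\begin{bsmallmatrix}\cohom_{\epsilon,\partial\Omega,\emptyset}(\Omega)^{\perp_{\epsilon^{-1}}}\\\cohom_{\mu,\Gamma_{1},\Gamma_{0}}(\Omega)^{\perp_{\mu^{-1}}}\end{bsmallmatrix}$ recorded in the excerpt, also $\begin{bsmallmatrix}\rot\vec{H}\\-\rot\vec{E}\end{bsmallmatrix}\in\begin{bsmallmatrix}\cohom_{\epsilon,\partial\Omega,\emptyset}(\Omega)\\\cohom_{\mu,\Gamma_{1},\Gamma_{0}}(\Omega)\end{bsmallmatrix}^{\perp_{\hamiltonian}}$. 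These are precisely the defining conditions of $\XH$, so $A_{0}\begin{bsmallmatrix}\vec{D}\\\vec{B}\end{bsmallmatrix}\in\XH$.

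For the invariance, recall that $A_{0}$ generates a contraction semigroup, so $(0,\infty)\subseteq\rho(A_{0})$. Fix $\lambda>0$ and $y\in\XH$ and put $x\coloneqq(\lambda-A_{0})^{-1}y\in\dom A_{0}$; then $\lambda x=y+A_{0}x$, and since $y\in\XH$ and $A_{0}x\in\ran A_{0}\subseteq\XH$ we get $x=\lambda^{-1}(y+A_{0}x)\in\XH$. Thus $(\lambda-A_{0})^{-1}$, and hence every power $(\lambda-A_{0})^{-n}$, maps $\XH$ into $\XH$; by the exponential formula $T_{0}(t)y=\lim_{n\to\infty}\bigl(\tfrac{n}{t}\bigr)^{n}\bigl(\tfrac{n}{t}-A_{0}\bigr)^{-n}y$ and the closedness of $\XH$ we conclude $T_{0}(t)\XH\subseteq\XH$ for every $t\ge 0$. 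The step requiring the most care is $\ran A_{0}\subseteq\XH$: once one has read off from $\dom A_{0}$ that $\epsilon^{-1}\vec{D}\in\cH_{\Gamma_{0}}(\rot,\Omega)$ and $\mu^{-1}\vec{B}\in\Hspace(\rot,\Omega)$, the only genuine task is to match the three index choices in \Cref{th:div0-cap-cohomology} with the three blocks defining $\XH$; the closedness argument and the semigroup-theoretic passage from resolvent-invariance to semigroup-invariance are routine.
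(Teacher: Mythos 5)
Your proposal is correct and follows essentially the same route as the paper: closedness of $\XH$ as an intersection of closed subspaces with the equivalent inner product $\scprod{\cdot}{\cdot}_{\hamiltonian}$, and the identification of the two components of $\XH$ with $\rot \Hspace(\rot,\Omega)$ and $\rot \cH_{\Gamma_{0}}(\rot,\Omega)$ via \Cref{th:div0-cap-cohomology}, which yields $\ran A_{0} \subseteq \XH$. The only cosmetic difference is that for the final invariance step the paper simply cites \cite[ch.~II sec.~2.3]{engel-nagel}, whereas you spell out the standard argument (resolvent invariance plus the exponential formula and closedness), which is a valid proof of the same fact.
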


\begin{proof}
  Since $\hamiltonian \colon \Lp{2}(\Omega) \to \Lp{2}(\Omega)$ is a positive, bounded and boundedly invertible operator, $\scprod{\cdot}{\cdot}_{\Lp{2}(\Omega)}$ is equivalent to $\scprod{\cdot}{\cdot}_{\XH}$. The closedness of the operators $\div$ and $\normaltr[\Gamma_{0}]$ implies the closedness of $\ker \div$ and $\ker \normaltr[\Gamma_{0}]$. Hence, $\XH$ is the intersection of closed subspaces and therefore closed.

  By the definition of $\XH$ and \Cref{th:div0-cap-cohomology} we have
  \begin{align*}
    \XH
    &=
    \begin{bmatrix}
      \Hspace(\div 0,\Omega) \cap \cohom_{\epsilon,\partial\Omega,\emptyset}(\Omega)^{\perp_{\epsilon^{-1}}} \\
      \cH_{\Gamma_{0}}(\div,\Omega) \cap \cohom_{\mu,\Gamma_{1},\Gamma_{0}}(\Omega)^{\perp_{\mu^{-1}}}
    \end{bmatrix}
    =
    \begin{bmatrix}
      \rot \Hspace(\rot,\Omega) \\ \rot \cH_{\Gamma_{0}}(\rot,\Omega)
    \end{bmatrix}
    \\[1ex]
    &=
      \begin{bmatrix}
        0 & \rot \\
        -\rot & 0
      \end{bmatrix}
                \begin{bmatrix}
                  \cH_{\Gamma_{0}}(\rot,\Omega) \\
                  \Hspace(\rot,\Omega)
                \end{bmatrix}
    =
      \smash[b]{%
      \underbrace{%
      \begin{bmatrix}
        0 & \rot \\
        -\rot & 0
      \end{bmatrix}
                \hamiltonian}_{\supseteq\mathrlap{A_{0}}} \underbrace{\hamiltonian^{-1}
                \begin{bmatrix}
                  \cH_{\Gamma_{0}}(\rot,\Omega) \\
                  \Hspace(\rot,\Omega)
                \end{bmatrix}}_{\supseteq\mathrlap{\dom A_{0}}}
      } \\[1.5ex]
    &\supseteq \ran A_{0}.
  \end{align*}
  By~\cite[ch.\ II sec.\ 2.3]{engel-nagel}, $\XH \supseteq \ran A_{0}$ implies that $\XH$ is invariant under the semigroup $T_{0}$ that is generated by $A_{0}$.
\end{proof}

Finally, we introduce the differential operator and its domain that describes the dynamic of our system.
\begin{definition}
  \label{def:differential-operator-A}
  We define $A \coloneqq A_{0}\big\vert_{\XH}$, which we regard as an operator $A\colon \dom A \subseteq \XH \to \XH$. The domain of $A$ is given by $\dom A = \dom A_{0} \cap \XH$, i.e.,
  \begin{align*}
    A\colon\mapping{\dom A_{0} \cap \XH \subseteq \XH}{\XH}{x}{A_{0}x.}
  \end{align*}
\end{definition}

Note that $A$ is a generator of a contraction semigroup, since $A_{0}$ is a generator of a contraction semigroup and $\XH$ is closed and invariant under the semigroup $T_{0}$ generated by $A_{0}$. In particular, the semigroup $T$ that is generated by $A$ is given by $T(t) = T_{0}(t)\big\vert_{\XH}$, see e.g., \cite[ch. II sec. 2.3]{engel-nagel}.

\section{Semi-uniform stability}\label{sec:semi-uniform-stability}

We will regard the semigroup $T$ that is generated by the operator $A$ from the previous section, defined in \Cref{def:differential-operator-A}. Our goal is to show that this semigroup is semi-uniformly stable.

\begin{definition}\label{def:semi-uniform-stability}
  Let $A$ be the generator of the strongly continuous and bounded semigroup $(T(t))_{t\geq 0}$ on a Hilbert space $X$. Then we say that $(T(t))_{t\geq 0}$ is semi-uniformly stable, if there exists a continuous and decreasing function $f\colon [0,+\infty) \to [0,+\infty)$ with $\lim_{t\to+\infty} f(t) = 0$ and
  \begin{align}\label{eq:semi-uniform-stability}
    \norm{T(t)x}_{X} \leq f(t) \norm{x}_{\dom A}
  \end{align}
  for every $x \in \dom A$, where $\norm{\cdot}_{\dom A}$ denotes the graph norm of $A$.
\end{definition}

The difference between uniform stability and semi-uniform stability is that on the right-hand side of \eqref{eq:semi-uniform-stability} there is the graph norm of the generator $A$ instead of just the norm of the Hilbert space $X$.


It is very common to define semi-uniform stability by an equivalent characterization, namely item \textup{(iii)} of the next theorem.

\begin{theorem}[{\cite{BaDu08}}]\label{th:equivalences-of-semi-uniform-stability}
  Let $T$ be a strongly continuous and bounded semigroup generated by $A$.
  Then the following assertions are equivalent.
  \begin{enumerate}
    \item $T$ is semi-uniformly stable.
    \item $\norm{T(t)(A-\lambda)^{-1}} \to 0$ for all $\lambda \in \uprho(A)$.
    \item $\norm{T(t) A^{-1}} \to 0$.
    \item $\iu \R \subseteq \uprho(A)$.
  \end{enumerate}
\end{theorem}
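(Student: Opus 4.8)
The plan is to prove this known theorem (due to Batty--Duyckaerts) by running the cycle of implications (i)$\Rightarrow$(iv)$\Rightarrow$(iii)$\Rightarrow$(ii)$\Rightarrow$(i), where the only substantial step is (iv)$\Rightarrow$(iii), a Tauberian statement in the spirit of the Katznelson--Tzafriri theorem. Throughout, set $M \coloneqq \sup_{t\geq 0}\norm{T(t)} < \infty$ and write $R(\lambda) \coloneqq (\lambda-A)^{-1}$ for $\lambda\in\uprho(A)$; since $T$ is bounded we have $\sigma(A)\subseteq\set{\Re\lambda\leq 0}$, so every $\iu s\in\sigma(A)\cap\iu\R$ is a boundary point of $\sigma(A)$ and hence an approximate eigenvalue of $A$. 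Note also that $0\in\uprho(A)$ is implicit in the meaning of (iii) and is contained in (iv).

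For (i)$\Rightarrow$(iv) I argue by contradiction. If $\iu s\in\sigma(A)$ for some $s\in\R$, pick $x_n\in\dom A$ with $\norm{x_n}=1$ and $(\iu s-A)x_n\to 0$; differentiating $t\mapsto\euler^{-\iu s t}T(t)x_n$ and integrating gives
\[
  \norm[\big]{T(t)x_n-\euler^{\iu s t}x_n}\leq\int_0^t\norm{T(r)}\,\norm{(\iu s-A)x_n}\,\diffd r\leq tM\norm{(\iu s-A)x_n}\longrightarrow 0\qquad(n\to\infty)
\]
for each fixed $t$, so $\norm{T(t)x_n}\to 1$. Since $\norm{Ax_n}\leq\abs{s}+\norm{(\iu s-A)x_n}$ stays bounded, $\norm{x_n}_{\dom A}$ is bounded by some $C$, and the estimate $\norm{T(t)x_n}\leq f(t)\norm{x_n}_{\dom A}$ from (i) forces $f(t)\geq 1/C>0$ for all $t$ on letting $n\to\infty$, contradicting $f(t)\to 0$. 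Hence $\sigma(A)\cap\iu\R=\emptyset$. The two closing implications are elementary: (iii)$\Rightarrow$(ii) follows from the resolvent identity $R(\lambda)=A^{-1}(\lambda R(\lambda)-\idop)$ and the commuting of $A^{-1}$ with $R(\lambda)$, since then $\norm{T(t)R(\lambda)}\leq\norm{T(t)A^{-1}}\,\norm{\lambda R(\lambda)-\idop}\to 0$; and (ii)$\Rightarrow$(i) follows from $\norm{T(t)x}=\norm{T(t)R(1)(\idop-A)x}\leq C\norm{T(t)R(1)}\,\norm{x}_{\dom A}$ for $x\in\dom A$, so that a continuous decreasing majorant of $t\mapsto C\norm{T(t)R(1)}$ serves as the function $f$.

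The crux is (iv)$\Rightarrow$(iii), which I would prove via a Tauberian theorem. As $\uprho(A)$ is open and contains $\set{\Re\lambda>0}\cup\iu\R$, the map $\lambda\mapsto R(\lambda)A^{-1}$ is norm-holomorphic on an open neighbourhood of $\set{\Re\lambda\geq 0}$. The $\Lb(X)$-valued function $G(t)\coloneqq T(t)A^{-1}$ is norm-bounded, and norm-uniformly continuous because
\[
  \norm{G(t+h)-G(t)}=\norm{T(t)(T(h)-\idop)A^{-1}}\leq M\sup_{\norm{x}\leq 1}\norm[\Big]{\int_0^h T(r)x\,\diffd r}\leq M^2 h ,
\]
and its Laplace transform on $\set{\Re\lambda>0}$ is $R(\lambda)A^{-1}$, which therefore extends holomorphically across $\iu\R$. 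By the operator-valued Ingham--Karamata Tauberian theorem, $\int_0^T G(t)\,\diffd t$ converges in operator norm as $T\to\infty$; and by Barbalat's lemma (a norm-bounded, norm-uniformly continuous function whose integral converges must tend to $0$) we conclude $\norm{T(t)A^{-1}}=\norm{G(t)}\to 0$. In a Hilbert space---the case relevant here---no restriction on the space is needed; in a general Banach space one invokes the holomorphic-extension form of the Tauberian theorem, which is exactly the refinement carried out in \cite{BaDu08} and which additionally yields quantitative decay rates governed by the growth of $\norm{R(\iu s)}$ as $\abs{s}\to\infty$.

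The main obstacle is this Tauberian step: upgrading the purely qualitative hypothesis $\iu\R\subseteq\uprho(A)$ to norm-decay of $T(t)A^{-1}$ requires extracting from the Tauberian machinery both that a holomorphic extension of the resolvent past the imaginary axis forces decay (not mere boundedness) of $T(t)A^{-1}$, and the passage from convergence of the averaged integral $\int_0^T G$ to pointwise decay of $G$. Everything else reduces to routine manipulations with Duhamel's formula, the resolvent identity, and approximate eigenvectors.
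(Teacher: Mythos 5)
The paper does not actually prove this theorem: it is quoted as a known result from \cite{BaDu08} and used as a black box, so there is no in-paper argument to compare yours against. Your blind proof is, however, correct and follows essentially the standard route of the cited literature: (i)$\Rightarrow$(iv) via approximate eigenvectors (boundary spectrum of the generator of a bounded semigroup lies in the approximate point spectrum) together with the Duhamel estimate $\norm{T(t)x_n-\euler^{\iu st}x_n}\leq tM\norm{(\iu s-A)x_n}$; (iv)$\Rightarrow$(iii) via the vector-valued Ingham--Karamata/Newman-type Tauberian theorem applied to $G(t)=T(t)A^{-1}\in\Lb(X)$ followed by Barbalat's lemma; and (iii)$\Rightarrow$(ii)$\Rightarrow$(i) by the resolvent identity $R(\lambda)=A^{-1}(\lambda R(\lambda)-\idop)$ and the graph-norm estimate with $\lambda=1$. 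Two cosmetic points: the remark that ``in a Hilbert space no restriction on the space is needed'' is a red herring, since the Tauberian theorem you invoke holds for functions valued in an arbitrary Banach space (here $\Lb(X)$), so nothing Hilbertian enters; and in (ii)$\Rightarrow$(i) one should add a word on why a \emph{continuous} decreasing majorant of $t\mapsto C\norm{T(t)(1-A)^{-1}}$ tending to zero exists (take $\sup_{s\geq t}$ and mollify), which is routine. What your qualitative argument does not recover, and what \cite{BaDu08} additionally provides, are the quantitative decay rates in terms of the growth of $\norm{(\iu s-A)^{-1}}$; but for the equivalence as stated your proof is complete.
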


Hence, by the previous theorem, the question about semi-uniform stability of $A$ reduces to the following problem.

\begin{problem}\label{pro:reduced-question}
  Show for every $\omega \in \R$ that $\iu \omega \in \uprho(A)$ or equivalently that $\iu \omega \notin \upsigma(A)$.
\end{problem}

A compact resolvent simplifies the task of finding resolvent points. Hence, the following theorem from \cite[Thm.~4.1]{PaSk21} will reduce the problem such that we only need make sure that $\iu \omega$ is not an eigenvalue, see \Cref{th:operator-with-compact-resolvent-only-eigenvalues}.

\begin{theorem}[Compact embedding, {\cite[Thm.~4.1]{PaSk21}}]
  \label{th:compact-embedding-for-div-rot-space}
  Let $(\Omega,\Gamma_{a})$ be a strongly Lipschitz pair, $\Gamma_{b} = \partial\Omega \setminus \cl{\Gamma_{a}}$ and $\delta\colon \Omega \to \R^{3\times 3}$ such that the corresponding multiplication operator is in $\Lb(\Lp{2}(\Omega))$ and strictly positive. Then
  \begin{equation*}
    \hH_{\Gamma_{a}}(\rot,\Omega) \cap \delta^{-1} \hH_{\Gamma_{b}}(\div,\Omega) \cpt \Lp{2}(\Omega),
  \end{equation*}
  where $\cpt$ denotes the existence of a compact embedding.
\end{theorem}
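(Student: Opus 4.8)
The plan is to deduce the compact embedding from the Helmholtz-type decomposition established above (\Cref{th:helmholtz-decomposition} and \Cref{th:div0-cap-cohomology}) together with the classical Weber--Weck--Picard compactness result for the \emph{scalar} case, i.e.\ the compact embedding $\hH_{\Gamma_a}(\rot,\Omega)\cap\hH_{\Gamma_b}(\div,\Omega)\cpt\Lp2(\Omega)$ with weight $\delta=\idop$ (this is the statement for strongly Lipschitz pairs, which is available in the literature cited in the excerpt, e.g.\ \cite{BaPaScho16}). The task is then to pass from the trivial weight to a general strictly positive, bounded matrix weight $\delta$ by a functional-analytic reduction that does not touch the geometry.

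First I would take a bounded sequence $(u_n)$ in $\hH_{\Gamma_a}(\rot,\Omega)\cap\delta^{-1}\hH_{\Gamma_b}(\div,\Omega)$; then $(u_n)$, $(\rot u_n)$, $(\div\delta u_n)$ and $(\normaltr[\Gamma_b]\delta u_n)$ are all bounded in the respective $\Lp2$ spaces, and after passing to a subsequence all of these converge weakly. Using \Cref{th:helmholtz-decomposition} with this very weight $\delta$, decompose $u_n = \delta\grad p_n \oplus_{\delta^{-1}} v_n \oplus_{\delta^{-1}} w_n$ with $p_n\in\cH^1_{\Gamma_a}(\Omega)$, $v_n\in\cohom_{\delta,\Gamma_a,\Gamma_b}(\Omega)$, $w_n\in\rot\cH_{\Gamma_b}(\rot,\Omega)$. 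The cohomology summand $v_n$ lives in a finite-dimensional space (the cohomology group is finite-dimensional for strongly Lipschitz pairs), so it converges strongly along a subsequence with no further work. For the gradient part, $\delta\grad p_n\in\hH_{\Gamma_b}(\div,\Omega)$ carries all the divergence data, while its $\rot$ vanishes; for the $\rot$ part, $w_n\in\rot\cH_{\Gamma_b}(\rot,\Omega)$ has $\div\delta w_n$ controlled (it picks up the rest of the divergence information modulo the gradient part) and $\rot w_n=\rot u_n$ bounded. I would then show each of these two pieces lies in a bounded set of $\hH_{\Gamma_a}(\rot,\Omega)\cap\hH_{\Gamma_b}(\div,\Omega)$ — the \emph{unweighted} space — possibly after multiplying/dividing by $\delta$ and absorbing $\delta\in\Lb(\Lp2(\Omega))$ (this is where boundedness and strict positivity of $\delta$, together with the Poincaré-type a priori estimates underlying the decomposition, are used), and invoke the scalar compact embedding to extract strongly convergent subsequences. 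Summing the three strongly convergent pieces gives strong convergence of $u_n$ in $\Lp2(\Omega)$.

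Alternatively — and this is probably the cleaner write-up — one can avoid splitting the sequence and instead argue by contradiction on the level of the orthogonality/range identities: if the embedding failed, there would be an $\Lp2$-bounded, non-$\Lp2$-precompact sequence; reduce to the complement of the (finite-dimensional, hence harmless) cohomology group via \Cref{th:div0-cap-cohomology}, where the relevant space is $\rot\cH_{\Gamma_b}(\rot,\Omega)=\cH_{\Gamma_b}(\div0,\Omega)\cap\cohom_{\delta,\Gamma_a,\Gamma_b}(\Omega)^{\perp_{\delta^{-1}}}$ together with the analogous $\grad$-range identity \eqref{eq:L2-as-ran-grad-plus-ker-div}, and on that space $\delta^{-1}$ is an isomorphism onto the unweighted ``solenoidal'' space. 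One then transports the bounded sequence through $\delta^{\pm1}$ to land in the unweighted $\rot$-$\div$ space, applies the classical compactness there, and transports back; since $\delta^{\pm1}$ are bounded, strong $\Lp2$-convergence is preserved.

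The main obstacle is the bookkeeping in the reduction from matrix weight to trivial weight: one must check that multiplying by $\delta$ really maps the weighted constraint space boundedly into the unweighted one, which requires that $\rot(\delta u)$ or $\div(\delta w)$ stay controlled — these are \emph{not} automatic from $\delta\in\Lb(\Lp2(\Omega))$ alone for a general measurable matrix $\delta$, so one genuinely has to route the argument through the Helmholtz summands, where on each piece either $\rot$ or $\div$ of the suitably weighted field is zero or directly given by the data. Handling the $\Gamma_a$/$\Gamma_b$ split boundary conditions consistently through these manipulations (the tangential trace on $\Gamma_a$, the normal trace on $\Gamma_b$) is the other place where care is needed, but \Cref{th:helmholtz-decomposition} and \Cref{th:div0-cap-cohomology} have already packaged exactly the decompositions with the correct boundary conditions, so this should go through.
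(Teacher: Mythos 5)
The paper does not prove this statement at all: it is imported verbatim from \cite[Thm.~4.1]{PaSk21}, so your proposal has to stand on its own, and as written it does not. The first problem is the base case you want to reduce to. What \cite{BaPaScho16} provides for (strongly) Lipschitz pairs is the compact embedding for the spaces with \emph{homogeneous} mixed boundary conditions, $\cH_{\Gamma_{a}}(\rot,\Omega)\cap\delta^{-1}\cH_{\Gamma_{b}}(\div,\Omega)\cpt\Lp{2}(\Omega)$ --- and in fact already for an arbitrary admissible weight $\delta$, so removing the weight is not where the difficulty lies. The genuinely new content of the theorem you are proving is the inhomogeneous $\Lp{2}$ traces encoded in the hatted spaces $\hH_{\Gamma_{a}}(\rot,\Omega)$ and $\hH_{\Gamma_{b}}(\div,\Omega)$: only $\tantr[\Gamma_{a}]u\in\Lp{2}(\Gamma_{a})$ and $\normaltr[\Gamma_{b}]\delta u\in\Lp{2}(\Gamma_{b})$ are required, not their vanishing. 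An unweighted version of \emph{that} statement for a split boundary is not classical Weber--Weck--Picard and is not in \cite{BaPaScho16}; for $\Gamma_{a}=\partial\Omega$ one can get it from Costabel's $\Hspace^{\nicefrac{1}{2}}(\Omega)$-regularity, but for a genuine $\Gamma_{a}/\Gamma_{b}$ split it is exactly the cited result of \cite{PaSk21}. So ``strip the weight and quote the scalar case'' silently assumes essentially the theorem itself; the reduction actually needed is ``inhomogeneous $\Lp{2}$ traces $\Rightarrow$ homogeneous traces'', which your argument never addresses.

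Second, even granting an unweighted hatted mixed compactness result, the weight-removal mechanism you describe breaks on the gradient summand. If you decompose $\delta u_{n}$ by \Cref{th:helmholtz-decomposition}, i.e.\ $u_{n}=\grad p_{n}+\delta^{-1}v_{n}+\delta^{-1}w_{n}$, then $\grad p_{n}$ satisfies $\rot\grad p_{n}=0$, $\tantr[\Gamma_{a}]\grad p_{n}=0$, and has controlled $\div(\delta\grad p_{n})$ and $\normaltr[\Gamma_{b}]\delta\grad p_{n}$ --- that is, it is bounded in the \emph{same} $\delta$-weighted space, not the unweighted one: $\div\grad p_{n}$ is not controlled, and $\rot(\delta\grad p_{n})$ need not even exist for a merely bounded measurable matrix $\delta$. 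The solenoidal piece $\delta^{-1}w_{n}$ has the mirror-image defect ($\div(\delta^{-1}w_{n})$ uncontrolled), and using instead the unweighted decomposition of $u_{n}$ leaves $\grad p_{n}$ with no divergence information at all. So the obstruction you flag at the end reappears verbatim on each summand, and multiplying or dividing by $\delta$ cannot repair it; the classical embedding is never applicable to any single piece, and your second ``transport through $\delta^{\pm1}$'' variant fails for the same reason. A correct weight-removal argument exists, but it needs an ingredient absent from your sketch: take the decomposition with $\delta=\idop$ (then $w_{n}$ \emph{is} bounded in $\hH_{\Gamma_{a}}(\rot,\Omega)\cap\hH_{\Gamma_{b}}(\div,\Omega)$ and the finite-dimensional cohomology part is harmless), and treat the gradient part not by an embedding but by duality: test $\grad(p_{n}-p_{m})$ against $\delta(u_{n}-u_{m})$, integrate by parts onto $\div\delta(u_{n}-u_{m})$ and $\normaltr[\Gamma_{b}]\delta(u_{n}-u_{m})$, and use Rellich for $p_{n}$ together with compactness of the trace map $\Hspace^{1}(\Omega)\to\Lp{2}(\partial\Omega)$, the strong convergence of $w_{n}$ absorbing the remaining term. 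Without this (or an equivalent) argument for the gradient component --- and without an honest source or proof for the inhomogeneous-trace base case --- the proof has a genuine gap. (You also use, without justification, that $\cohom_{\delta,\Gamma_{a},\Gamma_{b}}(\Omega)$ is finite-dimensional; this is true for strongly Lipschitz pairs and independent of $\delta$, but it is an external input, not something established in this paper.)
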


In particular, as a consequence of this compact embedding we obtain the following proposition. A special case for $\epsilon = \mu = 1$ has been regarded in~{\cite[Thm.~5.6]{PaSk21}}.
\begin{proposition}
  \label{th:A-has-compact-resolvent}
  The operator $A$ has a compact resolvent.
\end{proposition}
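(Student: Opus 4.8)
The plan is to reduce the statement to the compact embedding of \Cref{th:compact-embedding-for-div-rot-space}. Since $A$ generates a contraction semigroup, $\uprho(A)\neq\emptyset$; fixing $\lambda\in\uprho(A)$, the resolvent $(A-\lambda)^{-1}$ maps $\XH$ boundedly into $\dom A$ equipped with the graph norm, and it factors through the canonical embedding $\iota\colon(\dom A,\norm{\cdot}_{\dom A})\to\XH$. Hence it suffices to show that $\iota$ is compact; compactness of $(A-\lambda)^{-1}$ for one $\lambda$ then propagates to every point of $\uprho(A)$ via the resolvent identity.

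To show $\iota$ is compact I would exhibit a space into which $\dom A$ embeds continuously and which embeds compactly into $\XH=\Lp{2}(\Omega)$. Take $x=\begin{bsmallmatrix}\vec D\\\vec B\end{bsmallmatrix}\in\dom A$ and set $\begin{bsmallmatrix}\vec E\\\vec H\end{bsmallmatrix}=\hamiltonian x=\begin{bsmallmatrix}\epsilon^{-1}\vec D\\\mu^{-1}\vec B\end{bsmallmatrix}$. From $\dom A_{0}$ we have $\vec E\in\hH_{\partial\Omega}(\rot,\Omega)\cap\cH_{\Gamma_{0}}(\rot,\Omega)$ and $\vec H\in\hH_{\Gamma_{1}}(\rot,\Omega)$, while membership in $\XH$ gives $\div\epsilon\vec E=0$, $\div\mu\vec H=0$ and $\normaltr[\Gamma_{0}]\mu\vec H=0$; thus $\vec E\in\hH_{\partial\Omega}(\rot,\Omega)\cap\epsilon^{-1}\Hspace(\div,\Omega)$ and $\vec H\in\hH_{\Gamma_{1}}(\rot,\Omega)\cap\mu^{-1}\hH_{\Gamma_{0}}(\div,\Omega)$ (using $\Hspace(\div,\Omega)=\hH_{\emptyset}(\div,\Omega)$). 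Since $Ax=\begin{bsmallmatrix}\rot\vec H\\-\rot\vec E\end{bsmallmatrix}$, the graph norm $\norm{x}_{\dom A}$ controls $\norm{\vec E}_{\Lp{2}}$, $\norm{\vec H}_{\Lp{2}}$, $\norm{\rot\vec E}_{\Lp{2}}$, $\norm{\rot\vec H}_{\Lp{2}}$, while $\div\epsilon\vec E$, $\div\mu\vec H$ and $\normaltr[\Gamma_{0}]\mu\vec H$ vanish. The only pieces of the natural norms of $\hH_{\partial\Omega}(\rot,\Omega)\cap\epsilon^{-1}\Hspace(\div,\Omega)$ and $\hH_{\Gamma_{1}}(\rot,\Omega)\cap\mu^{-1}\hH_{\Gamma_{0}}(\div,\Omega)$ not yet accounted for are the boundary trace terms.

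These I would control through the dissipativity of $A_{0}$: extending the integration-by-parts identity for $\rot$ and inserting $\tantr[\Gamma_{0}]\vec E=0$ together with the feedback condition $\tantr[\Gamma_{1}]\vec E=-k\,\tanxtr[\Gamma_{1}]\vec H$ gives
\begin{align*}
  \Re\scprod{A_{0}x}{x}_{\hamiltonian}
  &=\Re\scprod{\tanxtr[\Gamma_{1}]\vec H}{\tantr[\Gamma_{1}]\vec E}_{\Lp{2}(\Gamma_{1})}
   =-\Re\scprod{\tanxtr[\Gamma_{1}]\vec H}{k\,\tanxtr[\Gamma_{1}]\vec H}_{\Lp{2}(\Gamma_{1})}
  \\
  &\leq-c^{-1}\norm{\tanxtr[\Gamma_{1}]\vec H}_{\Lp{2}(\Gamma_{1})}^{2},
\end{align*}
so $\norm{\tanxtr[\Gamma_{1}]\vec H}_{\Lp{2}(\Gamma_{1})}^{2}\leq c\,\norm{A_{0}x}_{\hamiltonian}\norm{x}_{\hamiltonian}\leq\tfrac{c}{2}\norm{x}_{\dom A}^{2}$. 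As the twisted tangential trace differs from the tangential trace only by the pointwise rotation $v\mapsto\nu\times v$ on the tangent plane, $\norm{\tantr[\Gamma_{1}]\vec H}_{\Lp{2}(\Gamma_{1})}=\norm{\tanxtr[\Gamma_{1}]\vec H}_{\Lp{2}(\Gamma_{1})}$ is controlled by $\norm{x}_{\dom A}$, and so is $\norm{\tantr[\partial\Omega]\vec E}_{\Lp{2}(\partial\Omega)}=\norm{\tantr[\Gamma_{1}]\vec E}_{\Lp{2}(\Gamma_{1})}=\norm{k\,\tanxtr[\Gamma_{1}]\vec H}_{\Lp{2}(\Gamma_{1})}\leq c\,\norm{\tanxtr[\Gamma_{1}]\vec H}_{\Lp{2}(\Gamma_{1})}$. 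Hence $\dom A$ embeds continuously into $\big[\hH_{\partial\Omega}(\rot,\Omega)\cap\epsilon^{-1}\Hspace(\div,\Omega)\big]\times\big[\hH_{\Gamma_{1}}(\rot,\Omega)\cap\mu^{-1}\hH_{\Gamma_{0}}(\div,\Omega)\big]$, and applying \Cref{th:compact-embedding-for-div-rot-space} to the two factors (with $(\Gamma_{a},\Gamma_{b},\delta)=(\partial\Omega,\emptyset,\epsilon)$ and $(\Gamma_{1},\Gamma_{0},\mu)$, respectively; note that $(\Omega,\Gamma_{1})$ is a strongly Lipschitz pair because $(\Omega,\Gamma_{0})$ is) shows that this product embeds compactly into $\Lp{2}(\Omega)\times\Lp{2}(\Omega)$. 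Composing yields compactness of $\iota$, hence of the resolvent of $A$. The main obstacle is exactly the a priori bound on $\tanxtr[\Gamma_{1}]\vec H$: the graph norm of $A$ alone carries no information about it, so the argument genuinely uses the dissipation identity and $k\geq c^{-1}>0$, and also the passage from $A_{0}$ to $A$, i.e.\ the Gauß-law constraints $\div\vec D=\div\vec B=0$ (without them $\dom A_{0}$ does not embed compactly into $\Lp{2}(\Omega)$ at all).
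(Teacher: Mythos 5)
Your proof is correct and takes essentially the same route as the paper: both reduce compactness of the resolvent to the compact embedding of \Cref{th:compact-embedding-for-div-rot-space}, applied with $(\Gamma_a,\Gamma_b,\delta)=(\partial\Omega,\emptyset,\epsilon)$ and $(\Gamma_1,\Gamma_0,\mu)$, after noting that $\dom A$ sits inside the corresponding intersected $\rot$--$\div$ spaces. The only difference is that you make the continuity of $(\dom A,\norm{\cdot}_{\dom A})\hookrightarrow \hH_{\partial\Omega}(\rot,\Omega)\cap\epsilon^{-1}\Hspace(\div,\Omega)\times\hH_{\Gamma_1}(\rot,\Omega)\cap\mu^{-1}\hH_{\Gamma_0}(\div,\Omega)$ explicit by bounding the $\Lp{2}(\Gamma_1)$ traces through the dissipation identity of \Cref{le:int-by-parts-for-A}, a step the paper leaves implicit (it also follows from a closed graph argument, since elements of $\dom A_0$ have $\Lp{2}$ traces by definition).
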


\begin{proof}
  By definition of $A$ we have $\dom A = \dom A_{0} \cap \XH$. Hence, by construction and \Cref{th:compact-embedding-for-div-rot-space} we have
  \begin{equation*}
    \dom A \subseteq
    \begin{bmatrix}
      \epsilon \hH_{\partial\Omega}(\rot,\Omega) \cap \Hspace(\div,\Omega) \\
      \mu \hH_{\Gamma_{1}}(\rot,\Omega) \cap \hH_{\Gamma_{0}}(\div,\Omega)
    \end{bmatrix}
    \cpt \Lp{2}(\Omega).
    \qedhere
  \end{equation*}
\end{proof}

Hence, $A$ has only point spectrum. So we only need to check, whether $\ker (A - \iu \omega) = \set{0}$ for all $\omega \in \R$. For convenience we provide a proof for this conclusion.

\begin{theorem}\label{th:operator-with-compact-resolvent-only-eigenvalues}
  Let $\mathcal{X}$ be a Hilbert space and $A\colon \dom A \subseteq \mathcal{X} \to \mathcal{X}$ be a closed and densely defined operator with compact resolvent. Then $\upsigma(A) = \upsigma_{\mathrm{p}}(A)$, i.e., the spectrum of $A$ contains only eigenvalues. Moreover, $\upsigma(A)$ is countable and has no accumulation points (or at most $\infty$ as accumulation point, if we regard the spectrum as subset of $\C \cup \set{\infty}$).
\end{theorem}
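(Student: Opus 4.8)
The plan is to deduce everything from the Riesz--Schauder theory of compact operators. First I would fix a point $\lambda_{0} \in \uprho(A)$ (the resolvent set is nonempty by the standing convention that ``$A$ has compact resolvent'' means $(A-\lambda_{0})^{-1}$ is compact for some $\lambda_{0}\in\uprho(A)$) and set $R \coloneqq (A - \lambda_{0})^{-1} \in \Lb(\mathcal{X})$, which is compact. Since $A$ is closed, $\dom A$ equipped with the graph norm is a Banach space and $A - \lambda_{0}\colon \dom A \to \mathcal{X}$ is a homeomorphism with inverse $R$; in particular $\ran R = \dom A$. (Via the resolvent identity, compactness of $R$ automatically propagates to the resolvent at every point of $\uprho(A)$, though this is not needed below.)

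Next, for an arbitrary $\lambda \in \C$ I would use the factorisation on $\dom A$
\begin{align*}
  A - \lambda = \bigl(\idop - (\lambda - \lambda_{0}) R\bigr)(A - \lambda_{0}),
\end{align*}
obtained by writing $\idop = R(A-\lambda_{0})$ on $\dom A$. Since $A - \lambda_{0}$ is a homeomorphism of $\dom A$ onto $\mathcal{X}$, the operator $A - \lambda$ is boundedly invertible (equivalently, by the closed graph theorem, a bijection $\dom A \to \mathcal{X}$) if and only if $\idop - (\lambda - \lambda_{0})R$ is boundedly invertible on $\mathcal{X}$. As $(\lambda - \lambda_{0})R$ is compact, the Fredholm alternative yields: either $\idop - (\lambda - \lambda_{0})R$ is invertible, so $\lambda \in \uprho(A)$; or it has nontrivial kernel, in which case there is $x \ne 0$ with $x = (\lambda - \lambda_{0})Rx$, hence $x \in \ran R = \dom A$, and applying $A - \lambda_{0}$ gives $(A - \lambda_{0})x = (\lambda - \lambda_{0})x$, i.e.\ $Ax = \lambda x$. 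Thus every point of $\upsigma(A)$ is an eigenvalue, which is the claim $\upsigma(A) = \upsigma_{\mathrm{p}}(A)$.

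For the structural part I would exploit the spectral correspondence between $A$ and $R$: for $\lambda \ne \lambda_{0}$ one has, from $(\lambda-\lambda_{0})^{-1}\idop - R = (\lambda-\lambda_{0})^{-1}\bigl(\idop - (\lambda-\lambda_{0})R\bigr)$, that $\lambda \in \upsigma(A)$ if and only if $(\lambda - \lambda_{0})^{-1} \in \upsigma(R)$. By Riesz--Schauder, $\upsigma(R)$ is at most countable with $0$ as its only possible accumulation point; transporting this back through $\mu \mapsto \lambda_{0} + \mu^{-1}$ (note $\lambda_{0}\notin\upsigma(A)$ and $0$ corresponds to ``$\infty$'') shows $\upsigma(A)$ is at most countable and has no accumulation point in $\C$, i.e.\ its only possible accumulation point in $\C \cup \set{\infty}$ is $\infty$. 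If $\mathcal{X}$ is finite dimensional the assertions are trivial, since then $\upsigma(A)$ is finite.

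The argument is entirely standard; the only places needing a little care are the bookkeeping in the factorisation above---to legitimately pass between bijectivity of $A-\lambda$ on $\dom A$ and invertibility of $\idop - (\lambda-\lambda_{0})R$ on $\mathcal{X}$, which rests on $A$ being closed---and the precise form of the spectral correspondence $\lambda\mapsto(\lambda-\lambda_{0})^{-1}$ relating $\upsigma(A)$ and $\upsigma(R)$. I do not expect any genuine obstacle.
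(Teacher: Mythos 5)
Your proof is correct. Both you and the paper ultimately rest on the Riesz--Schauder description of $\upsigma\bigl((A-\lambda_{0})^{-1}\bigr)$, but you get there by a different mechanism: the paper simply cites the spectral mapping theorem for unbounded operators (or linear relations) to write $\upsigma\bigl((A-\lambda)^{-1}\bigr)=\bigl(\upsigma(A-\lambda)\bigr)^{-1}$ and then shifts back, whereas you prove the needed spectral correspondence by hand through the factorisation $A-\lambda=\bigl(\idop-(\lambda-\lambda_{0})R\bigr)(A-\lambda_{0})$ on $\dom A$ together with the Fredholm alternative for the compact operator $(\lambda-\lambda_{0})R$. Your route is more elementary and self-contained: it avoids invoking spectral mapping in the relation/unbounded setting (which the paper needs precisely to handle the point $0\in\upsigma(R)$ corresponding to $\infty$), and it produces the eigenvector explicitly, with the case $\mu=0$ excluded automatically because you only transport points $\lambda\neq\lambda_{0}$, i.e.\ $\mu=(\lambda-\lambda_{0})^{-1}\neq 0$. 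The paper's argument is shorter at the cost of appealing to a heavier external theorem; yours trades that citation for the bookkeeping in the factorisation (closedness of $A$, $\ran R=\dom A$), all of which you handle correctly, including the transport of the countability and non-accumulation statements through $\mu\mapsto\lambda_{0}+\mu^{-1}$.
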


\begin{proof}
  Let $\lambda \in \uprho(A)$. Then $(A-\lambda)^{-1}$ is a compact operator. Hence, the spectrum of $(A-\lambda)^{-1}$ contains only countable eigenvalues that can only accumulate at $0$. By the spectral mapping theorem (for unbounded operators or linear relations) we have
  \begin{equation*}
    \upsigma\big((A-\lambda)^{-1}\big) = \big(\upsigma(A-\lambda)\big)^{-1}.
  \end{equation*}
  Hence, the claim is true for $A-\lambda$. By shifting this operator by $\lambda$ and again apply the spectral mapping theorem, we conclude the claim also for $A$.
\end{proof}

Now in order to show $\ker (A - \iu \omega) = \set{0}$ we make the following observation for supposed eigenvalues.

\begin{lemma}\label{le:eigenvectors-zero-trace}
  If $\iu \omega \in \iu \R$ is an eigenvalue of $A$, then the corresponding eigenvector
  \(
  x =
  \begin{bsmallmatrix}
    \vec{D} \\ \vec{B}
  \end{bsmallmatrix}
  \)
  satisfies $\tantr[\Gamma_{1}] \epsilon^{-1} \vec{D} = 0$ and $\tanxtr[\Gamma_{1}] \mu^{-1} \vec{B} = 0$.
\end{lemma}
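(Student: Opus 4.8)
The plan is to exploit the dissipativity of the operator $A$ together with the Silver--Müller boundary condition on $\Gamma_1$. Since $A$ generates a contraction semigroup on the Hilbert space $\XH$ equipped with the inner product $\scprod{\cdot}{\cdot}_{\hamiltonian}$, it is dissipative: $\Re \scprod{A x}{x}_{\hamiltonian} \leq 0$ for all $x \in \dom A$. First I would compute this quadratic form explicitly for $x = \begin{bsmallmatrix}\vec D\\\vec B\end{bsmallmatrix} \in \dom A$, writing $\vec E = \epsilon^{-1}\vec D$ and $\vec H = \mu^{-1}\vec B$. Since $\scprod{x}{y}_{\hamiltonian} = \scprod{\hamiltonian x}{y}_{\Lp2(\Omega)}$ and $A_0 = \begin{bsmallmatrix}0&\rot\\-\rot&0\end{bsmallmatrix}\hamiltonian$, we get
\begin{align*}
  \scprod{A x}{x}_{\hamiltonian}
  = \scprod{\rot \vec H}{\vec E}_{\Lp2(\Omega)} - \scprod{\rot \vec E}{\vec H}_{\Lp2(\Omega)},
\end{align*}
and the integration-by-parts formula for $\rot$ (on the maximal domain, as recalled in \Cref{sec:preliminary} and detailed in \Cref{sec:background-diffoperator-and-traces}) turns the right-hand side into a boundary pairing. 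Because $\vec E \in \cH_{\Gamma_0}(\rot,\Omega)$, the tangential trace of $\vec E$ vanishes on $\Gamma_0$, so only the $\Gamma_1$-part of the boundary survives, giving an expression of the form $-\scprod{\tanxtr[\Gamma_1]\vec H}{\tantr[\Gamma_1]\vec E}_{\Lp2(\Gamma_1)}$ (up to sign/orientation conventions that I would fix carefully using the identities in the preliminaries).

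Next I would substitute the boundary condition $\tantr[\Gamma_1]\vec E + k\,\tanxtr[\Gamma_1]\vec H = 0$, i.e. $\tantr[\Gamma_1]\vec E = -k\,\tanxtr[\Gamma_1]\vec H$, which yields
\begin{align*}
  \Re \scprod{A x}{x}_{\hamiltonian} = -\scprod{k\,\tanxtr[\Gamma_1]\vec H}{\tanxtr[\Gamma_1]\vec H}_{\Lp2(\Gamma_1)} \leq -c^{-1}\norm{\tanxtr[\Gamma_1]\vec H}_{\Lp2(\Gamma_1)}^2,
\end{align*}
using the coercivity assumption $k \geq c^{-1}$. Now suppose $A x = \iu\omega x$ for some $\omega \in \R$ and $x \neq 0$. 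Then $\Re\scprod{Ax}{x}_{\hamiltonian} = \Re(\iu\omega \norm{x}_{\hamiltonian}^2) = 0$, which forces $\tanxtr[\Gamma_1]\vec H = \tanxtr[\Gamma_1]\mu^{-1}\vec B = 0$. Feeding this back into the boundary condition immediately gives $\tantr[\Gamma_1]\vec E = \tantr[\Gamma_1]\epsilon^{-1}\vec D = 0$ as well, which is exactly the claim.

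The only real subtlety is bookkeeping: making sure the integration-by-parts formula for $\rot$ applies to elements of $\dom A$ (which it does, since $\vec E \in \hH_{\partial\Omega}(\rot,\Omega) \cap \cH_{\Gamma_0}(\rot,\Omega)$ has an $\Lp2$ tangential trace on all of $\partial\Omega$, hence on $\Gamma_1$, and $\vec H \in \hH_{\Gamma_1}(\rot,\Omega)$ has an $\Lp2$ twisted tangential trace on $\Gamma_1$, so the boundary pairing is a genuine $\Lp2(\Gamma_1)$ inner product rather than a distributional duality), and getting the sign of the boundary term right so that dissipativity comes out with the correct orientation. I would also note that this argument simultaneously re-proves that $A$ is dissipative, but since that is already asserted in the construction of $A$, the present lemma is just the observation that equality in the dissipativity estimate (which must hold at an imaginary eigenvalue) pins down the traces.
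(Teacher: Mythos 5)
Your argument is correct and is essentially the paper's own proof: the paper likewise takes the real part of $\scprod{(A-\iu\omega)x}{x}_{\XH}=0$, rewrites $\Re\scprod{Ax}{x}_{\XH}$ as the boundary term $\Re\scprod{\tantr\epsilon^{-1}\vec{D}}{\tanxtr\mu^{-1}\vec{B}}_{\Lp{2}(\Gamma_{1})}$ via the integration-by-parts identity (\Cref{le:int-by-parts-for-A}), substitutes the feedback boundary condition, and uses the strict positivity of $k$ to force the $\Gamma_{1}$ traces to vanish. The only cosmetic difference is which trace you eliminate first via the boundary condition, which is immaterial.
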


\begin{proof}
  By \Cref{le:int-by-parts-for-A} and the boundary condition of $A$ ($\tantr \mu^{-1} \vec{D} = -k \tanxtr \epsilon^{-1} \vec{D}$ on $\Gamma_{1}$), we have
  \begin{align*}
    \Re \scprod{\smash[b]{\underbrace{(A-\iu \omega)x}_{=\mathrlap{0}}}}{x}_{\XH}
    &= \Re \scprod{Ax}{x}_{\XH} + \Re \iu \omega \scprod{x}{x}_{\XH} \\
    &= \Re \scprod{\tantr \epsilon^{-1} \vec{D}}{\tanxtr \mu^{-1} \vec{B}}_{\Lp{2}(\Gamma_{1})} \\
    &= \Re \scprod{\tantr \epsilon^{-1} \vec{D}}{-k \tantr \epsilon^{-1} \vec{D}}_{\Lp{2}(\Gamma_{1})} \\
    &= - \Re \norm{k^{\nicefrac{1}{2}} \tantr \epsilon^{-1} \vec{D}}_{\Lp{2}(\Gamma_{1})}.
  \end{align*}
  Hence, $\tantr[\Gamma_{1}] \epsilon^{-1} \vec{D} = 0$ and by the boundary condition we also have $\tanxtr[\Gamma_{1}] \mu^{-1} \vec{B} = 0$.
\end{proof}

Since the unique continuation principle from \Cref{sec:unique-continuation} only works for $\omega \neq 0$ we need to check $0 \in \uprho(A)$ separately.

\begin{proposition}\label{th:0-in-resolvent-set}
  $0 \in \uprho(A)$.
\end{proposition}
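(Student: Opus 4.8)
The plan is to show that $A$ is injective with dense range; since $A$ has compact resolvent (by \Cref{th:A-has-compact-resolvent}), injectivity alone already forces $0 \in \uprho(A)$ via \Cref{th:operator-with-compact-resolvent-only-eigenvalues}, so the real task is to prove $\ker A = \set{0}$. Suppose $\begin{bsmallmatrix}\vec{D}\\\vec{B}\end{bsmallmatrix} \in \dom A$ with $A\begin{bsmallmatrix}\vec{D}\\\vec{B}\end{bsmallmatrix} = 0$. Writing $\vec{E} = \epsilon^{-1}\vec{D}$, $\vec{H} = \mu^{-1}\vec{B}$, the equation $A\begin{bsmallmatrix}\vec{D}\\\vec{B}\end{bsmallmatrix}=0$ reads $\rot \vec{H} = 0$ and $\rot \vec{E} = 0$. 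First I would use the integration-by-parts identity (the analogue of \Cref{le:eigenvectors-zero-trace} with $\omega = 0$): pairing $0 = A\begin{bsmallmatrix}\vec{D}\\\vec{B}\end{bsmallmatrix}$ with $\begin{bsmallmatrix}\vec{D}\\\vec{B}\end{bsmallmatrix}$ in $\XH$ and taking real parts gives $\Re\norm{k^{1/2}\tantr[\Gamma_{1}]\vec{E}}^{2}_{\Lp2(\Gamma_1)} = 0$, hence $\tantr[\Gamma_{1}]\vec{E} = 0$ and, by the boundary condition defining $\dom A_0$, also $\tanxtr[\Gamma_{1}]\vec{H} = 0$.

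Next I would upgrade this to full boundary conditions. Combined with $\tantr[\Gamma_0]\vec{E} = 0$ (which holds because $\vec{E} \in \cH_{\Gamma_0}(\rot,\Omega)$ by the definition of $\dom A_0$), we get $\tantr[\partial\Omega]\vec{E} = 0$, so $\vec{E} \in \cH_{\partial\Omega}(\rot 0,\Omega)$. From $\begin{bsmallmatrix}\vec{D}\\\vec{B}\end{bsmallmatrix} \in \XH$ we have $\div\vec{D} = 0$ and $\div \vec{B} = 0$ with $\normaltr[\Gamma_0]\vec{B} = 0$; I would further argue that the twisted tangential trace condition $\tanxtr[\Gamma_{1}]\vec{H}=0$ together with the integration-by-parts formula for $\rot$ yields $\normaltr[\Gamma_1]\vec{B} = 0$ (since $\vec H\in \hH_{\Gamma_1}(\rot,\Omega)$, one has $\div\mu\vec H\in\Lp2$ and the normal component of $\vec B=\mu\vec H$ on $\Gamma_1$ is controlled; more directly, $\vec H\in\hH_{\Gamma_1}(\rot,\Omega)$ with $\tanxtr[\Gamma_1]\vec H = 0$ forces the full tangential trace to vanish on $\Gamma_1$, and combined with $\rot\vec H=0$, $\div\vec B=0$ one gets $\vec B\in\cH_{\Gamma_0}(\div0,\Omega)$ and in fact the normal trace on $\Gamma_1$ vanishes too, so $\vec B\in\cH_{\partial\Omega}(\div 0,\Omega)$). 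Thus $\vec D = \epsilon\vec E$ with $\vec E \in \cH_{\partial\Omega}(\rot0,\Omega)$ and $\vec D \in \Hspace(\div0,\Omega)$, i.e.\ $\vec D \in \cohom_{\epsilon,\partial\Omega,\emptyset}(\Omega)$; similarly $\vec B \in \cohom_{\mu,\Gamma_1,\Gamma_0}(\Omega)$.

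The conclusion is then immediate from the definition of the state space: $\XH$ was defined to contain the intersection with $\big(\cohom_{\epsilon,\partial\Omega,\emptyset}(\Omega)\times\cohom_{\mu,\Gamma_1,\Gamma_0}(\Omega)\big)^{\perp_{\hamiltonian}}$, so $\begin{bsmallmatrix}\vec{D}\\\vec{B}\end{bsmallmatrix}\in\XH$ together with $\begin{bsmallmatrix}\vec{D}\\\vec{B}\end{bsmallmatrix}\in\cohom_{\epsilon,\partial\Omega,\emptyset}(\Omega)\times\cohom_{\mu,\Gamma_1,\Gamma_0}(\Omega)$ forces $\begin{bsmallmatrix}\vec{D}\\\vec{B}\end{bsmallmatrix}=0$. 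Hence $\ker A = \set{0}$, and since $A$ is closed, densely defined with compact resolvent, $0$ cannot be an eigenvalue and therefore $0\in\uprho(A)$.

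I expect the main obstacle to be the bookkeeping in the second paragraph: carefully justifying that the vanishing \emph{twisted} tangential trace of $\vec H$ on $\Gamma_1$ (an object living in a trace space that is not a priori $\Lp2$) really does place $\vec B$ in the homology space $\cohom_{\mu,\Gamma_1,\Gamma_0}(\Omega)$ with the correct boundary-part labels — in particular getting the normal trace of $\vec B$ on $\Gamma_1$ to vanish. This requires invoking the duality between tangential and twisted-tangential traces and the integration-by-parts formula for $\rot$ on the maximal domain (as set up in \Cref{sec:background-diffoperator-and-traces}), rather than any hard PDE estimate; the rest is a direct consequence of the Helmholtz-type decompositions and the construction of $\XH$.
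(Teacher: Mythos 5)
Your first half coincides with the paper's argument: compactness of the resolvent (\Cref{th:A-has-compact-resolvent} together with \Cref{th:operator-with-compact-resolvent-only-eigenvalues}) reduces everything to $\ker A=\set{0}$, and your dissipativity computation is exactly \Cref{le:eigenvectors-zero-trace} applied with $\omega=0$, so $\tantr[\Gamma_{1}]\epsilon^{-1}\vec{D}=0=\tanxtr[\Gamma_{1}]\mu^{-1}\vec{B}$. The endgame, however, is where your route diverges and where it has a genuine gap. You want to place the kernel element \emph{inside} $\cohom_{\epsilon,\partial\Omega,\emptyset}(\Omega)\times\cohom_{\mu,\Gamma_{1},\Gamma_{0}}(\Omega)$ and then use the orthogonality built into $\XH$. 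For that you need $\epsilon^{-1}\vec{D}\in\cH_{\partial\Omega}(\rot 0,\Omega)$ and $\mu^{-1}\vec{B}\in\cH_{\Gamma_{1}}(\rot 0,\Omega)$, i.e.\ vanishing tangential traces in the sense of the trace spaces $\Vtau/\cVtau$, not merely vanishing weak $\Lp{2}_{\tau}(\Gamma_{1})$ traces. This upgrade is precisely the delicate point: from $\epsilon^{-1}\vec{D}\in\cH_{\Gamma_{0}}(\rot,\Omega)$ plus an $\Lp{2}$ trace vanishing on $\Gamma_{1}$ you cannot simply conclude membership in $\cH_{\partial\Omega}(\rot,\Omega)$, because a test function cannot in general be split into one supported away from $\Gamma_{0}$ and one away from $\Gamma_{1}$ -- this is exactly the mixed-boundary issue the paper flags as open in \eqref{eq:new-open-problem}. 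It can presumably be repaired here because the kernel element additionally lies in $\hH_{\partial\Omega}(\rot,\Omega)$, using the density results of \cite{SkPa23} quoted in \Cref{sec:background-diffoperator-and-traces} and the faithfulness of the dual pairing, but you assert the step rather than prove it. In addition, your intermediate claim that $\tanxtr[\Gamma_{1}]\mu^{-1}\vec{B}=0$ forces $\normaltr[\Gamma_{1}]\vec{B}=0$ (hence $\vec{B}\in\cH_{\partial\Omega}(\div 0,\Omega)$) is unjustified -- the tangential trace of a field does not control its normal trace -- although it is also unnecessary, since $\cohom_{\mu,\Gamma_{1},\Gamma_{0}}(\Omega)$ only requires $\normaltr[\Gamma_{0}]\vec{B}=0$, which you already have from $x\in\XH$.

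For comparison, the paper's proof is arranged so that none of these memberships are needed: it uses the identity $\XH=\rot\Hspace(\rot,\Omega)\times\rot\cH_{\Gamma_{0}}(\rot,\Omega)$ (via \Cref{th:div0-cap-cohomology}) and shows, by the integration-by-parts formula \Cref{le:int-by-parts-extended-to-max-dom} with the boundary terms killed by the vanishing $\Lp{2}_{\tau}(\Gamma_{1})$ traces, that $\vec{D}\perp_{\epsilon^{-1}}\rot\Hspace(\rot,\Omega)$ and $\vec{B}\perp_{\mu^{-1}}\rot\cH_{\Gamma_{0}}(\rot,\Omega)$; being both in these ranges and orthogonal to them, the kernel element vanishes. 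In other words, the paper exploits the ``range of $\rot$'' side of the Helmholtz decomposition, whereas you try to exploit the cohomology side; the two are dual to each other, but the paper's side only requires the boundary \emph{pairings} to vanish (which the $\Lp{2}$ information gives directly), while yours requires vanishing traces as elements of the boundary spaces, which is a strictly more delicate statement in the mixed-boundary setting. If you keep your route, you must supply the density/duality argument for $\epsilon^{-1}\vec{D}\in\cH_{\partial\Omega}(\rot,\Omega)$ and $\mu^{-1}\vec{B}\in\cH_{\Gamma_{1}}(\rot,\Omega)$ explicitly; otherwise I recommend switching the second half to the orthogonality argument above.
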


\begin{proof}
  Note that by \Cref{th:operator-with-compact-resolvent-only-eigenvalues} the spectrum $\upsigma(A)$ contains only eigenvalues.
  Hence, if we assume that $0 \not\in \uprho(A)$, then $0$ is an eigenvalue. Then by \Cref{le:eigenvectors-zero-trace} the corresponding eigenvector
  \(
  x
  =
  \begin{bsmallmatrix}
    \vec{D} \\ \vec{B}
  \end{bsmallmatrix}
  \)
  satisfies $\tantr[\Gamma_{1}] \epsilon^{-1} \vec{D} = 0 = \tanxtr[\Gamma_{1}] \mu^{-1} \vec{B}$. Combined with the remaining boundary condition for $\vec{D}$ we obtain $\tantr \epsilon^{-1} \vec{D} = 0$ on $\partial\Omega$. Thus, for $\phi \in \Hspace(\rot,\Omega)$, using $Ax = 0$, we obtain
  \begin{align*}
    0 = \scprod*{\smash[b]{\underbrace{\rot \epsilon^{-1} \vec{D}}_{=\mathrlap{0}}}}{\phi}_{\Lp{2}(\Omega)} = \scprod*{\epsilon^{-1} \vec{D}}{\rot \phi}_{\Lp{2}(\Omega)}.
    \vphantom{\underbrace{\rot}_{=0}} 
  \end{align*}
  Hence, $\vec{D} \perp_{\epsilon^{-1}} \rot \Hspace(\rot,\Omega)$ and since $\vec{D} \in \rot \Hspace(\rot,\Omega)$ (by $x \in \XH$), we conclude $\vec{D} = 0$.

  Similarly, for $\psi \in \cH_{\Gamma_{0}}(\rot,\Omega)$ we have
  \begin{align*}
    0 = \scprod*{\rot \mu^{-1} \vec{B}}{\psi}_{\Lp{2}(\Omega)}
    = \scprod{\mu^{-1} \vec{B}}{\rot \psi}_{\Lp{2}(\Omega)},
  \end{align*}
  which implies that $\vec{B} \perp_{\mu^{-1}} \ran \cH_{\Gamma_{0}}(\rot,\Omega)$. Since $\vec{B} \in \rot \cH_{\Gamma_{0}}(\rot,\Omega)$ we conclude $\vec{B} = 0$, which leads to $x = 0$. Therefore, $0$ is not an eigenvalue and $0 \in \uprho(A)$.
\end{proof}

Now we have collected all the tools to prove that the dynamic part of Maxwell's equation is semi-uniformly stable.

\begin{theorem}\label{th:semigroup-semi-uniformly-stable}
  The operator $A$ (from \Cref{def:differential-operator-A}) generates a semi-uniformly stable semigroup $(T(t))_{t\geq 0}$.
\end{theorem}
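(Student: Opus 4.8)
The strategy is to invoke \Cref{th:equivalences-of-semi-uniform-stability}: it suffices to show $\iu\R \subseteq \uprho(A)$, i.e.\ to solve \Cref{pro:reduced-question}. The case $\omega = 0$ is already handled by \Cref{th:0-in-resolvent-set}, so fix $\omega \in \R \setminus \set{0}$. Since $A$ has compact resolvent by \Cref{th:A-has-compact-resolvent}, \Cref{th:operator-with-compact-resolvent-only-eigenvalues} tells us that $\iu\omega \in \upsigma(A)$ would force $\iu\omega$ to be an eigenvalue; so it is enough to prove $\ker(A - \iu\omega) = \set{0}$.

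Let $x = \begin{bsmallmatrix}\vec{D}\\\vec{B}\end{bsmallmatrix} \in \dom A$ with $Ax = \iu\omega x$, and set $\vec{E} = \epsilon^{-1}\vec{D}$, $\vec{H} = \mu^{-1}\vec{B}$. By \Cref{le:eigenvectors-zero-trace} we have $\tantr[\Gamma_1]\vec{E} = 0$ and $\tanxtr[\Gamma_1]\vec{H} = 0$; together with the built-in condition $\tantr[\Gamma_0]\vec{E}=0$ this gives $\tantr\vec{E} = 0$ on all of $\partial\Omega$, and $\vec{H} \in \hH_{\Gamma_1}(\rot,\Omega)$ with vanishing twisted tangential trace on $\Gamma_1$ means $\vec{H} \in \cH_{\Gamma_1}(\rot,\Omega)$ in the appropriate sense. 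The eigenvalue equation unpacks to the stationary Maxwell system $\rot\vec{H} = \iu\omega\,\epsilon\vec{E}$ and $-\rot\vec{E} = \iu\omega\,\mu\vec{H}$ on $\Omega$; moreover, since $x \in \XH$, we have $\div\vec{D} = 0$ and $\div\vec{B} = 0$. Eliminating, one gets $\rot(\mu^{-1}\rot\vec{E}) = \omega^2\epsilon\vec{E}$, so $(\vec{E},\vec{H})$ solve a homogeneous anisotropic Maxwell eigenvalue problem with \emph{overdetermined} (both tangential traces vanish on part of the boundary, and here in fact the full tangential trace of $\vec{E}$ vanishes everywhere) homogeneous boundary data. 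The plan is to feed this into the unique continuation principle announced in \Cref{sec:unique-continuation}: the Lipschitz regularity of $\epsilon$ and $\mu$ is exactly what makes that principle applicable, and it forces $\vec{E} = 0$ (and hence $\vec{H} = 0$) on $\Omega$. This yields $x = 0$.

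The main obstacle — the genuine analytic heart of the argument — is precisely the unique continuation step: one must pass from "Cauchy data of $\vec E$ vanish on an open piece $\Gamma_1$ of the boundary" to "$\vec E \equiv 0$ in $\Omega$." This requires extending $\vec E$ by zero across $\Gamma_1$ to a slightly larger domain (using that $\tantr\vec E$ and, via the equations on $\Gamma_1$, also $\tanxtr\vec H$ vanish there, so the extension still solves the Maxwell system weakly), then invoking a Carleman-estimate-based unique continuation theorem for second-order elliptic systems with Lipschitz coefficients to conclude $\vec E$ vanishes in a neighbourhood of $\Gamma_1$, and finally propagating this to all of the connected $\Omega$ by a standard connectedness/chain-of-balls argument. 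The remaining steps are bookkeeping: once $\vec E = \vec H = 0$ we are done, and assembling \Cref{th:equivalences-of-semi-uniform-stability} with \Cref{th:0-in-resolvent-set} and the $\omega\neq 0$ case gives $\iu\R \subseteq \uprho(A)$, whence $T$ is semi-uniformly stable. I would organize the write-up so that the unique continuation input is quoted as a black box from \Cref{sec:unique-continuation} (Lemma to be stated there), and this proof merely verifies its hypotheses — chiefly the vanishing of the full boundary Cauchy data on $\Gamma_1$ obtained above from \Cref{le:eigenvectors-zero-trace}.
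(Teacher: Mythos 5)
Your proposal is correct and follows essentially the same route as the paper: reduce via \Cref{th:equivalences-of-semi-uniform-stability} to showing $\iu\R\subseteq\uprho(A)$, use the compact resolvent (\Cref{th:A-has-compact-resolvent}, \Cref{th:operator-with-compact-resolvent-only-eigenvalues}) to reduce to excluding imaginary eigenvalues, apply \Cref{le:eigenvectors-zero-trace} to get vanishing tangential traces on $\Gamma_{1}$, invoke the unique continuation results of \Cref{sec:unique-continuation} for $\omega\neq 0$, and settle $\omega=0$ by \Cref{th:0-in-resolvent-set}. The extension-by-zero across $\Gamma_{1}$ and the chain-of-balls propagation you sketch are exactly the content of the appendix (the proposition following \Cref{th:principle-of-unique-continuation}), so treating that step as a quoted black box is precisely what the paper does.
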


\begin{proof}
  By \Cref{th:equivalences-of-semi-uniform-stability} we have already reduced the question to \Cref{pro:reduced-question}, i.e., we have to show $\iu \R \subseteq \uprho(A)$.
  By \Cref{th:A-has-compact-resolvent}, $A$ has a compact resolvent, which implies that the spectrum $\upsigma(A)$ contains only eigenvalues (\Cref{th:operator-with-compact-resolvent-only-eigenvalues}).

  If $\iu w \in \iu \R \setminus \set{0}$ is an eigenvalue of $A$, then by \Cref{le:eigenvectors-zero-trace} the corresponding eigenvector has vanishing tangential trace on $\Gamma_{1}$.
  From the principle of unique continuation (\Cref{th:principle-of-unique-continuation}) follows that this eigenvector can only be $0$.
  Hence, there are no non-zero imaginary eigenvalues and consequently $\iu \R \setminus \set{0} \subseteq \uprho(A)$.

  Finally, \Cref{th:0-in-resolvent-set} yields $0 \in \uprho(A)$. Thus $\iu \R \subseteq \uprho(A)$ and the semigroup that is generated by $A$ is semi-uniformly stable.
  %
\end{proof}

Since every solution of the system~\eqref{eq:maxwells-eq} can be decomposed into an equilibrium and a dynamic part (that solves the abstract Cauchy problem corresponding to $A$), we conclude the main theorem.

\maintheorem*

Recall that
\begin{align*}
  \begin{bmatrix}
    \cohom_{\epsilon,\partial\Omega,\emptyset}(\Omega) \\ \cohom_{\mu,\Gamma_{1},\Gamma_{0}}(\Omega)
  \end{bmatrix}
  =
  \hamiltonian^{-1}
  \begin{bmatrix}
    \cohom_{\partial\Omega,\emptyset,\epsilon}(\Omega) \\ \cohom_{\Gamma_{1},\Gamma_{0},\mu}(\Omega)
  \end{bmatrix}
  \quad\text{and}\quad
  x \perp_{\hamiltonian^{-1}} y
  \;\Leftrightarrow\;
  \hamiltonian^{-1} x \perp_{\hamiltonian} \hamiltonian^{-1} y.
\end{align*}

\begin{proof}
  Let
  \(
  \begin{bsmallmatrix}
    \vec{E}_{\mathrm{e}} \\ \vec{H}_{\mathrm{e}}
  \end{bsmallmatrix}
  \)
  be a solution of~\eqref{eq:maxwell-static} with $h = \tanxtr \vec{H}$, i.e., an equilibrium state. Then we define the initial value for the ``dynamic'' part by
  \begin{align*}
    \begin{bmatrix}
      \vec{D}_{0} \\ \vec{B}_{0}
    \end{bmatrix}
    \coloneqq
    \underbrace{\begin{bmatrix}
      \epsilon & 0 \\
      0 & \mu
    \end{bmatrix}}_{=\mathrlap{\hamiltonian^{-1}}}
    \begin{bmatrix}
      \vec{E}_{0} - \vec{E}_{\mathrm{e}} \\
      \vec{H}_{0} - \vec{H}_{\mathrm{e}}
    \end{bmatrix}.
  \end{align*}
  Note that
  \(
  \begin{bsmallmatrix}
    \vec{E}_{\mathrm{e}} \\ \vec{H}_{\mathrm{e}}
  \end{bsmallmatrix}
  \)
  is in general---depending on the cohomology groups---not unique. Moreover,
  \(
  \begin{bsmallmatrix}
    \vec{D}_{0} \\ \vec{B}_{0}
  \end{bsmallmatrix}
  \)
  satisfies $\div \vec{D}_{0} = \div \vec{B}_{0} = 0$ and $\normaltr[\Gamma_{0}] \vec{B}_{0} = 0$, but is not necessarily in
  \(
  \begin{bsmallmatrix}
    \cohom_{\epsilon,\partial\Omega,\emptyset}(\Omega) \\ \cohom_{\mu,\Gamma_{1},\Gamma_{0}}(\Omega)
  \end{bsmallmatrix}^{\perp_{\hamiltonian}}
  \).
  Hence, we shift
  \(
  \begin{bsmallmatrix}
    \vec{E}_{\mathrm{e}} \\ \vec{H}_{\mathrm{e}}
  \end{bsmallmatrix}
  \)
  by the
  \(
  \begin{bsmallmatrix}
    \cohom_{\epsilon,\partial\Omega,\emptyset}(\Omega) \\ \cohom_{\mu,\Gamma_{1},\Gamma_{0}}(\Omega)
  \end{bsmallmatrix}
  \)
  part of
  \(
  \begin{bsmallmatrix}
    \vec{E}_{0} - \vec{E}_{\mathrm{e}} \\ \vec{H}_{0} - \vec{H}_{\mathrm{e}}
  \end{bsmallmatrix}
  \)
  w.r.t.\ $\scprod{\cdot}{\cdot}_{\hamiltonian^{-1}}$ such that
  \(
  \begin{bsmallmatrix}
    \vec{E}_{0} - \vec{E}_{\mathrm{e}} \\ \vec{H}_{0} - \vec{H}_{\mathrm{e}}
  \end{bsmallmatrix}
  \in
  \begin{bsmallmatrix}
    \cohom_{\partial\Omega,\emptyset,\epsilon}(\Omega) \\ \cohom_{\Gamma_{1},\Gamma_{0},\mu}(\Omega)
  \end{bsmallmatrix}^{\perp_{\hamiltonian^{-1}}}
  \)
  and consequently (the updated)
  \(
  \begin{bsmallmatrix}
    \vec{D}_{0} \\ \vec{B}_{0}
  \end{bsmallmatrix}
  \)
  is in
  \(
  \begin{bsmallmatrix}
    \cohom_{\epsilon,\partial\Omega,\emptyset}(\Omega) \\ \cohom_{\mu,\Gamma_{1},\Gamma_{0}}(\Omega)
  \end{bsmallmatrix}^{\perp_{\hamiltonian}}
  \).
  Hence,
  \(
  \begin{bsmallmatrix}
    \vec{D}_{0} \\ \vec{B}_{0}
  \end{bsmallmatrix}
  \in \XH
  \)
  and even in $\dom A$.

  We denote the semi-uniformly stable semigroup that is generated by $A$ by $(T(t))_{t \geq 0}$ and the decay function by $f$ (\Cref{th:semigroup-semi-uniformly-stable}). Hence, the solution
  \(
    \begin{bsmallmatrix}
      \vec{D} \\ \vec{B}
    \end{bsmallmatrix}
    (t,\cdot)
    \coloneqq
    T(t)
    \begin{bsmallmatrix}
      \vec{D}_{0} \\ \vec{B}_{0}
    \end{bsmallmatrix}
  \)
  of the dynamic part satisfies
  \begin{align*}
    \norm*{
    \begin{bmatrix}
      \vec{D} \\ \vec{B}
    \end{bmatrix}
    (t,\cdot)
    }_{\Lp{2}(\Omega)}
    \leq
    f(t)
    \norm*{
    \begin{bmatrix}
      \vec{D}_{0} \\ \vec{B}_{0}
    \end{bmatrix}
    }_{\dom A}
  \end{align*}
  The solution of~\eqref{eq:maxwells-eq} is then given by
  \begin{align*}
    \begin{bmatrix}
      \vec{E} \\ \vec{H}
    \end{bmatrix}
    (t,\cdot)
    =
    \begin{bmatrix}
      \epsilon^{-1} \vec{D} \\ \mu^{-1} \vec{B}
    \end{bmatrix}
    (t,\cdot)
    +
    \begin{bmatrix}
      \vec{E}_{\mathrm{e}} \\ \vec{H}_{\mathrm{e}}
    \end{bmatrix}.
  \end{align*}
  Note that $\hamiltonian = \begin{bsmallmatrix} \epsilon^{-1} & 0 \\ 0 & \mu^{-1} \end{bsmallmatrix}$ is a bounded and boundedly invertible operator, which implies that $c^{-1}\norm{x}_{\Lp{2}(\Omega)} \leq \norm{\hamiltonian x}_{\Lp{2}(\Omega)} \leq c \norm{x}_{\Lp{2}(\Omega)}$ for a suitable $c > 0$. Therefore,
  \begin{align*}
    \MoveEqLeft
    \norm*{
    \begin{bmatrix}
      \vec{E} \\ \vec{H}
    \end{bmatrix}
    (t,\cdot)
    -
    \begin{bmatrix}
      \vec{E}_{\mathrm{e}} \\ \vec{H}_{\mathrm{e}}
    \end{bmatrix}
    }_{\Lp{2}} \\
    &=
    \norm*{
    \begin{bmatrix}
      \epsilon^{-1} \vec{D} \\ \mu^{-1} \vec{B}
    \end{bmatrix}
    }_{\Lp{2}}
    \leq c
    \norm*{
    \begin{bmatrix}
      \vec{D} \\ \vec{B}
    \end{bmatrix}
      }_{\Lp{2}}
    \leq c f(t)
      \left(
      \norm*{
      \begin{bmatrix}
        \vec{D}_{0} \\ \vec{B}_{0}
      \end{bmatrix}
      }_{\Lp{2}}
      +
      \norm*{
      \begin{bmatrix}
        \rot \vec{E}_{0} \\ \rot \vec{H}_{0}
      \end{bmatrix}
      }_{\Lp{2}}
      \right)
    \\
    &\leq
      c(c+1)f(t)
      \left(
      \norm*{
      \begin{bmatrix}
        \vec{E}_{0} \\ \vec{H}_{0}
      \end{bmatrix}
      -
      \begin{bmatrix}
        \vec{E}_{\mathrm{e}} \\ \vec{H}_{\mathrm{e}}
      \end{bmatrix}
      }_{\Lp{2}}
      +
      \norm*{
      \begin{bmatrix}
        \rot \vec{E}_{0} \\ \rot \vec{H}_{0}
      \end{bmatrix}
      }_{\Lp{2}}
      \right).
  \end{align*}
  Redefining $f$ as $c(c+1)f$ finishes the proof.
\end{proof}


\section{Conclusion}

We have shown that Maxwell's equation without damping terms can be semi-uniformly stabilized by a simple boundary feedback. This was done without any geometric assumptions on the boundary like the geometric control condition, convexity, star-shapedness, etc.\ and with only mild conditions on the matrix-valued functions $\epsilon$ and $\mu$, i.e., strict positivity and Lipschitz continuity.

The key ingredients were a compact resolvent and a unique continuation principle.
The other arguments do not really depend on our particular differential operator, but can also be done for an entire class of systems. In particular the port-Hamiltonian systems that are discussed in \cite{Sk21,Sk-Phd}. In fact, if there were a generalization of the compact resolvent and the unique continuation principle for those port-Hamiltonian systems, we could conclude the same semi-uniform stability result.

\appendix

\section{Unique continuation}\label{sec:unique-continuation}

For $\epsilon$ and $\mu$ as in the beginning and $\omega \neq 0$ we regard the following stationary system,
\begin{align}\label{eq:eigenvector-equation}
  \begin{split}
    \rot \vec{H} &= \iu \omega \epsilon \vec{E}, \\
    \rot \vec{E} &= - \iu \omega \mu \vec{H}.
  \end{split}
\end{align}

Recall that $\epsilon$ and $\mu$ are strictly positive matrix-valued functions. In particular, there exists a $c > 0$ such that
\begin{align*}
  c^{-1} \leq \epsilon(\zeta) \leq c
  \quad\text{and}\quad
  c^{-1} \leq \mu(\zeta) \leq c
  \quad\text{for all}\quad \zeta \in \Omega
\end{align*}
and there exists a $C > 0$ such that
\begin{align*}
  \norm{\epsilon}_{\sobolev{1,\infty}(\Omega)}
  + \norm{\mu}_{\sobolev{1,\infty}(\Omega)}
  \leq C,
\end{align*}
where $\sobolev{1,\infty}(\Omega)$ denotes the Sobolev space of Lipschitz continuous functions.

The next theorem is from~\cite[Thm.~1.1]{NgWa12}.
\begin{theorem}\label{th:3-ball-estimate}
  Let $\vec{H}, \vec{E} \in \Lp{2}_{\textup{loc}}(\Omega)$ be a solution of~\eqref{eq:eigenvector-equation}.
  Then there exist $\rho, s >0$ such that for $r_{0} < r_{1} < r_{2}/2 < \rho$ with $\ball_{r_{2}}(x_{0}) \subseteq \Omega$, we have
  \begin{align*}
    \int_{\ball_{r_{1}}(x_{0})}
    \norm[\big]{
    \begin{bsmallmatrix}
      \vec{E} \\ \vec{H}
    \end{bsmallmatrix}}^{2}
    \dx[\uplambda]
    \leq
    C
    \left(
    \int_{\ball_{r_{0}}(x_{0})}
    \norm[\big]{
    \begin{bsmallmatrix}
      \vec{E} \\ \vec{H}
    \end{bsmallmatrix}}^{2}
    \dx[\uplambda]
    \right)^{\tau}
    \left(
    \int_{\ball_{r_{2}}(x_{0})}
    \norm[\big]{
    \begin{bsmallmatrix}
      \vec{E} \\ \vec{H}
    \end{bsmallmatrix}}^{2}
    \dx[\uplambda]
    \right)^{1-\tau},
  \end{align*}
  where $C, \tau > 0$ depend on $\epsilon,\mu,r_{1},r_{2},s$.
\end{theorem}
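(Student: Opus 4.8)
This is the three--ball inequality of Nguyen--Wang, and I sketch the route one would take to prove it. The first step is to pass from the first--order system \eqref{eq:eigenvector-equation} to a second--order elliptic one. Taking divergences and using $\div\rot = 0$ yields the constraints $\div(\epsilon\vec{E}) = 0$ and $\div(\mu\vec{H}) = 0$. Substituting $\vec{H} = \tfrac{\iu}{\omega}\mu^{-1}\rot\vec{E}$ into the first equation gives the curl--curl equation $\rot(\mu^{-1}\rot\vec{E}) = \omega^{2}\epsilon\vec{E}$, and symmetrically $\rot(\epsilon^{-1}\rot\vec{H}) = \omega^{2}\mu\vec{H}$. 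The bare curl--curl operator is not elliptic, but adding a gradient--of--divergence term makes it so: writing $\rot\rot = -\Delta + \grad\div$ and using the two constraints to express $\div\vec{E}$ and $\div\vec{H}$ through the (Lipschitz) coefficients and the \emph{undifferentiated} fields, one obtains for $\vec{u} = \begin{bsmallmatrix}\vec{E}\\\vec{H}\end{bsmallmatrix}$ a divergence--form system $\div(a\grad\vec{u}) = F$ with $a$ uniformly elliptic and Lipschitz (built from $\epsilon,\mu$) and $\abs{F} \le C_{0}(\abs{\grad\vec{u}} + \abs{\vec{u}})$, where $C_{0}$ depends only on $c$, on $\norm{\epsilon}_{\sobolev{1,\infty}(\Omega)} + \norm{\mu}_{\sobolev{1,\infty}(\Omega)}$ and on $\omega$.

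The second step is an interior Carleman estimate for such systems: there are $\rho, s > 0$ and $C > 0$ so that, with a radial weight $\varphi$ of the form $\varphi(x) = (\abs{x - x_{0}}^{2})^{-s/2}$ (or $-\log\abs{x-x_{0}}$) regularised to be pseudoconvex on $\ball_{2\rho}(x_{0})$, one has, writing $\lambda$ for the large parameter to avoid clashing with the exponent $\tau$ of the statement,
\[
  \lambda^{3}\!\int e^{2\lambda\varphi}\abs{\vec{v}}^{2} + \lambda\!\int e^{2\lambda\varphi}\abs{\grad\vec{v}}^{2} \le C\!\int e^{2\lambda\varphi}\abs{\div(a\grad\vec{v})}^{2}
\]
for all $\vec{v} \in \Cc(\ball_{2\rho}(x_{0})\setminus\set{x_{0}})$ and all $\lambda$ large. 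This is exactly the step where only $\sobolev{1,\infty}$ regularity of the leading coefficients is available, so the smooth--coefficient pseudodifferential calculus is unavailable; instead one proves the estimate by Rellich--Ne\v{c}as integration--by--parts identities, or deduces it from Almgren--type frequency monotonicity \`a la Garofalo--Lin. \emph{This is the main obstacle.} The gradient term on the left is precisely what lets one absorb the genuinely first--order part of $F$, at the cost of forcing $\lambda$ above a threshold fixed by $C$ and $C_{0}$.

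Finally one runs the standard localisation. Fix $r_{0} < r_{1} < r_{2}/2 < \rho$ with $\ball_{r_{2}}(x_{0}) \subseteq \Omega$ and a cutoff $\chi \in \Cc(\ball_{r_{2}}(x_{0})\setminus\set{x_{0}})$ equal to $1$ on $\ball_{r_{1}}(x_{0})\setminus\ball_{r_{0}}(x_{0})$ with $\grad\chi$ supported in $(\ball_{r_{0}}\setminus\ball_{r_{0}/2})\cup(\ball_{r_{2}}\setminus\ball_{r_{1}})$. Applying the Carleman estimate to $\vec{v} = \chi\vec{u}$ and using $\div(a\grad(\chi\vec{u})) = \chi F + [\text{commutator}]$, with the commutator supported where $\grad\chi \ne 0$ and $\chi F$ absorbed by the left--hand side for $\lambda$ large after a Caccioppoli inequality on a slightly larger ball (which is what removes all gradients from the final bound), one arrives at
\[
  \int_{\ball_{r_{1}}(x_{0})}\abs{\vec{u}}^{2} \le C\Big(e^{2\lambda(\varphi(r_{0})-\varphi(r_{1}))}\!\int_{\ball_{r_{0}}(x_{0})}\abs{\vec{u}}^{2} + e^{-2\lambda(\varphi(r_{1})-\varphi(r_{2}))}\!\int_{\ball_{r_{2}}(x_{0})}\abs{\vec{u}}^{2}\Big)
\]
for all large $\lambda$, $\varphi$ being decreasing in the radius. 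Optimising over $\lambda$ balances the two exponentials and turns the sum into a product with exponent $\theta \in (0,1)$ depending on $r_{0},r_{1},r_{2},s$; renaming $\theta$ as $\tau$ gives the asserted inequality with $C$ carrying the stated dependence on $\epsilon,\mu,r_{1},r_{2},s$. (If the optimal $\lambda$ would fall below the Carleman threshold, the inequality is trivial after enlarging $C$, and $\rho$ is taken small enough, in terms of the Lipschitz norms of $\epsilon,\mu$, that the weight is genuinely pseudoconvex on $\ball_{2\rho}(x_{0})$.)
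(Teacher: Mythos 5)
The paper offers no proof of \Cref{th:3-ball-estimate} to compare against: the result is imported verbatim from \cite[Thm.~1.1]{NgWa12}, and the authors' ``proof'' is that citation. Your plan does follow the general strategy behind such results---augment Maxwell's system by the divergence constraints, pass to a second-order system, establish a Carleman-type (or frequency-function) estimate that survives merely Lipschitz leading coefficients, then run the cutoff-and-optimise argument to convert it into a three-ball inequality with the weight parameter $s$ entering through $\varphi$---so the outline is aligned with the cited literature.

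As a proof, however, it has genuine gaps. First, the reduction is only asserted and, as written, is really a scalar-coefficient argument: for matrix-valued $\mu$ the identity $\rot\rot=-\Delta+\grad\div$ does not turn $\rot(\mu^{-1}\rot\vec{E})=\omega^{2}\epsilon\vec{E}$ into a divergence-form system with uniformly elliptic principal part plus first-order remainder, and the constraint $\div(\epsilon\vec{E})=0$ gives a relation among the entries of $\grad\vec{E}$ weighted by $\epsilon$, not an expression for $\div\vec{E}$ in terms of undifferentiated fields; taming this anisotropic augmentation is precisely the nontrivial content of \cite{NgWa12}. Second, the interior Carleman estimate for systems with $\sobolev{1,\infty}$ leading coefficients, which you correctly flag as ``the main obstacle,'' is the analytic heart of the theorem and is only pointed at (Rellich--Ne\v{c}as identities, Garofalo--Lin monotonicity), not proved; granting it, the localisation and optimisation over the large parameter are indeed routine. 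A smaller omission: since only $\vec{E},\vec{H}\in\Lp{2}_{\textup{loc}}(\Omega)$ is assumed, you must first invoke interior regularity from the augmented system before $\grad\vec{u}$ and the Caccioppoli step make sense. In short, your sketch is a reasonable reconstruction of how the cited theorem is proved, but as it stands it defers exactly the two ingredients that constitute the proof; the paper's own resolution is simply to cite \cite{NgWa12}, and that remains the economical option here.
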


\begin{theorem}[Principle of unique continuation]%
  \label{th:principle-of-unique-continuation}
  Let $\Omega$ be connected and $\vec{E}, \vec{H} \in \Lp{2}_{\textup{loc}}(\Omega)$ be a solution of~\eqref{eq:eigenvector-equation}
  such that
  \(
  \begin{bsmallmatrix}
    \vec{E} \\ \vec{H}
  \end{bsmallmatrix}\big\vert_{U}
  = 0
  \)
  for a non-empty open set $U \subseteq \Omega$. Then
  \(
  \begin{bsmallmatrix}
    \vec{E} \\ \vec{H}
  \end{bsmallmatrix}
  = 0
  \).
\end{theorem}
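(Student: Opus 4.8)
The plan is to run the classical \emph{propagation-of-zeros} argument, deducing global vanishing from the local three-ball estimate of \Cref{th:3-ball-estimate} together with the connectedness of $\Omega$. First I would introduce
\[
  V \coloneqq \bigcup \dset[\big]{W \subseteq \Omega \text{ open}}{\vec{E} = 0 = \vec{H} \text{ a.e.\ on } W},
\]
the largest open subset of $\Omega$ on which $(\vec{E},\vec{H})$ vanishes almost everywhere. By hypothesis $\emptyset \neq U \subseteq V$, and $V$ is open by construction; since $\Omega$ is connected, it then suffices to prove that $V$ is relatively closed in $\Omega$, as this forces $V = \Omega$ and hence the claim.

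For the closedness I would take an arbitrary $x_{0}$ in the closure of $V$ relative to $\Omega$, fix $\delta > 0$ with $\overline{\ball_{2\delta}(x_{0})} \subseteq \Omega$, and let $\rho, s > 0$ be the radii furnished by \Cref{th:3-ball-estimate}. Since $x_{0}$ lies in the closure of $V$, I can pick $x_{1} \in V$ with $\abs{x_{1}-x_{0}} < \min\set{\delta/8,\rho/4}$; then I set $r_{2} \coloneqq \min\set{\delta/2,\rho}$, so that $r_{2}/2 < \rho$ and $\overline{\ball_{r_{2}}(x_{1})} \subseteq \ball_{2\delta}(x_{0}) \subseteq \Omega$, choose $r_{1} \in (\abs{x_{1}-x_{0}}, r_{2}/2)$ (a nonempty interval by the choice of $x_{1}$), and choose $r_{0} \in (0,r_{1})$ small enough that $\ball_{r_{0}}(x_{1}) \subseteq V$ (possible since $x_{1}$ is an interior point of the vanishing set). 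Applying \Cref{th:3-ball-estimate} with center $x_{1}$ and radii $r_{0} < r_{1} < r_{2}/2 < \rho$, the middle quantity $\int_{\ball_{r_{1}}(x_{1})} \norm{(\vec{E},\vec{H})}^{2}\dx[\uplambda]$ is bounded by a constant times $\bigl(\int_{\ball_{r_{0}}(x_{1})} \norm{(\vec{E},\vec{H})}^{2}\dx[\uplambda]\bigr)^{\tau}\bigl(\int_{\ball_{r_{2}}(x_{1})} \norm{(\vec{E},\vec{H})}^{2}\dx[\uplambda]\bigr)^{1-\tau}$; the first factor is $0$ because $\ball_{r_{0}}(x_{1}) \subseteq V$, the second is finite because $\overline{\ball_{r_{2}}(x_{1})}$ is a compact subset of $\Omega$ and $\vec{E},\vec{H}\in\Lp{2}_{\mathrm{loc}}(\Omega)$, and $\tau > 0$, so the middle quantity vanishes. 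Hence $(\vec{E},\vec{H}) = 0$ a.e.\ on the open ball $\ball_{r_{1}}(x_{1})$, which by maximality gives $\ball_{r_{1}}(x_{1}) \subseteq V$, and since $x_{0} \in \ball_{r_{1}}(x_{1})$ we conclude $x_{0} \in V$. Thus $V$ is closed in $\Omega$, and connectedness yields $V = \Omega$.

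The only genuine input is the three-ball inequality \Cref{th:3-ball-estimate} (itself a consequence of a Carleman-type estimate for the stationary Maxwell system with Lipschitz anisotropic coefficients, quoted from \cite{NgWa12}); granted that, the argument above is entirely routine. The one point that needs a little care is the bookkeeping of the radii near $\partial\Omega$: one must ensure $\overline{\ball_{r_{2}}(x_{1})} \subseteq \Omega$ uniformly as $x_{1}$ ranges over points of $V$ close to $x_{0}$, which is precisely why I anchor everything to the fixed ball $\ball_{2\delta}(x_{0})$. I expect no other obstacle.
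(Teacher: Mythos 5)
Your proposal is correct and follows essentially the same route as the paper: an open--closed (connectedness) argument in which the three-ball estimate of \Cref{th:3-ball-estimate} propagates the vanishing from a small ball to a larger one containing the limit point, the only difference being cosmetic bookkeeping (maximal open vanishing set and a closure point versus the paper's set $M$ of points with a vanishing neighbourhood and a convergent sequence, with slightly different radii choices).
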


\begin{proof}
  We define
  \begin{align*}
    M \coloneqq \dset*{\zeta \in \Omega}{\exists r>0 \ \text{s.t.}\
    \begin{bsmallmatrix}
      \vec{E} \\ \vec{H}
    \end{bsmallmatrix}
    \big\vert_{\ball_{r}(\zeta)}
    = 0
    }
  \end{align*}
  and show that $M$ is open and closed in $\Omega$.

  \medskip
  Clearly, $M$ is open in $\Omega$: By definition, for $\zeta_{0} \in M$ there exists an $r_{0}>0$ such that
  \(
  \begin{bsmallmatrix}
    \vec{E} \\
    \vec{H}
  \end{bsmallmatrix}\big\vert_{\ball_{r_{0}}(\zeta_{0})}
  = 0
  \).
  Hence, for every $\zeta \in \ball_{r_{0}}(\zeta_{0})$ we have
  \(
  \begin{bsmallmatrix}
    \vec{E} \\
    \vec{H}
  \end{bsmallmatrix}\big\vert_{\ball_{r_{0} - \abs{\zeta - \zeta_{0}}}(\zeta)}
  = 0
  \),
  which implies $\zeta \in M$ and therefore $\ball_{r_{0}}(\zeta_{0}) \subseteq M$.

  \begin{figure}[h]
    \centering
    \begin{tikzpicture}[scale=0.7]
      \coordinate[label=below:$\zeta$] (xi) at (0,0);
      \coordinate[label=below:$\zeta_{n_{0}}$] (xi0) at ($(xi) + 1.2*(1,0)$);
      \filldraw (xi) circle (0.1em);
      \filldraw (xi0) circle (0.1em);

      \draw (xi) circle (6);
      \draw (xi0) circle (2.4);
      \draw (xi0) circle (1);
      \draw (xi0) circle (4.2);

      \draw[] (xi) -- node[above,xshift=-5] {$r$} ($(xi) + 6*(-1,0)$);
      \draw[] (xi0) -- node[right=-2] {$r_{0}$} ($(xi0) + 1*(0,1)$);
      \draw[] (xi0) -- node[right=-2,yshift=3] {$r_{1}$} ($(xi0) + 2.4*({cos(120)},{sin(120)})$);
      \draw[] (xi0) -- node[above,xshift=-7,yshift=5] {$r_{2}$} ($(xi0) + 4.2*({cos(150)},{sin(150)})$);
      \draw[thick] plot[smooth,tension=0.5] coordinates {(-5.5,-6) (-5.5,-3) (-6.3,0) (-5,6)};
      \node at (-5.9,-3) {$\partial \Omega$};
    \end{tikzpicture}
    \caption{Illustration of the proof of \Cref{th:principle-of-unique-continuation}}
    \label{fig:proof-unique-continuation}
  \end{figure}
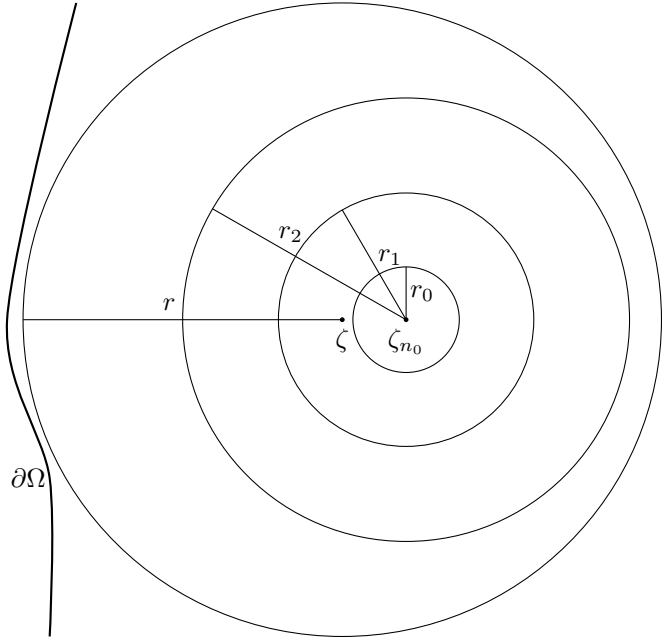

  \medskip
  On the other hand: Let $(\zeta_{n})_{n\in\N}$ be a sequence in $M$ that converges to $\zeta \in \Omega$.
  For every $\zeta_{n} \in M$ there exists an $r_{n} > 0$ such that
  \(
  \begin{bsmallmatrix}
    \vec{E} \\ \vec{H}
  \end{bsmallmatrix}
  \big\vert_{\ball_{r_{n}}(\zeta_{n})}
  = 0
  \).
  If there exists an $n\in\N$ such that $\zeta \in \ball_{r_{n}}(\zeta_{n})$, then clearly $\zeta \in M$. Hence we may assume that $r_{n} \leq \abs{\zeta_{n} - \zeta}$ for the remaining proof.

  Let $\rho$ be the number of \Cref{th:3-ball-estimate}. We choose $r > 0$ such that $\ball_{r}(\zeta) \subseteq \Omega$ and we choose $n_{0} \in \N$ such that $\abs{\zeta_{n_{0}} - \zeta} < \frac{1}{8}\min(r,\rho)$.
  We define (as illustrated in \Cref{fig:proof-unique-continuation})
  \begin{align*}
    r_{0} = r_{n_{0}},\quad
    r_{1} = 2 \abs{\zeta_{n_{0}} - \zeta}, \quad
    r_{2} = 6 \abs{\zeta_{n_{0}} - \zeta},
  \end{align*}
  which implies
  \begin{align*}
    r_{0} < r_{1} < r_{2}/2 < \rho.
  \end{align*}
  By \Cref{th:3-ball-estimate} we conclude
  \(
  \begin{bsmallmatrix}
    \vec{E} \\ \vec{H}
  \end{bsmallmatrix}
  \big\vert_{\ball_{r_{1}}(\zeta_{n_{0}})}
  = 0
  \)
  and consequently
  \(
  \begin{bsmallmatrix}
    \vec{E} \\ \vec{H}
  \end{bsmallmatrix}
  \big\vert_{\ball_{r_{1}/2}(\zeta)}
  = 0
  \),
  which implies $\zeta \in M$ and $M$ is closed.
  By assumption $M$ is non-empty, because $\emptyset \neq U \subseteq M$, hence we conclude that $M = \Omega$, as $\Omega$ is connected.
\end{proof}

\begin{proposition}
  Let
  \(
  \begin{bsmallmatrix}
    \vec{E} \\ \vec{H}
  \end{bsmallmatrix}
  \in \Lp{2}_{\textup{loc}}(\Omega)
  \)
  be a solution of~\eqref{eq:eigenvector-equation}.
  If there exists a $\Gamma \subseteq \partial \Omega$ relatively open such that $\tantr[\Gamma] \vec{E} = 0 = \tantr[\Gamma] \vec{H}$, then
  \(
  \begin{bsmallmatrix}
    \vec{E} \\ \vec{H}
  \end{bsmallmatrix}
  = 0
  \)
\end{proposition}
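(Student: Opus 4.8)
The plan is to deduce this from the interior unique continuation principle (\Cref{th:principle-of-unique-continuation}) by extending the solution by zero across the boundary piece $\Gamma$. Fix $\zeta_{0} \in \Gamma$. Since $\partial\Omega$ is strongly Lipschitz and $\Gamma$ is relatively open, there is a radius $r > 0$ so small that $\ball_{r}(\zeta_{0}) \cap \partial\Omega \subseteq \Gamma$, the intersection $\ball_{r}(\zeta_{0}) \cap \Omega$ is a Lipschitz domain, and $\vec{E}, \vec{H} \in \Hspace(\rot, \ball_{r}(\zeta_{0}) \cap \Omega)$ with tangential traces on $\ball_{r}(\zeta_{0}) \cap \partial\Omega$ given by the (vanishing) restriction of $\tantr[\Gamma] \vec{E}$, $\tantr[\Gamma] \vec{H}$ (this local $\Hspace(\rot)$-regularity is implicit in the assumption that the traces on $\Gamma$ make sense, and is automatic in the application to \Cref{th:semigroup-semi-uniformly-stable}, where the eigenvector lies in $\Hspace(\rot,\Omega)$). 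Extend $\epsilon$ and $\mu$ to Lipschitz continuous matrix-valued functions on $\R^{3}$; by continuity they stay bounded and uniformly positive on $B \coloneqq \ball_{\rho}(\zeta_{0})$ for some $0 < \rho \le r$. Set
\begin{align*}
  \widetilde{\vec{E}} \coloneqq \begin{cases} \vec{E} & \text{on } B \cap \Omega, \\ 0 & \text{on } B \setminus \Omega, \end{cases}
  \qquad
  \widetilde{\vec{H}} \coloneqq \begin{cases} \vec{H} & \text{on } B \cap \Omega, \\ 0 & \text{on } B \setminus \Omega, \end{cases}
\end{align*}
so that $\widetilde{\vec{E}}, \widetilde{\vec{H}} \in \Lp{2}(B)$.

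The main step is to show that $(\widetilde{\vec{E}}, \widetilde{\vec{H}})$ solves \eqref{eq:eigenvector-equation} on $B$ with the extended coefficients. For $\varphi \in \Cc(B)^{3}$, the integration by parts formula for $\rot$ on the Lipschitz domain $B \cap \Omega$ (see \Cref{sec:preliminary} and \Cref{sec:background-diffoperator-and-traces}) gives
\begin{align*}
  \scprod{\rot \widetilde{\vec{H}}}{\varphi}_{\mathcal{D}'(B)}
  = \scprod{\vec{H}}{\rot \varphi}_{\Lp{2}(B \cap \Omega)}
  = \scprod{\rot \vec{H}}{\varphi}_{\Lp{2}(B \cap \Omega)} - \dualprod{\tanxtr[\Gamma] \vec{H}}{\tantr[\Gamma] \varphi},
\end{align*}
where the boundary pairing runs only over $B \cap \partial\Omega \subseteq \Gamma$, since $\supp \varphi$ is a compact subset of $B$ and hence the part of $\partial(B\cap\Omega)$ lying in $\Omega$ contributes nothing. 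Since $\tantr[\Gamma] \vec{H} = 0$ is equivalent to $\tanxtr[\Gamma] \vec{H} = 0$ — the two traces differ only by the pointwise rotation $w \mapsto w \times \nu$, a bijection of the tangent plane — the boundary term vanishes, and with $\rot \vec{H} = \iu \omega \epsilon \vec{E}$ we obtain $\rot \widetilde{\vec{H}} = \iu \omega \epsilon \widetilde{\vec{E}}$ in $\mathcal{D}'(B)$. The same computation with $\vec{E}$ in place of $\vec{H}$ (now using $\tantr[\Gamma] \vec{E} = 0$) yields $\rot \widetilde{\vec{E}} = -\iu \omega \mu \widetilde{\vec{H}}$. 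Hence $(\widetilde{\vec{E}}, \widetilde{\vec{H}}) \in \Lp{2}_{\textup{loc}}(B)$ solves \eqref{eq:eigenvector-equation} on the connected set $B$ with admissible (Lipschitz, bounded, uniformly positive) coefficients.

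To conclude: $(\widetilde{\vec{E}}, \widetilde{\vec{H}})$ vanishes identically on the non-empty open set $B \setminus \cl{\Omega}$, so \Cref{th:principle-of-unique-continuation} (applied on $B$ with the extended $\epsilon, \mu$) forces $(\widetilde{\vec{E}}, \widetilde{\vec{H}}) = 0$ on $B$; in particular $(\vec{E}, \vec{H})$ vanishes on the non-empty open set $U \coloneqq B \cap \Omega \subseteq \Omega$. A second application of \Cref{th:principle-of-unique-continuation}, now on the connected domain $\Omega$ with this $U$, gives $(\vec{E}, \vec{H}) = 0$ on $\Omega$.

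I expect the main obstacle to be the middle paragraph: rigorously justifying that the zero extension across $\Gamma$ is again a distributional solution. The delicate points are the integration by parts for $\rot$ on a merely \emph{Lipschitz} subdomain together with the duality interpretation of $\tanxtr[\Gamma] \vec{H}$ and $\tanxtr[\Gamma] \vec{E}$ (all available from the trace theory recalled in \Cref{sec:preliminary}/\Cref{sec:background-diffoperator-and-traces}), the equivalence $\tantr[\Gamma](\,\cdot\,) = 0 \Leftrightarrow \tanxtr[\Gamma](\,\cdot\,) = 0$, and ensuring the extended coefficients remain Lipschitz, bounded and uniformly positive on a full neighbourhood of $\zeta_{0}$, which is handled by a Lipschitz extension and shrinking the ball. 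The remaining localisation and the two invocations of interior unique continuation are routine.
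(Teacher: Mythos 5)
Your proof is correct and follows essentially the paper's own route: extend the solution by zero across a piece of $\Gamma$ and invoke the unique continuation principle (\Cref{th:principle-of-unique-continuation}). The differences are only in the implementation. The paper extends $(\vec{E},\vec{H})$ by zero to $\hat{\Omega} = \Omega \cup \ball_{r}(\xi)$ with $\ball_{r}(\xi)\cap\partial\Omega$ contained in the interior of $\Gamma$, tests against $\phi\in\Cc(\hat{\Omega})$ and integrates by parts on $\Omega$ itself, so only the trace theory already set up on $\partial\Omega$ enters and a single application of unique continuation on the connected set $\hat{\Omega}$ finishes the proof; this sidesteps exactly the point you flag as delicate, namely integration by parts and the restriction of traces on the Lipschitz subdomain $B\cap\Omega$, and it also makes your second invocation of unique continuation on $\Omega$ unnecessary. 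Conversely, your explicit Lipschitz extension of $\epsilon$ and $\mu$ makes precise a step the paper leaves implicit (the coefficients must be admissible on $\ball_{r}(\xi)\setminus\Omega$ for the extended system and \Cref{th:3-ball-estimate} to apply there), and your observation that $\tantr[\Gamma]$ vanishes iff $\tanxtr[\Gamma]$ vanishes is likewise used tacitly in the paper's computation, since the hypothesis is stated for $\tantr[\Gamma]$ while the boundary term involves $\tanxtr$.
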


\begin{proof}
  We choose a point $\xi$ in the interior of $\Gamma$ and a radius $r > 0$ such that also $\ball_{r}(\xi) \cap \partial \Omega$ is in the interior of $\Gamma$. Then we extend
  \(
  \begin{bsmallmatrix}
    \vec{E} \\ \vec{H}
  \end{bsmallmatrix}
  \)
  on $\hat{\Omega} = \Omega \cup \ball_{r}(\xi)$ by
  \begin{align*}
    \hat{\vec{E}} =
    \begin{cases}
      \vec{E}, & \text{on}\ \Omega, \\
      0, & \text{on}\ \ball_{r}(\xi)\setminus\Omega,
    \end{cases}
           \quad\text{and}\quad
           \hat{\vec{H}} =
    \begin{cases}
      \vec{H}, & \text{on}\ \Omega, \\
      0, & \text{on}\ \ball_{r}(\xi)\setminus\Omega,
    \end{cases}
  \end{align*}
  Note that $\hat{\vec{E}}$ and $\hat{\vec{H}}$ are still in $\Hspace(\rot,\Omega)$, because for $\phi \in \Cc(\hat{\Omega})$
  \begin{align*}
    \scprod{\hat{\vec{E}}}{\rot \phi}_{\Lp{2}(\hat{\Omega})}
    &= \scprod{\vec{E}}{\rot \phi}_{\Lp{2}(\Omega)}
    = \scprod{\rot \vec{E}}{\phi}_{\Lp{2}(\Omega)} + \scprod{\underbrace{\tanxtr \vec{E}}_{=\mathrlap{0}}}{\tantr \phi}_{\Lp{2}(\Gamma)}
    \\
    &= \scprod{\widehat{\rot \vec{E}}}{\phi}_{\Lp{2}(\hat{\Omega})},
  \end{align*}
  where
  \begin{align*}
    \widehat{\rot \vec{E}} =
    \begin{cases}
      \rot \vec{E}, & \text{on}\ \Omega, \\
      0, & \text{on}\ \ball_{r}(\xi)\setminus\Omega.
    \end{cases}
  \end{align*}
  We can prove the same for $\vec{H}$ and obtain that
  \(
  \begin{bsmallmatrix}
    \hat{\vec{E}} \\ \hat{\vec{H}}
  \end{bsmallmatrix}
  \)
  is a solution of~\eqref{eq:eigenvector-equation} (on $\hat{\Omega}$). Since
  \(
  \begin{bsmallmatrix}
    \hat{\vec{E}} \\ \hat{\vec{H}}
  \end{bsmallmatrix}
  \big\vert_{\ball_{r}(\xi) \setminus \Omega}
  = 0
  \),
  we can apply \Cref{th:principle-of-unique-continuation} which leads to
  \(
  \begin{bsmallmatrix}
    \hat{\vec{E}} \\ \hat{\vec{H}}
  \end{bsmallmatrix}
  = 0
  \).
\end{proof}




\section{Background on the differential and trace operators}\label{sec:background-diffoperator-and-traces}

In this section we want to give a little background on the operator $A_{0}$ defined in~\eqref{eq:def-A-0} and ultimately justify that it generates a contraction semigroup. We still have the standing assumption that $(\Omega, \Gamma_{1})$ is a strongly Lipschitz pair.  In order to do that we have to explain the boundary spaces and traces that correspond to $\rot$. We recall the operator $A_{0}$ from \eqref{eq:def-A-0}
\begin{align*}
  A_{0}
  &=
  \begin{bmatrix}
    0 & \rot \\
    -\rot & 0
  \end{bmatrix}
            \underbrace{
            \begin{bmatrix}
              \epsilon^{-1} & 0 \\
              0 & \mu^{-1}
            \end{bmatrix}
                  }_{=\mathrlap{\hamiltonian}}
  \\
  \dom A_{0}
  &=
    \left\{
    \begin{bmatrix}
      \vec{D} \\ \vec{B}
    \end{bmatrix}
    \in
    \hamiltonian^{-1}
    \begin{bmatrix}
      \hH_{\partial\Omega}(\rot,\Omega) \cap \cH_{\Gamma_{0}}(\rot,\Omega)\\ \hH_{\Gamma_{1}}(\rot,\Omega)
    \end{bmatrix}
    \right.
    \hphantom{some extra space xxxxx}
  \\
  &\pushright{
    \left|\,
    \vphantom{\begin{bmatrix}\vec{D} \\ \vec{B}\end{bmatrix}}
    \tantr[\Gamma_{1}] \epsilon^{-1} \vec{D} + k \tanxtr[\Gamma_{1}] \mu^{-1} \vec{B} = 0
    \right\}
    }.
\end{align*}
In order to better understand the domain of $A_{0}$ we have to take a closer look at the traces $\tantr[\Gamma_{0}]$, $\tantr[\Gamma_{1}]$ and $\tanxtr[\Gamma_{1}]$.
First of all note that the differential operator $\rot$ can be written as
\begin{align}
  \begin{split}\label{eq:decompose-rot}
    \rot f
    &=
      \begin{bmatrix}
        0 & -\partial_{3} & \partial_{2} \\
        \partial_{3} & 0 & -\partial_{1} \\
        -\partial_{2} & \partial_{1} & 0
      \end{bmatrix}
      \begin{bmatrix}
        f_{1} \\ f_{2} \\ f_{3}
      \end{bmatrix}
    \\
    &=
      \vphantom{\partial_{1}
      \underbrace{
      \begin{bmatrix}
        0 & 0 & 0 \\
        0 & 0 & -1 \\
        0 & 1 & 0
      \end{bmatrix}
      }_{\eqqcolon\mathrlap{L_{1}}}
      }
      \left(
      \smash[b]{
      \partial_{1}
      \underbrace{
      \begin{bmatrix}
        0 & 0 & 0 \\
        0 & 0 & -1 \\
        0 & 1 & 0
      \end{bmatrix}
      }_{\eqqcolon\mathrlap{L_{1}}}
      \mathclose{}+\mathopen{}
      \partial_{2}
      \underbrace{
      \begin{bmatrix}
        0 & 0 & 1 \\
        0 & 0 & 0 \\
        -1 & 0 & 0
      \end{bmatrix}
      }_{\eqqcolon \mathrlap{L_{2}}}
      \mathclose{}+\mathopen{}
      \partial_{3}
      \underbrace{
      \begin{bmatrix}
        0 & -1 & 0 \\
        1 & 0 & 0 \\
        0 & 0 & 0
      \end{bmatrix}
      }_{\eqqcolon\mathrlap{L_{3}}}
      }
      \right)
      \begin{bmatrix}
        f_{1} \\ f_{2} \\ f_{3}
      \end{bmatrix}.
  \end{split}
\end{align}
Therefore, it has the form $\rot = \diffop \coloneqq \sum_{i=1}^{3} \partial_{i} L_{i}$.
Hence, the operator matches the form of the differential operators in~\cite{Sk21}. That work presents a general approach to boundary traces and boundary spaces that correspond to such differential operators. These boundary spaces and traces can be derived by an integration by parts formula.
Nevertheless we want to present here a sketch of this construction that is adjusted just for the $\rot$ operator.

For the $\rot$ operator we have for $f,g \in \Cc(\R^{3})$ the following integration by parts formula
\begin{align}
  \label{eq:int-by-parts-rot}
  \scprod{\rot f}{g}_{\Lp{2}(\Omega)} + \scprod{f}{-\rot g}_{\Lp{2}(\Omega)} = \scprod{\nu \times f}{(\nu \times g) \times \nu}_{\Lp{2}(\partial\Omega)}.
\end{align}
We can restrict ourselves to the boundary space $\Lp{2}_{\tau}(\partial\Omega) \coloneqq \dset{\phi \in \Lp{2}(\partial\Omega)}{\nu \cdot \phi = 0}$, as both arguments of the $\Lp{2}(\partial\Omega)$ inner product in \eqref{eq:int-by-parts-rot} belong to that space anyway.

In the following we want to show that similar to the integration by parts formula for $\div$-$\grad$ we can extend the integration parts formula by continuity on the maximal domain of the differential operator. The price to pay is that the $\Lp{2}$ inner product on the boundary has to be replaced by a dual pairing. For $\div$-$\grad$ this would be the dual pairing of $(\Hspace^{\nicefrac{1}{2}}(\partial\Omega),\Hspace^{-\nicefrac{1}{2}}(\partial\Omega))$, which forms a Gelfand triple with the pivot space $\Lp{2}(\partial\Omega)$.
Unfortunately unlike in the $\div$-$\grad$ case the boundary spaces that correspond to $\rot$ do not establish a Gelfand triple, at least not in the usual sense where we have continuous embeddings.
However, we get something that is almost a Gelfand triple, what we call \emph{quasi Gelfand triple}, a notion that was introduced in~\cite{Sk21} or more detailed in~\cite{Sk-Phd,Sk23}.

In particular we will give a sketch of the construction of the boundary spaces and traces that correspond to $\rot$.

\subsection{Boundary spaces}
Note that the integration by parts formula \eqref{eq:int-by-parts-rot} can easily be extended to $\Hspace^{1}(\Omega)$. Hence, we want to take the step to extend \eqref{eq:int-by-parts-rot} from $\Hspace^{1}(\Omega)$ to $\Hspace(\rot,\Omega)$.
In order to do this we introduce the spaces
\begin{align*}
  M(\Gamma_{1}) &= \dset*{(\nu \times \boundtr g \big\vert_{\Gamma_{1}})\times \nu}{g \in \Hspace^{1}(\Omega)} \subseteq \Lp{2}_{\tau}(\Gamma_{1}), \\
  \mathring{M}(\Gamma_{1}) &= \dset*{(\nu \times \boundtr g \big\vert_{\Gamma_{1}})\times \nu}{g \in \cH^{1}_{\Gamma_{0}}(\Omega)} \subseteq \Lp{2}_{\tau}(\Gamma_{1}).
\end{align*}
Note that every element of $\mathring{M}(\Gamma_{1})$ can be extended to $M(\partial\Omega)$ by setting it to zero on $\Gamma_{0}$. We define the spaces $\Vtau(\Gamma_{1})$ and $\cVtau(\Gamma_{1})$ as the completions of $M(\Gamma_{1})$ and $\mathring{M}(\Gamma_{1})$, respectively, with respect to the range norms
\begin{align*}
  \norm{\phi}_{\Vtau(\Gamma_{1})} &\coloneqq \inf_{\substack{g \in \Hspace^{1}(\Omega) \\ (\nu \times \boundtr g\vert_{\Gamma_{1}}) \times \nu = \phi}} \norm{g}_{\Hspace(\rot,\Omega)} \quad\text{for}\quad \phi \in M(\Gamma_{1}), \\
  \norm{\phi}_{\cVtau(\Gamma_{1})} &\coloneqq \inf_{\substack{g \in \cH^{1}_{\Gamma_{0}}(\Omega) \\ (\nu \times \boundtr g\vert_{\Gamma_{1}} ) \times \nu = \phi}} \norm{g}_{\Hspace(\rot,\Omega)} \quad\text{for}\quad \phi \in \mathring{M}(\Gamma_{1}),
\end{align*}
respectively. These two norms are really norms by \cite[Lem.~6.3]{Sk21}. By construction we have that
\begin{align*}
  \tantr[\Gamma_{1}]&\colon \mapping{\Hspace^{1}(\Omega) \subseteq \Hspace(\rot,\Omega)}{\Vtau(\Gamma_{1})}{g}{(\nu \times \boundtr g \big\vert_{\Gamma_{1}})\times \nu,} \\
  \mathllap{\quad\text{and}\quad}
  \ctantr[\Gamma_{1}]&\colon \mapping{\cH^{1}_{\Gamma_{0}}(\Omega) \subseteq \cH_{\Gamma_{0}}(\rot,\Omega)}{\cVtau(\Gamma_{1})}{g}{(\nu \times \boundtr g \big\vert_{\Gamma_{1}})\times \nu,}
\end{align*}
are continuous w.r.t.\ $\norm{\cdot}_{\Hspace(\rot,\Omega)}$ and $\norm{\cdot}_{\Vtau(\Gamma_{1})}$ ($\norm{\cdot}_{\cVtau(\Gamma_{1})}$).
Note that the restriction bar $\big\vert_{\Gamma_{1}}$ at $\tantr[\Gamma_{1}]$ is an abuse of notation. It indicates that we are only interested in what the trace does on $\Gamma_{1}$. Moreover, note that one characterization of $\cH_{\Gamma_{0}}(\rot,\Omega)$ is the closure of $\cH^{1}_{\Gamma_{0}}(\Omega)$ w.r.t.\ $\norm{\cdot}_{\Hspace(\rot,\Omega)}$, see e.g., \cite{BaPaScho16}.
Now we can extend $\tantr[\Gamma_{1}]$ and $\ctantr[\Gamma_{1}]$ by density and continuity to $\Hspace(\rot,\Omega)$ and $\cH_{\Gamma_{0}}(\rot,\Omega)$, respectively. We will use the same symbols for these extensions and call both of them \emph{tangential trace}. If $\Gamma_{1} = \partial\Omega$, then clearly $\Vtau(\partial\Omega) = \cVtau(\partial\Omega)$ and we will just write $\tantr$ instead of $\tantr[\partial\Omega]$ or $\ctantr[\partial\Omega]$.
Sometimes it is convenient to also leave out the circle on $\ctantr[\Gamma_{1}]$ even if we work with elements of $\cH_{\Gamma_{0}}(\rot,\Omega)$ that are mapped into $\cVtau(\Gamma_{0})$.
Moreover, if it is clear from the context, we will also just use $\tantr$ instead of $\tantr[\Gamma_{1}]$ and $\ctantr[\Gamma_{1}]$.

\begin{remark}
  For a short moment we want to distinguish between $\tantr[\Gamma_{1}]$ and its continuous extension on $\Hspace(\rot,\Omega)$, by denoting the extension by $\tantrex[\Gamma_{1}]$. Then it can be shown that $\tantrex[\Gamma_{1}] \colon \Hspace(\rot,\Omega) \to \Vtau(\Gamma_{1})$ is surjective and
  \begin{equation*}
    \ker \tantrex[\Gamma_{1}] = \cl[\Hspace(\rot,\Omega)]{\ker \tantr[\Gamma_{1}]}.
  \end{equation*}
  An analogous result holds for $\ctantr[\Gamma_{1}]$, see \cite[Lem.~6.4]{Sk21}.
\end{remark}

Basically we can repeat the previous construction for the twisted tangential trace $f \mapsto \nu \times \boundtr f$. Note that for smooth functions $g \mapsto (\nu \times g \big\vert_{\partial\Omega}) \times \nu$ is really the projection of $g$ on its tangential component. Therefore, the name \emph{tangential trace} is justified for $\tantr$. Furthermore, for smooth functions we have $\nu \times f \big\vert_{\partial\Omega} = \nu \times \tantr f$, which tells us that $\nu \times f \big\vert_{\partial\Omega}$ is a $90$ degree (or $\frac{\uppi}{2}$) rotated version of the tangential component of $f \big\vert_{\partial\Omega}$. The axis of rotation is the normal vector $\nu$.

Even though it is basically a repetition we will execute the construction also for the \emph{twisted tangential trace}. Hence, we define the twisted version of $M(\Gamma_{1})$ and $\mathring{M}(\Gamma_{1})$ as
\begin{align*}
  M^{\times}(\Gamma_{1}) &\coloneqq \dset*{\nu \times \boundtr f \big\vert_{\Gamma_{1}}}{f \in \Hspace^{1}(\Omega)} \subseteq \Lp{2}_{\tau}(\Gamma_{1}) \\
  \mathllap{\text{and}\quad}
  \mathring{M}^{\times}(\Gamma_{1}) &\coloneqq \dset*{\nu \times \boundtr f \big\vert_{\Gamma_{1}}}{f \in \cH_{\Gamma_{0}}^{1}(\Omega)} \subseteq \Lp{2}_{\tau}(\Gamma_{1}),
\end{align*}
respectively. Furthermore we define $\Vtaud(\Gamma_{1})$ and $\cVtaud(\Gamma_{1})$ as the completion of $M^{\times}(\Gamma_{1})$ and $\mathring{M}^{\times}(\Gamma_{1})$, respectively with respect to the range norms
\begin{align*}
  \norm{\psi}_{\Vtaud(\Gamma_{1})}
  &\coloneqq \inf_{\substack{f \in \Hspace^{1}(\Omega) \\ \nu \times \boundtr f \vert_{\Gamma_{1}} = \psi}}
  \norm{f}_{\Hspace(\rot,\Omega)} \quad\text{for}\quad \psi \in M^{\times}(\Gamma_{1}) \\
  \mathllap{\text{and}\quad}
  \norm{\psi}_{\cVtaud(\Gamma_{1})}
  &\coloneqq \inf_{\substack{f \in \cH^{1}_{\Gamma_{0}}(\Omega) \\ \nu \times \boundtr f \vert_{\Gamma_{1}} = \psi}}
  \norm{f}_{\Hspace(\rot,\Omega)} \quad\text{for}\quad \psi \in \mathring{M}^{\times}(\Gamma_{1}),
\end{align*}
respectively.
The cross symbol $\times$ in the superscript indicates that we deal with a twisted version of $M(\Gamma_{1})$ and $\Vtau(\Gamma_{1})$, respectively. In particular, the operation $\nu \times \cdot$ can be extended to a unitary mapping between $\Vtau(\Gamma_{1})$ and $\Vtaud(\Gamma_{1})$, and a unitary operator between $\cVtau(\Gamma_{1})$ and $\cVtaud(\Gamma_{1})$.
By construction the mapping
\begin{align*}
  \tanxtr[\Gamma_{1}]
  &\colon \mapping{\Hspace^{1}(\Omega) \subseteq \Hspace(\rot,\Omega)}{\Vtaud(\Gamma_{1})}{f}{\nu \times \boundtr f \big\vert_{\Gamma_{1}},} \\
  \mathllap{\text{and}\quad}
  \ctanxtr[\Gamma_{1}]
  &\colon \mapping{\cH_{\Gamma_{0}}^{1}(\Omega) \subseteq \cH_{\Gamma_{0}}(\rot,\Omega)}{\cVtaud(\Gamma_{1})}{f}{\nu \times \boundtr f \big\vert_{\Gamma_{1}},}
\end{align*}
is continuous w.r.t.\ $\norm{\cdot}_{\Hspace(\rot,\Omega)}$ and $\norm{\cdot}_{\Vtaud(\Gamma_{1})}$ ($\norm{\cdot}_{\cVtaud(\Gamma_{1})}$). Hence, we can extend these mappings by continuity and density to $\Hspace(\rot,\Omega)$ and $\cH_{\Gamma_{0}}(\rot,\Omega)$, respectively.
We still denote these extension by $\tanxtr[\Gamma_{1}]$ and $\ctanxtr[\Gamma_{1}]$.
If $\Gamma_{1} = \partial\Omega$ we will just write $\tanxtr$. Also if it is clear from the context that we only want to regard the trace on $\Gamma_{1}$, we will just write $\tanxtr$ instead of $\tanxtr[\Gamma_{1}]$.
We will sometimes also omit the circle at $\ctanxtr$, if is is clear that we regard elements in $\cH_{\Gamma_{0}}(\rot,\Omega)$.

Note that the $\tau$ in the subscript of $\Lp{2}_{\tau}(\Gamma_{1})$, $\cVtau(\Gamma_{1})$, $\Vtau(\Gamma_{1})$, $\cVtaud(\Gamma_{1})$ and $\Vtaud(\Gamma_{1})$ indicates that these spaces are tangential on $\Gamma_{1}$.

\subsection{Dual pairing} Next we want to show that there is a dual pairing between $\cVtau(\Gamma_{1})$ and $\Vtaud(\Gamma_{1})$, i.e., $(\cVtau(\Gamma_{1}),\Vtaud(\Gamma_{1}))$ is a dual pair. Clearly, the same holds for their twisted versions $(\Vtau(\Gamma_{1}),\cVtaud(\Gamma_{1}))$, which is then just a corollary.

\begin{lemma}\label{le:continuous-dual-pairing}
  Let $\phi \in \mathring{M}(\Gamma_{1})$ and $\psi \in M^{\times}(\Gamma_{1})$. Then
  \begin{align*}
    \abs{\scprod{\psi}{\phi}_{\Lp{2}_{\tau}(\Gamma_{1})}} \leq \norm{\psi}_{\Vtaud(\Gamma_{1})} \norm{\phi}_{\cVtau(\Gamma_{1})}.
  \end{align*}
\end{lemma}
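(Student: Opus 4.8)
The plan is to read the estimate off the integration by parts formula \eqref{eq:int-by-parts-rot}, which by the discussion preceding the statement is available on all of $\Hspace^{1}(\Omega)$. First I would fix representatives of $\psi$ and $\phi$: by the definitions of $M^{\times}(\Gamma_{1})$ and $\mathring{M}(\Gamma_{1})$ there are $f \in \Hspace^{1}(\Omega)$ with $\nu \times \boundtr f\big\vert_{\Gamma_{1}} = \psi$ and $g \in \cH^{1}_{\Gamma_{0}}(\Omega)$ with $(\nu \times \boundtr g\big\vert_{\Gamma_{1}}) \times \nu = \phi$. Since $g \in \cH^{1}_{\Gamma_{0}}(\Omega)$, the tangential part of its boundary trace vanishes on $\Gamma_{0}$, so the boundary integrand in \eqref{eq:int-by-parts-rot} is concentrated on $\Gamma_{1}$; recalling that $\Lp{2}_{\tau}(\Gamma_{1})$ carries the $\Lp{2}(\Gamma_{1})$ inner product, this yields
\begin{align*}
  \scprod{\psi}{\phi}_{\Lp{2}_{\tau}(\Gamma_{1})}
  &= \scprod{\nu \times f}{(\nu \times g) \times \nu}_{\Lp{2}(\partial\Omega)} \\
  &= \scprod{\rot f}{g}_{\Lp{2}(\Omega)} + \scprod{f}{-\rot g}_{\Lp{2}(\Omega)}.
\end{align*}

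From here the estimate is immediate. Bounding each summand by the Cauchy--Schwarz inequality in $\Lp{2}(\Omega)$ and then applying the Cauchy--Schwarz inequality in $\R^{2}$ gives
\begin{align*}
  \abs{\scprod{\psi}{\phi}_{\Lp{2}_{\tau}(\Gamma_{1})}}
  &\leq \norm{\rot f}_{\Lp{2}(\Omega)}\,\norm{g}_{\Lp{2}(\Omega)} + \norm{f}_{\Lp{2}(\Omega)}\,\norm{\rot g}_{\Lp{2}(\Omega)} \\
  &\leq \norm{f}_{\Hspace(\rot,\Omega)}\,\norm{g}_{\Hspace(\rot,\Omega)}.
\end{align*}
The left-hand side does not depend on the chosen $f$ and $g$, so I would pass to the infimum first over all admissible $f$ and then over all admissible $g$; by the definitions of the range norms $\norm{\cdot}_{\Vtaud(\Gamma_{1})}$ and $\norm{\cdot}_{\cVtau(\Gamma_{1})}$ this is exactly the asserted inequality.

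The only step that deserves attention is the first displayed identity, in particular the reduction of the boundary pairing from $\partial\Omega$ to $\Gamma_{1}$. This uses $g \in \cH^{1}_{\Gamma_{0}}(\Omega)$: approximating $g$ in $\Hspace^{1}(\Omega)$ by functions in $\Cc_{\Gamma_{0}}(\Omega)$, which vanish near $\Gamma_{0}$, and invoking the continuity of $\boundtr$, one sees that $(\nu \times \boundtr g) \times \nu = 0$ on $\Gamma_{0}$, so the integrand of $\scprod{\nu \times f}{(\nu \times g) \times \nu}_{\Lp{2}(\partial\Omega)}$ vanishes there. I do not expect any genuine obstacle; once the $\Hspace^{1}$-version of \eqref{eq:int-by-parts-rot} is in hand, the whole argument is a two-line computation followed by the routine passage to the range norms.
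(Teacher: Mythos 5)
Your argument is correct and is essentially the paper's own proof: pick representatives $f \in \Hspace^{1}(\Omega)$, $g \in \cH^{1}_{\Gamma_{0}}(\Omega)$, use $\boundtr g\big\vert_{\Gamma_{0}}=0$ to pass from $\Gamma_{1}$ to $\partial\Omega$, apply the $\Hspace^{1}$-version of \eqref{eq:int-by-parts-rot}, estimate by Cauchy--Schwarz to get $\norm{f}_{\Hspace(\rot,\Omega)}\norm{g}_{\Hspace(\rot,\Omega)}$, and take the infimum over representatives to recover the range norms. No gaps; the paper carries out exactly these steps.
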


\begin{proof}
  Note that $\mathring{M}(\Gamma_{1}) = \tantr[\Gamma_{1}] \cH_{\Gamma_{0}}^{1}(\Omega)$ and $M^{\times}(\Gamma_{1}) = \tanxtr[\Gamma_{1}] \Hspace^{1}(\Omega)$. Hence, there exists a $g \in \cH^{1}_{\Gamma_{0}}(\Omega)$ and an $f \in \Hspace^{1}(\Omega)$ such that $\tantr[\Gamma_{1}] g = \phi$ and $\tanxtr[\Gamma_{1}] f = \psi$. By $\boundtr g \big\vert_{\Gamma_{0}} = 0$, the integration by parts formula \eqref{eq:int-by-parts-rot} and Cauchy--Schwarz's inequality we have
  \begin{align*}
    \abs*{\scprod{\psi}{\phi}_{\Lp{2}_{\tau}(\Gamma_{1})}}
    &= \abs*{\scprod[\big]{\tanxtr[\Gamma_{1}] f}{\tantr[\Gamma_{1}] g}_{\Lp{2}_{\tau}(\Gamma_{1})}}
      = \abs*{\scprod[\big]{\tanxtr[\Gamma_{1}] f}{\tantr[\Gamma_{1}] g}_{\Lp{2}_{\tau}(\partial\Omega)}} \\
    &= \abs*{\scprod{\rot f}{g}_{\Lp{2}(\Omega)} - \scprod{f}{\rot g}_{\Lp{2}(\Omega)}} \\
    &\leq \norm{\rot f}_{\Lp{2}(\Omega)} \norm{g}_{\Lp{2}(\Omega)} + \norm{f}_{\Lp{2}(\Omega)} \norm{\rot g}_{\Lp{2}(\Omega)} \\
    &\leq \sqrt{\norm{\rot f}^{2}_{\Lp{2}(\Omega)} + \norm{f}^{2}_{\Lp{2}(\Omega)}} \sqrt{\norm{g}^{2}_{\Lp{2}(\Omega)} + \norm{\rot g}^{2}_{\Lp{2}(\Omega)}} \\
    &= \norm{f}_{\Hspace(\rot,\Omega)} \norm{g}_{\Hspace(\rot,\Omega)}.
  \end{align*}
  Since this is true for all $g \in \cH^{1}_{\Gamma_{0}}(\Omega)$ and $f \in \Hspace^{1}(\Omega)$ such that $\tantr[\Gamma_{1}] g = \phi$ and $\tanxtr[\Gamma_{1}] f = \psi$, we can apply an infimum on the right-hand side and obtain
  \begin{equation*}
    \abs{\scprod{\psi}{\phi}_{\Lp{2}_{\tau}(\Gamma_{1})}}
    \leq \inf \norm{f}_{\Hspace(\rot,\Omega)} \norm{g}_{\Hspace(\rot,\Omega)}
    = \norm{\psi}_{\Vtaud(\Gamma_{1})} \norm{\phi}_{\cVtau(\Gamma_{1})}. \qedhere
  \end{equation*}
\end{proof}

Now we can define the following dual pairing by a limit.

\begin{definition}
  Let $\phi \in \cVtau(\Gamma_{1})$ and $\psi \in \Vtaud(\Gamma_{1})$. Then there exist sequences $(\phi_{n})_{n\in\N}$ in $\mathring{M}(\Gamma_{1})$ and $(\psi_{k})_{k\in\N}$ in $M^{\times}(\Gamma_{1})$ that converge to $\phi$ and $\psi$, respectively. We define the dual pairing between $\cVtau(\Gamma_{1})$ and $\Vtaud(\Gamma_{1})$ by
  \begin{align}\label{eq:def-dual-pairing}
    \dualprod{\psi}{\phi}_{\Vtaud(\Gamma_{1}),\cVtau(\Gamma_{1})} \coloneqq \lim_{\substack{n\to \infty \\k \to \infty}} \scprod{\psi_{n}}{\phi_{k}}_{\Lp{2}_{\tau}(\Gamma_{1})}.
  \end{align}
  As short notation we will use
  \(
  \dualprod{\psi}{\phi}_{\Vtaud,\cVtau}
  \),
  if $\Gamma_{1}$ is clear.
\end{definition}

This dual pairing is really well-defined, since we can use \Cref{le:continuous-dual-pairing} to show that the net on the right-hand side is a Cauchy net. Strictly speaking the mapping $\dualprod{\cdot}{\cdot}_{\Vtaud(\Gamma_{1}),\cVtau(\Gamma_{1})}$ is a priori just a sesquilinear form. However, we will show that it is indeed a dual pairing, i.e., the mapping
\begin{align}\label{eq:dual-pairing-induced-isometry}
  \Psi\colon \mapping{\Vtaud(\Gamma_{1})}{\cVtau(\Gamma_{1})\dual}{\psi}{\dualprod{\psi}{\cdot}_{\Vtaud(\Gamma_{1}),\cVtau(\Gamma_{1})},}
\end{align}
is an isometric isomorphism. Note that we use the convention of always regarding the antidual space, which is more convenient when switching between dual pairings and inner products. Hence, $\cVtau(\Gamma_{1})\dual$ denotes the antidual space.

Moreover, for every such sesquilinear form we define the version with switched arguments by the complex conjugate of the original sesquilinear form, i.e.,
\begin{equation*}
  \dualprod{\phi}{\psi}_{\cVtau(\Gamma_{1}), \Vtaud(\Gamma_{1})} \coloneqq \conj{\dualprod{\psi}{\phi}_{\Vtaud(\Gamma_{1}),\cVtau(\Gamma_{1})}}.
\end{equation*}

However, before we show that we have indeed defined a dual pairing we show that we can extend the integration by parts formula with $\dualprod{\cdot}{\cdot}_{\Vtaud(\Gamma_{1}),\cVtau(\Gamma_{1})}$ for arbitrary $f \in \Hspace(\rot,\Omega)$ and $g \in \cH_{\Gamma_{0}}(\rot,\Omega)$.

\begin{lemma}\label{le:int-by-parts-extended-to-max-dom}
  For $f \in \Hspace(\rot,\Omega)$ and $g \in \cH_{\Gamma_{0}}(\rot,\Omega)$ we have
  \begin{equation*}
    \scprod{\rot f}{g}_{\Lp{2}(\Omega)} - \scprod{f}{\rot g}_{\Lp{2}(\Omega)} = \dualprod{\tanxtr f}{\tantr g}_{\Vtaud(\Gamma_{1}),\cVtau(\Gamma_{1})}.
  \end{equation*}
\end{lemma}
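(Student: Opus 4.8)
The plan is to establish the identity by extending the classical integration by parts formula \eqref{eq:int-by-parts-rot} from the dense cores $\Hspace^{1}(\Omega) \subseteq \Hspace(\rot,\Omega)$ and $\cH^{1}_{\Gamma_{0}}(\Omega) \subseteq \cH_{\Gamma_{0}}(\rot,\Omega)$ to the full maximal domains, replacing the $\Lp{2}_{\tau}(\partial\Omega)$ boundary inner product by the dual pairing $\dualprod{\cdot}{\cdot}_{\Vtaud(\Gamma_{1}),\cVtau(\Gamma_{1})}$ in the limit.

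First I would verify the identity on the core. For $f \in \Hspace^{1}(\Omega)$ and $g \in \cH^{1}_{\Gamma_{0}}(\Omega)$ the formula \eqref{eq:int-by-parts-rot} holds on all of $\partial\Omega$. Since $\boundtr g\big\vert_{\Gamma_{0}} = 0$, the tangential component of $g$ vanishes on $\Gamma_{0}$, so $(\nu\times g)\times\nu$ is supported on $\cl{\Gamma_{1}}$ and the boundary term collapses to $\scprod{\tanxtr[\Gamma_{1}] f}{\tantr[\Gamma_{1}] g}_{\Lp{2}_{\tau}(\Gamma_{1})}$, with $\tanxtr[\Gamma_{1}] f \in M^{\times}(\Gamma_{1})$ and $\tantr[\Gamma_{1}] g \in \mathring{M}(\Gamma_{1})$. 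Feeding the constant sequences $\psi_{n}\equiv\tanxtr[\Gamma_{1}] f$ and $\phi_{k}\equiv\tantr[\Gamma_{1}] g$ into the definition \eqref{eq:def-dual-pairing} identifies this $\Lp{2}_{\tau}(\Gamma_{1})$ product with $\dualprod{\tanxtr f}{\tantr g}_{\Vtaud(\Gamma_{1}),\cVtau(\Gamma_{1})}$; hence the asserted equality holds for all such $f,g$.

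Next I would pass to the limit. Given $f \in \Hspace(\rot,\Omega)$ and $g \in \cH_{\Gamma_{0}}(\rot,\Omega)$, choose $f_{n} \in \Hspace^{1}(\Omega)$ with $f_{n}\to f$ (density of $\Hspace^{1}(\Omega)$ in $\Hspace(\rot,\Omega)$, e.g.\ \cite{BaPaScho16}) and $g_{m} \in \cH^{1}_{\Gamma_{0}}(\Omega)$ with $g_{m}\to g$ (recall $\cH_{\Gamma_{0}}(\rot,\Omega)$ is by definition the $\Hspace(\rot,\Omega)$-closure of $\cH^{1}_{\Gamma_{0}}(\Omega)$), both in $\norm{\cdot}_{\Hspace(\rot,\Omega)}$. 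Then $f_{n},\rot f_{n},g_{m},\rot g_{m}$ converge in $\Lp{2}(\Omega)$ to $f,\rot f,g,\rot g$, so the left-hand side of the core identity converges to $\scprod{\rot f}{g}_{\Lp{2}(\Omega)} - \scprod{f}{\rot g}_{\Lp{2}(\Omega)}$. For the right-hand side, continuity of the extended traces gives $\tanxtr f_{n}\to\tanxtr f$ in $\Vtaud(\Gamma_{1})$ and $\tantr g_{m}\to\tantr g$ in $\cVtau(\Gamma_{1})$; together with the bound of \Cref{le:continuous-dual-pairing} — which, being continuous and bilinear on the dense subspaces $M^{\times}(\Gamma_{1})$ and $\mathring{M}(\Gamma_{1})$, extends to $\abs{\dualprod{\psi}{\phi}_{\Vtaud(\Gamma_{1}),\cVtau(\Gamma_{1})}} \leq \norm{\psi}_{\Vtaud(\Gamma_{1})}\,\norm{\phi}_{\cVtau(\Gamma_{1})}$ for all $\psi\in\Vtaud(\Gamma_{1})$, $\phi\in\cVtau(\Gamma_{1})$, hence to joint continuity of the pairing — this yields $\dualprod{\tanxtr f_{n}}{\tantr g_{m}} \to \dualprod{\tanxtr f}{\tantr g}$. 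Equating the two limits (taking, say, iterated limits $m\to\infty$ then $n\to\infty$) proves the lemma.

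The only genuinely delicate point is the bookkeeping of the double limit together with the transfer of the bilinear estimate of \Cref{le:continuous-dual-pairing} from the cores to the completed boundary spaces $\Vtaud(\Gamma_{1})$ and $\cVtau(\Gamma_{1})$; once joint continuity of $\dualprod{\cdot}{\cdot}_{\Vtaud(\Gamma_{1}),\cVtau(\Gamma_{1})}$ is secured everything is routine. One should also take care to cite the density of $\Hspace^{1}(\Omega)$ in $\Hspace(\rot,\Omega)$ explicitly, as this is the one ingredient not already prepared earlier in the appendix.
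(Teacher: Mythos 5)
Your argument is correct and is essentially the paper's own proof: verify the identity on a dense core via the classical formula \eqref{eq:int-by-parts-rot} (with the boundary term collapsing to $\Gamma_{1}$ since the trace of $g$ vanishes on $\Gamma_{0}$) and then extend by density and continuity of the traces and of the pairing. The only cosmetic difference is the choice of core -- you use $\Hspace^{1}(\Omega)$ and $\cH^{1}_{\Gamma_{0}}(\Omega)$, while the paper uses $\Cc(\R^{3})$ and $\Cc_{\Gamma_{0}}(\R^{3})$ -- and you spell out the transfer of the bound from \Cref{le:continuous-dual-pairing} to the completed boundary spaces, which the paper leaves implicit.
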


\begin{proof}
  Note that by \eqref{eq:int-by-parts-rot} we have for $f \in \Cc(\R^{3})$ and $g \in \Cc_{\Gamma_{0}}(\R^{3})$
  \begin{align*}
    \scprod{\rot f}{g}_{\Lp{2}(\Omega)} - \scprod{f}{\rot g}_{\Lp{2}(\Omega)} = \scprod{\tanxtr f}{\tantr g}_{\Lp{2}_{\tau}(\Gamma_{1})}
  \end{align*}
  Since $\Cc(\R^{3})$ is dense in $\Hspace(\rot,\Omega)$ and $\Cc_{\Gamma_{0}}(\R^{3})$ is dense in $\cH_{\Gamma_{0}}(\rot,\Omega)$, the assertion follows by continuity.
\end{proof}

Note that $\cVtau(\Gamma_{1})$ and $\Vtaud(\Gamma_{1})$ are by construction isometrically isomorphic to the quotient spaces $\quspace{\cH_{\Gamma_{0}}(\rot,\Omega)}{\ker \ctantr[\Gamma_{1}]}$ and $\quspace{\Hspace(\rot,\Omega)}{\ker \tanxtr[\Gamma_{1}]}$, respectively.
These spaces are in turn isometrically isomorphic to $(\ker \ctantr[\Gamma_{1}])^{\perp}$ and $(\ker \tanxtr[\Gamma_{1}])^{\perp}$, respectively, where the orthogonal complement is taken in $\cH_{\Gamma_{0}}(\rot,\Omega)$ and $\Hspace(\rot,\Omega)$, respectively. Hence, the mappings $\ctantr[\Gamma_{1}]$ and $\tanxtr[\Gamma_{1}]$ are isometric isomorphisms from these orthogonal complements into $\cVtau(\Gamma_{1})$ and $\Vtaud(\Gamma_{1})$, respectively. Moreover, we have
\begin{align*}
  \Cc(\Omega) \subseteq \ker \ctantr[\Gamma_{1}] \subseteq \ker \tanxtr[\Gamma_{1}].
\end{align*}
Hence, for the orthogonal complements (in $\Hspace(\rot,\Omega)$) we have the reverse inclusion. The next lemmas also show similarities to the notion of \emph{boundary data spaces} from \cite[Sec.~5.2]{PiTrWa16}.

\begin{lemma}\label{le:ortho-complement-of-Cc-Omega}
  The orthogonal complement of $\Cc(\Omega)$ in $\Hspace(\rot,\Omega)$ can be characterized by
  \begin{align*}
    \Cc(\Omega)^{\perp} = \dset{f \in \Hspace(\rot,\Omega)}{\rot\rot f = -f}.
  \end{align*}
\end{lemma}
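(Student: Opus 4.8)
The plan is to unravel the definition of the orthogonal complement in $\Hspace(\rot,\Omega)$ and reinterpret the ``curl--curl'' cross term as a distributional identity. Recall that the inner product on $\Hspace(\rot,\Omega)$ is $\scprod{f}{g}_{\Hspace(\rot,\Omega)} = \scprod{f}{g}_{\Lp{2}(\Omega)} + \scprod{\rot f}{\rot g}_{\Lp{2}(\Omega)}$. Thus $f \in \Cc(\Omega)^{\perp}$ holds if and only if $\scprod{\rot f}{\rot\phi}_{\Lp{2}(\Omega)} = -\scprod{f}{\phi}_{\Lp{2}(\Omega)}$ for every $\phi \in \Cc(\Omega)$. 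First I would record that, by \eqref{eq:int-by-parts-rot} with the compactly supported test field $\phi$, the boundary term vanishes, so $\rot$ is formally self-adjoint on $\Cc(\Omega)$; applying this with the $\Lp{2}$-field $\rot f$ in the role of the smooth field shows that $\phi \mapsto \scprod{\rot f}{\rot\phi}_{\Lp{2}(\Omega)}$ is precisely the distributional $\rot$ of $\rot f$ evaluated at $\phi$.

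For the inclusion ``$\subseteq$'', the displayed identity then says exactly that $\rot(\rot f) = -f$ in $\mathcal{D}'(\Omega)$. Since the right-hand side $-f$ already lies in $\Lp{2}(\Omega)$, this distributional identity upgrades automatically: $\rot f \in \Hspace(\rot,\Omega)$ and $\rot\rot f = -f$ as an $\Lp{2}(\Omega)$ identity, so $f$ belongs to the right-hand set. For the reverse inclusion, take $f \in \Hspace(\rot,\Omega)$ with $\rot\rot f = -f$; in particular $\rot f \in \Hspace(\rot,\Omega)$, so for every $\phi \in \Cc(\Omega)$ the extended integration by parts formula \eqref{eq:int-by-parts-rot} (again with vanishing boundary term, applied to the pair $\rot f$ and $\phi$) gives $\scprod{\rot f}{\rot\phi}_{\Lp{2}(\Omega)} = \scprod{\rot\rot f}{\phi}_{\Lp{2}(\Omega)} = -\scprod{f}{\phi}_{\Lp{2}(\Omega)}$, i.e.\ $\scprod{f}{\phi}_{\Hspace(\rot,\Omega)} = 0$; as $\phi$ was arbitrary, $f \in \Cc(\Omega)^{\perp}$.

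I do not expect a genuine obstacle here: the one point that needs care is the bookkeeping in the middle step, namely that $\scprod{\rot f}{\rot\phi}_{\Lp{2}(\Omega)}$ genuinely represents the distributional curl of $\rot f$ applied to $\phi$ (using that $\rot$ is formally \emph{self-adjoint}, not skew, and handling the conjugation of the test function, which is harmless since $\overline{\phi}$ again ranges over $\Cc(\Omega)$). Once that is in place, the passage from a mere distributional identity to membership in $\Hspace(\rot,\Omega)$ is immediate because $-f \in \Lp{2}(\Omega)$, and both inclusions follow without further computation.
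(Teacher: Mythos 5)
Your proof is correct and follows essentially the same route as the paper: the forward inclusion reads the orthogonality relation as the weak (distributional) identity $\rot\rot f=-f$ with $-f\in\Lp{2}(\Omega)$, and the reverse inclusion integrates by parts against compactly supported test fields. The only cosmetic difference is that in the reverse direction the paper invokes the extended formula of \Cref{le:int-by-parts-extended-to-max-dom} together with $\tantr g=0$ for $g\in\Cc(\Omega)$, while you appeal directly to the definition of the weak $\rot$, which amounts to the same computation.
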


\begin{proof}
  Let $f \in \Cc(\Omega)^{\perp}$. Then for every $g \in \Cc(\Omega)$ we have
  \begin{align*}
    0 = \scprod{f}{g}_{\Hspace(\rot,\Omega)} = \scprod{f}{g}_{\Lp{2}(\Omega)} + \scprod{\rot f}{\rot g}_{\Lp{2}(\Omega)}.
  \end{align*}
  Hence, $\rot f \in \Hspace(\rot,\Omega)$ and $\rot \rot f = - f$.

  On the other hand, if $\rot \rot f = -f$, then we have for every $g \in \Cc(\Omega)$
  \begin{align*}
    \scprod{f}{g}_{\Hspace(\rot,\Omega)}
    &= \scprod{f}{g}_{\Lp{2}(\Omega)} + \scprod{\rot f}{\rot g}_{\Lp{2}(\Omega)} \\
    &= \scprod{\rot (-\rot f)}{g}_{\Lp{2}(\Omega)} - \scprod{-\rot f}{\rot g}_{\Lp{2}(\Omega)} \\
    &= \scprod{\tanxtr (-\rot f)}{\tantr g}_{\Vtaud(\partial\Omega),\Vtau(\partial\Omega)}.
  \end{align*}
  Since $\tantr g = 0$ for every $g \in \Cc(\Omega)$ we conclude the assertion.
\end{proof}

\begin{corollary}\label{th:properties-of-preimage-of-Vtaud-elements}
  For every $\psi \in \Vtaud(\Gamma_{1})$ there exists an $f \in \Hspace(\rot,\Omega)$ such that
  \begin{align*}
    \psi = \tanxtr f, \quad \norm{\psi}_{\Vtaud(\Gamma_{1})} = \norm{f}_{\Hspace(\rot,\Omega)} = \norm{\rot f}_{\Hspace(\rot,\Omega)} \quad\text{and} \quad \rot \rot f = -f .
  \end{align*}
  Moreover, for every $g \in \cH_{\Gamma_{0}}(\rot,\Omega)$ we have
  \begin{align*}
    \scprod{\rot f}{g}_{\Hspace(\rot,\Omega)} = \dualprod{\psi}{\tantr g}_{\Vtaud(\Gamma_{1}),\cVtau(\Gamma_{1})}.
  \end{align*}
\end{corollary}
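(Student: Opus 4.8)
The plan is to read off the vector $f$ directly from the identification, recorded in the paragraph preceding the statement, of $\Vtaud(\Gamma_{1})$ with the orthogonal complement $(\ker \tanxtr[\Gamma_{1}])^{\perp}$ in $\Hspace(\rot,\Omega)$, realised by $\tanxtr[\Gamma_{1}]$ acting there as an isometric isomorphism. So, given $\psi \in \Vtaud(\Gamma_{1})$, I would take $f \in (\ker \tanxtr[\Gamma_{1}])^{\perp}$ to be the unique element with $\tanxtr f = \psi$; the isometry gives $\norm{\psi}_{\Vtaud(\Gamma_{1})} = \norm{f}_{\Hspace(\rot,\Omega)}$ immediately, equivalently $f$ is the norm-minimal preimage of $\psi$ under $\tanxtr[\Gamma_{1}]$.

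Next I would use the inclusions $\Cc(\Omega) \subseteq \ker \ctantr[\Gamma_{1}] \subseteq \ker \tanxtr[\Gamma_{1}]$, which reverse under orthogonal complements in $\Hspace(\rot,\Omega)$ to $(\ker \tanxtr[\Gamma_{1}])^{\perp} \subseteq \Cc(\Omega)^{\perp}$; hence $f \in \Cc(\Omega)^{\perp}$ and \Cref{le:ortho-complement-of-Cc-Omega} yields $\rot \rot f = -f$. In particular $\rot f \in \Hspace(\rot,\Omega)$ and
\[
  \norm{\rot f}_{\Hspace(\rot,\Omega)}^{2}
  = \norm{\rot f}_{\Lp{2}(\Omega)}^{2} + \norm{\rot \rot f}_{\Lp{2}(\Omega)}^{2}
  = \norm{\rot f}_{\Lp{2}(\Omega)}^{2} + \norm{f}_{\Lp{2}(\Omega)}^{2}
  = \norm{f}_{\Hspace(\rot,\Omega)}^{2},
\]
which closes the chain of norm equalities.

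For the final identity I would compute, for $g \in \cH_{\Gamma_{0}}(\rot,\Omega)$, using $\rot \rot f = -f$,
\[
  \scprod{\rot f}{g}_{\Hspace(\rot,\Omega)}
  = \scprod{\rot f}{g}_{\Lp{2}(\Omega)} + \scprod{\rot \rot f}{\rot g}_{\Lp{2}(\Omega)}
  = \scprod{\rot f}{g}_{\Lp{2}(\Omega)} - \scprod{f}{\rot g}_{\Lp{2}(\Omega)},
\]
and then apply \Cref{le:int-by-parts-extended-to-max-dom} to rewrite the right-hand side as $\dualprod{\tanxtr f}{\tantr g}_{\Vtaud(\Gamma_{1}),\cVtau(\Gamma_{1})} = \dualprod{\psi}{\tantr g}_{\Vtaud(\Gamma_{1}),\cVtau(\Gamma_{1})}$.

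I do not expect a genuine obstacle: every ingredient is already established, and the argument is essentially bookkeeping built on \Cref{le:ortho-complement-of-Cc-Omega}, \Cref{le:int-by-parts-extended-to-max-dom}, and the characterisation of $\Vtaud(\Gamma_{1})$ as $(\ker \tanxtr[\Gamma_{1}])^{\perp}$. The only point deserving care is that first step—that the norm-minimal preimage of $\psi$ exists in $\Hspace(\rot,\Omega)$ and automatically lands in $\Cc(\Omega)^{\perp}$—so the identification just before the statement, together with the surjectivity of the extended trace $\tanxtr[\Gamma_{1}]$ onto $\Vtaud(\Gamma_{1})$, is the load-bearing input.
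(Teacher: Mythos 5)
Your proposal is correct and follows essentially the same route as the paper: pick the norm-minimal preimage $f \in (\ker \tanxtr[\Gamma_{1}])^{\perp} \subseteq \Cc(\Omega)^{\perp}$, invoke \Cref{le:ortho-complement-of-Cc-Omega} for $\rot\rot f = -f$, and combine this with the extended integration by parts formula (\Cref{le:int-by-parts-extended-to-max-dom}) for the final identity. The only cosmetic difference is that the paper computes the chain of equalities starting from the dual pairing rather than from the $\Hspace(\rot,\Omega)$ inner product.
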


\begin{proof}
  Recall that $\Vtaud(\Gamma_{1})$ is isometrically isomorphic to $(\ker \tanxtr[\Gamma_{1}])^{\perp}$. Hence, for every $\psi \in \Vtaud(\Gamma_{1})$ there exists an $f \in (\ker \tanxtr[\Gamma_{1}])^{\perp} \subseteq \Cc(\Omega)^{\perp} \subseteq \Hspace(\rot,\Omega)$ such that $\tanxtr f = \psi$ and $\norm{\psi}_{\Vtaud(\Gamma_{1})} = \norm{f}_{\Hspace(\rot,\Omega)}$. By \Cref{le:ortho-complement-of-Cc-Omega} we have $\rot \rot f = -f$. Moreover, we have
  \begin{equation*}
    \norm{\rot f}_{\Hspace(\rot,\Omega)}^{2} = \norm{\rot f}_{\Lp{2}(\Omega)}^{2} + \norm{\underbrace{\rot \rot f}_{=\mathrlap{-f}}}_{\Lp{2}(\Omega)}^{2} = \norm{f}_{\Hspace(\rot,\Omega)}^{2} = \norm{\psi}_{\Vtaud(\Gamma_{1})}^{2}.
  \end{equation*}
  Finally, we have
  \begin{align*}
    \dualprod{\psi}{\tantr g}_{\Vtaud(\Gamma_{1}),\cVtau(\Gamma_{1})}
    &= \dualprod{\tanxtr f}{\tantr g}_{\Vtaud(\Gamma_{1}),\cVtau(\Gamma_{1})}
    = \scprod{\rot f}{g}_{\Lp{2}(\Omega)} - \scprod{f}{\rot g}_{\Lp{2}(\Omega)} \\
    &= \scprod{\rot f}{g}_{\Lp{2}(\Omega)} + \scprod{\rot \rot f}{\rot g}_{\Lp{2}(\Omega)} \\
    &= \scprod{\rot f}{g}_{\Hspace(\rot,\Omega)}. \tag*{\qedhere}
  \end{align*}
\end{proof}

\begin{theorem}
  The spaces $\cVtau(\Gamma_{1})$ and $\Vtaud(\Gamma_{1})$ form the dual pair $(\cVtau(\Gamma_{1}), \Vtaud(\Gamma_{1}))$ with the dual pairing $\dualprod{\cdot}{\cdot}_{\Vtaud(\Gamma_{1}),\cVtau(\Gamma_{1})}$ defined in~\eqref{eq:def-dual-pairing}.
\end{theorem}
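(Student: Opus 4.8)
The plan is to prove that the map $\Psi$ from~\eqref{eq:dual-pairing-induced-isometry} is an isometric isomorphism onto the antidual $\cVtau(\Gamma_{1})\dual$; this is exactly the assertion that $(\cVtau(\Gamma_{1}),\Vtaud(\Gamma_{1}))$ is a dual pair. Throughout I would use the two isometric identifications recalled just before the statement: $\tanxtr[\Gamma_{1}]$ maps the orthogonal complement $(\ker\tanxtr[\Gamma_{1}])^{\perp}$ in $\Hspace(\rot,\Omega)$ isometrically onto $\Vtaud(\Gamma_{1})$, and $\ctantr[\Gamma_{1}]$ maps the orthogonal complement $(\ker\ctantr[\Gamma_{1}])^{\perp}$ in $\cH_{\Gamma_{0}}(\rot,\Omega)$ isometrically onto $\cVtau(\Gamma_{1})$.

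For the isometry of $\Psi$, fix $\psi\in\Vtaud(\Gamma_{1})$ and let $f$ be the element of $\Hspace(\rot,\Omega)$ provided by \Cref{th:properties-of-preimage-of-Vtaud-elements}: it satisfies $\tanxtr f=\psi$, $\rot\rot f=-f$, $\norm{\rot f}_{\Hspace(\rot,\Omega)}=\norm{\psi}_{\Vtaud(\Gamma_{1})}$, and $\scprod{\rot f}{g}_{\Hspace(\rot,\Omega)}=\dualprod{\psi}{\tantr g}_{\Vtaud(\Gamma_{1}),\cVtau(\Gamma_{1})}$ for every $g\in\cH_{\Gamma_{0}}(\rot,\Omega)$; moreover $f\perp\ker\tanxtr[\Gamma_{1}]$, since $f$ realises the quotient norm of $\psi$. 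The decisive step is to show that $\rot f\in(\ker\ctantr[\Gamma_{1}])^{\perp}\subseteq\cH_{\Gamma_{0}}(\rot,\Omega)$. Orthogonality to $\ker\ctantr[\Gamma_{1}]$ is read off the displayed identity, because $\tantr h=0$ for $h\in\ker\ctantr[\Gamma_{1}]$. For $\rot f\in\cH_{\Gamma_{0}}(\rot,\Omega)$ I would argue that, since $\cH_{\Gamma_{1}}(\rot,\Omega)\subseteq\ker\tanxtr[\Gamma_{1}]$ and $f\perp\ker\tanxtr[\Gamma_{1}]$, the identity $\scprod{f}{g}_{\Hspace(\rot,\Omega)}=0$ for $g\in\cH_{\Gamma_{1}}(\rot,\Omega)$ rewrites, via $\rot\rot f=-f$, as $\scprod{\rot f}{\rot g}_{\Lp{2}(\Omega)}=\scprod{-f}{g}_{\Lp{2}(\Omega)}$; this says precisely that $\rot f$ belongs to the domain of the adjoint of $\rot$ taken with domain $\cH_{\Gamma_{1}}(\rot,\Omega)$, and that adjoint is $\rot$ with domain $\cH_{\Gamma_{0}}(\rot,\Omega)$, as recalled in the proof of \Cref{th:helmholtz-decomposition}. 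Granting this, an arbitrary $\phi\in\cVtau(\Gamma_{1})$ may be written $\phi=\ctantr[\Gamma_{1}]g$ with $g\in(\ker\ctantr[\Gamma_{1}])^{\perp}$ and $\norm{g}_{\Hspace(\rot,\Omega)}=\norm{\phi}_{\cVtau(\Gamma_{1})}$, so that $\dualprod{\psi}{\phi}_{\Vtaud(\Gamma_{1}),\cVtau(\Gamma_{1})}=\scprod{\rot f}{g}_{\Hspace(\rot,\Omega)}$, and hence
\begin{align*}
  \norm{\Psi\psi}_{\cVtau(\Gamma_{1})\dual}
  &=\sup\bigl\{\,\abs{\scprod{\rot f}{g}_{\Hspace(\rot,\Omega)}}: g\in(\ker\ctantr[\Gamma_{1}])^{\perp},\ \norm{g}_{\Hspace(\rot,\Omega)}\le1\,\bigr\} \\
  &=\norm{\rot f}_{\Hspace(\rot,\Omega)}=\norm{\psi}_{\Vtaud(\Gamma_{1})},
\end{align*}
the middle equality holding exactly because $\rot f$ itself lies in the subspace over which the supremum is taken. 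In particular $\Psi$ is isometric, hence injective.

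For surjectivity I would start from $\ell\in\cVtau(\Gamma_{1})\dual$, transport it along $\ctantr[\Gamma_{1}]$, and apply the Riesz representation theorem on the Hilbert space $(\ker\ctantr[\Gamma_{1}])^{\perp}$ to obtain $w\in(\ker\ctantr[\Gamma_{1}])^{\perp}$ with $\ell(\ctantr[\Gamma_{1}]g)=\scprod{w}{g}_{\Hspace(\rot,\Omega)}$ for all $g\in(\ker\ctantr[\Gamma_{1}])^{\perp}$. Since $w\perp\Cc(\Omega)$, \Cref{le:ortho-complement-of-Cc-Omega} gives $\rot\rot w=-w$, so $f\coloneqq-\rot w\in\Hspace(\rot,\Omega)$ with $\rot f=w$; set $\psi\coloneqq\tanxtr f$. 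Then, for $\phi=\ctantr[\Gamma_{1}]g$ with $g\in(\ker\ctantr[\Gamma_{1}])^{\perp}$, the extended integration by parts formula \Cref{le:int-by-parts-extended-to-max-dom} applied to $f$ and $g$ gives
\begin{align*}
  \dualprod{\psi}{\phi}_{\Vtaud(\Gamma_{1}),\cVtau(\Gamma_{1})}
  &=\scprod{\rot f}{g}_{\Lp{2}(\Omega)}-\scprod{f}{\rot g}_{\Lp{2}(\Omega)} \\
  &=\scprod{w}{g}_{\Lp{2}(\Omega)}+\scprod{\rot w}{\rot g}_{\Lp{2}(\Omega)}=\scprod{w}{g}_{\Hspace(\rot,\Omega)}=\ell(\phi),
\end{align*}
so $\Psi\psi=\ell$. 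This makes $\Psi$ an isometric isomorphism, i.e.\ $(\cVtau(\Gamma_{1}),\Vtaud(\Gamma_{1}))$ a dual pair; the twisted version for $(\Vtau(\Gamma_{1}),\cVtaud(\Gamma_{1}))$ then follows by transporting everything along the unitary $\nu\times\,\cdot\,$ relating the two pairs of boundary spaces.

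The step I expect to be the main obstacle is the claim $\rot f\in\cH_{\Gamma_{0}}(\rot,\Omega)$. The lemmas developed above characterise $\Cc(\Omega)^{\perp}$ but not $\cH_{\Gamma_{0}}(\rot,\Omega)^{\perp}$, and a direct attempt to verify $\tantr[\Gamma_{0}](\rot f)=0$ is circular, as it reduces to the non-degeneracy of the pairing on $\Gamma_{0}$ that one is trying to prove. Passing instead through the adjoint identity $(\rot\big\vert_{\cH_{\Gamma_{1}}(\rot,\Omega)})^{\ast}=\rot\big\vert_{\cH_{\Gamma_{0}}(\rot,\Omega)}$—part of the Helmholtz-decomposition machinery already invoked—is what breaks the circularity.
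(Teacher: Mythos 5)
Your proposal is correct and follows essentially the paper's own route: you show that the map $\Psi$ from \eqref{eq:dual-pairing-induced-isometry} is an isometric isomorphism onto $\cVtau(\Gamma_{1})\dual$, using \Cref{th:properties-of-preimage-of-Vtaud-elements} for the isometry and a Riesz-representation argument combined with $\rot\rot w=-w$ (\Cref{le:ortho-complement-of-Cc-Omega}) for surjectivity; your surjectivity step differs only cosmetically, applying Riesz on $(\ker\ctantr[\Gamma_{1}])^{\perp}$ instead of on all of $\cH_{\Gamma_{0}}(\rot,\Omega)$ and reading off $w\perp\Cc(\Omega)$ from that choice rather than from testing the Riesz identity. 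The one substantive addition is your explicit proof that $\rot f\in\cH_{\Gamma_{0}}(\rot,\Omega)$, hence $\rot f\in(\ker\ctantr[\Gamma_{1}])^{\perp}$, derived from $f\perp\ker\tanxtr[\Gamma_{1}]\supseteq\cH_{\Gamma_{1}}(\rot,\Omega)$ together with the weak characterization of $\cH_{\Gamma_{0}}(\rot,\Omega)$ (equivalently the adjoint identity behind \Cref{th:helmholtz-decomposition}). This is exactly what the paper's final equality $\sup_{g\in\cH_{\Gamma_{0}}(\rot,\Omega)\setminus\set{0}}\abs{\scprod{\rot f}{g}_{\Hspace(\rot,\Omega)}}/\norm{g}_{\Hspace(\rot,\Omega)}=\norm{\rot f}_{\Hspace(\rot,\Omega)}$ uses tacitly, since that supremum equals the norm of the orthogonal projection of $\rot f$ onto $\cH_{\Gamma_{0}}(\rot,\Omega)$; so your extra step closes a point left implicit in the paper rather than taking a detour, and your argument for it is sound.
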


\begin{proof}
  Note that the mapping $\Psi$ from \eqref{eq:dual-pairing-induced-isometry} is well-defined and bounded by the estimate
  \begin{equation*}
    \abs{\dualprod{\psi}{\phi}_{\Vtaud(\Gamma_{1}),\cVtau(\Gamma_{1})}} \leq \norm{\psi}_{\Vtaud(\Gamma_{1})} \norm{\phi}_{\cVtau(\Gamma_{1})}
  \end{equation*}
  from \Cref{le:continuous-dual-pairing}. In particular, $\norm{\Psi(\psi)}_{\cVtau(\Gamma_{1})\dual} \leq \norm{\psi}_{\Vtaud(\Gamma_{1})}$.
  Moreover,
  since the mapping $\tantr \colon \cH_{\Gamma_{0}}(\rot,\Omega) \to \cVtau(\Gamma_{1})$ is continuous, the composition $g \mapsto \mu(\tantr g)$  for fixed $\mu \in \cVtau(\Gamma_{1})\dual$ is continuous from $\cH_{\Gamma_{0}}(\rot,\Omega)$ to $\C$.
  This implies that there exists an $h \in \cH_{\Gamma_{0}}(\rot,\Omega)$ such that
  \begin{align*}
    \mu(\tantr g) = \scprod{h}{g}_{\Hspace(\rot,\Omega)} = \scprod{h}{g}_{\Lp{2}(\Omega)} + \scprod{\rot h}{\rot g}_{\Lp{2}(\Omega)}
  \end{align*}
  for all $g \in \cH_{\Gamma_{0}}(\rot,\Omega)$. In particular for $g \in \Cc(\Omega) \subseteq \ker \tantr$ we have
  \begin{equation*}
    0 = \scprod{h}{g}_{\Lp{2}(\Omega)} + \scprod{\rot h}{\rot g}_{\Lp{2}(\Omega)}
  \end{equation*}
  and consequently $\rot h \in \Hspace(\rot,\Omega)$ and $\rot \rot h = -h$. We define $\tilde{h} \coloneqq -\rot h$. Then for an arbitrary $g \in \cH_{\Gamma_{0}}(\rot,\Omega)$ we have
  \begin{align*}
    \mu(\tantr g)
    &= \scprod{h}{g}_{\Lp{2}(\Omega)} + \scprod{\rot h}{\rot g}_{\Lp{2}(\Omega)} \\
    &= \scprod{-\rot\rot h}{g}_{\Lp{2}(\Omega)} + \scprod{\rot h}{\rot g}_{\Lp{2}(\Omega)} \\
    &= \scprod{\rot(\underbrace{-\rot h}_{=\mathrlap{\tilde{h}}})}{g}_{\Lp{2}(\Omega)} - \scprod{\underbrace{-\rot h}_{=\mathrlap{\tilde{h}}}}{\rot g}_{\Lp{2}(\Omega)} \\
    &= \scprod{\rot \tilde{h}}{g}_{\Lp{2}(\Omega)} - \scprod{\tilde{h}}{\rot g}_{\Lp{2}(\Omega)} = \dualprod{\tanxtr \tilde{h}}{\tantr g}_{\Vtaud(\Gamma_{1}),\cVtau(\Gamma_{1})}.
  \end{align*}
  This implies $\mu = \Psi(\tanxtr \tilde{h})$ and therefore the surjectivity of $\Psi$.
  Hence, it it is left to show $\norm{\Psi(\psi)}_{\cVtau(\Gamma_{1})\dual} \geq \norm{\psi}_{\Vtaud(\Gamma_{1})}$.


  For $\psi \in \Vtaud(\Gamma_{1})$ we choose $f \in \Hspace(\rot,\Omega)$ as in \Cref{th:properties-of-preimage-of-Vtaud-elements}
  \begin{align*}
    \norm{\Psi(\psi)}_{\cVtau(\Gamma_{1})\dual}
    &= \sup_{\phi \in \cVtau(\Gamma_{1})\setminus\set{0}} \frac{\abs{\dualprod{\psi}{\phi}_{\Vtaud(\Gamma_{1}),\cVtau(\Gamma_{1})}}}{\norm{\phi}_{\cVtau(\Gamma_{1})}} \\
    &= \sup_{g \in \cH_{\Gamma_{0}}(\rot,\Omega)\setminus \ker \tantr*[\Gamma_{1}]}
      \frac{\abs{\dualprod{\tanxtr f}{\tantr g}_{\Vtaud(\Gamma_{1}),\cVtau(\Gamma_{1})}}}{\norm{\tantr g}_{\cVtau(\Gamma_{1})}} \\
    &\geq \sup_{g \in \cH_{\Gamma_{0}}(\rot,\Omega)\setminus \set{0}}
      \frac{\abs{\scprod{\rot f}{g}_{\Hspace(\rot,\Omega)}}}{\norm{g}_{\Hspace(\rot,\Omega)}} = \norm{\rot f}_{\Hspace(\rot,\Omega)} \\
    &= \norm{\psi}_{\Vtaud(\Gamma_{1})}
  \end{align*}
  which finishes the proof.
\end{proof}

Note that by construction our dual pair $(\cVtau(\Gamma_{1}),\Vtaud(\Gamma_{1}))$ is not only a dual pair, but its dual pairing is determined by an inner product, the inner product of the pivot space $\Lp{2}_{\tau}(\Gamma_{1})$. This special situation gives additional structure. In particular we call $(\cVtau(\Gamma_{1}),\Lp{2}_{\tau}(\Gamma_{1}),\Vtaud(\Gamma_{1}))$ a \emph{quasi Gelfand triple}, see \cite{Sk21,Sk23} for a detailed discussion of this notion. Roughly speaking a quasi Gelfand triple is a Gelfand triple without continuous embeddings, but instead ``closed embeddings''.
\Cref{fig:comparism-ordinary-gelfand-triples-to-quasi-gelfand-triple} illustrates the difference to ``ordinary'' Gelfand triples.

\begin{figure}[h]
  \centering
  \begin{subfigure}{0.49\textwidth}
    \centering
    \begin{tikzpicture}
      \draw[thick](-6,0) circle [radius=1.6];
      \draw[thick,blue](-6,0.2) circle [radius=1.3];
      \draw[thick,black!30!green](-6,-0.2) circle [radius=1.9];

      \small
      \node[above] at (-6,-1.6) {$\Lp{2}(\partial\Omega)$};
      \node[blue,above] at (-6,-1) {$\Hspace^{\nicefrac{1}{2}}\mspace{-1mu}(\partial\Omega)$};
      \node[black!30!green,above] at (-6,-2.1) {$\Hspace^{-\nicefrac{1}{2}}\mspace{-1mu}(\partial\Omega)$};

    \end{tikzpicture}
    \caption{Gelfand triple}
  \end{subfigure}
  \begin{subfigure}{0.49\textwidth}
    \centering
    \begin{tikzpicture}
      \begin{scope}
        \clip (-0.5,-0.5) circle (1.5);
        \fill[blue,opacity=0.15] (0,0) circle (1.5);
      \end{scope}
      \begin{scope}
        \clip (0.5,-0.5) circle (1.5);
        \fill[black!30!green,opacity=0.15] (0,0) circle (1.5);
      \end{scope}
      \draw[thick](0,0) circle [radius=1.5];
      \draw[thick,blue](-0.5,-0.5) circle [radius=1.5];
      \draw[thick,black!30!green](0.5,-0.5) circle [radius=1.5];
      \node[left,blue] at (-1.7,-1.5) {$\Vtau(\partial\Omega)$};
      \node[right,black!30!green] at (1.7,-1.5) {$\Vtaud(\partial\Omega)$};
      \node[above] at (0,1.5) {$\Lp{2}_{\tau}(\partial\Omega)$};
    \end{tikzpicture}
    \caption{Quasi Gelfand triple}
  \end{subfigure}

  \caption{Difference between ordinary and quasi Gelfand triples}
  \label{fig:comparism-ordinary-gelfand-triples-to-quasi-gelfand-triple}
\end{figure}
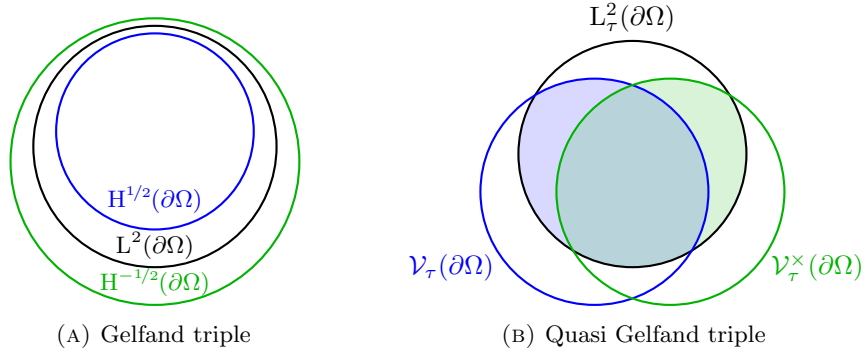


\subsection{L\textsuperscript{2} tangential traces}

In order to define the set $\hH_{\Gamma_{1}}(\rot,\Omega)$ properly we have to explain what we mean by $\tantr f \in \Lp{2}(\Gamma_{1})$.
First, we want to recall that the space $\cH_{\Gamma_{0}}(\rot,\Omega)$ is defined as the set of all $f \in \Hspace(\rot,\Omega)$ that satisfy
\begin{equation*}
  \scprod{\rot f}{\varphi}_{\Lp{2}(\Omega)} - \scprod{f}{\rot \varphi}_{\Lp{2}(\Omega)} = 0 \quad\text{for all}\quad \varphi \in \Cc_{\Gamma_{1}}(\R^{3}).
\end{equation*}
Note that, as we have already used, this set coincides with the closure of $\cH^{1}_{\Gamma_{0}}(\Omega)$ in $\Hspace(\rot,\Omega)$, see \cite{BaPaScho16}.

\smallskip
Finally we define what it means for an $f \in \Hspace(\rot,\Omega)$ to have an $\Lp{2}(\Gamma_{1})$ tangential trace. There are two a priori different approaches: a weak approach by representation in an inner product and a strong approach by convergence.

\begin{definition}[weak $\Lp{2}_{\tau}(\Gamma_{1})$ tangential trace]
  \phantom{a}
  \begin{itemize}[leftmargin=\parindent,itemsep=3pt,parsep=2pt]
    \item For $f \in \Hspace(\rot,\Omega)$ we say $\tanxtr f$ is weakly in $\Lp{2}_{\tau}(\Gamma_{1})$, if there exists an $h \in \Lp{2}_{\tau}(\Gamma_{1})$ such that
          \begin{equation*}
            \scprod{\rot f}{\varphi}_{\Lp{2}(\Omega)} - \scprod{f}{\rot \varphi}_{\Lp{2}(\Omega)} = \scprod{h}{\tantr \varphi}_{\Lp{2}(\Gamma_{1})}
            \quad\text{for all}\quad \varphi \in \Cc_{\Gamma_{0}}(\R^{3}).
          \end{equation*}
          We then say $\tanxtr f = h$ weakly (on $\Gamma_{1}$).

    \item For $g \in \Hspace(\rot,\Omega)$ we say $\tantr g$ is weakly in $\Lp{2}_{\tau}(\Gamma_{1})$, if there exists an $h \in \Lp{2}_{\tau}(\Gamma_{1})$ such that
          \begin{equation*}
            \scprod{\rot \varphi}{g}_{\Lp{2}(\Omega)} - \scprod{\varphi}{\rot g}_{\Lp{2}(\Omega)} = \scprod{\tanxtr \varphi}{h}_{\Lp{2}(\Gamma_{1})}
            \quad\text{for all}\quad \varphi \in \Cc_{\Gamma_{0}}(\R^{3}).
          \end{equation*}
          We then say $\tantr g = h$ weakly (on $\Gamma_{1}$).
  \end{itemize}
\end{definition}

\begin{definition}[strong $\Lp{2}_{\tau}(\Gamma_{1})$ tangential trace]
  \phantom{a}
  \begin{itemize}[leftmargin=\parindent,itemsep=3pt,parsep=2pt]
    \item For $f \in \Hspace(\rot,\Omega)$ we say $\tanxtr f$ is strongly in $\Lp{2}_{\tau}(\Gamma_{1})$,
          if there exists a sequence $(\varphi_{n})_{n\in\N}$ in $\Cc(\R^{3})$ and an $h \in \Lp{2}_{\tau}(\Gamma_{1})$ such that
          \begin{equation*}
            \norm{f - \varphi_{n}}_{\Hspace(\rot,\Omega)} \to 0 \quad\text{and}\quad \norm{\tanxtr \varphi_{n} - h}_{\Lp{2}(\Gamma_{1})} \to 0.
          \end{equation*}
          We then say $\tanxtr f = h$ strongly (on $\Gamma_{1}$).

    \item For $g \in \Hspace(\rot,\Omega)$ we say $\tantr g$ is strongly in $\Lp{2}_{\tau}(\Gamma_{1})$,
          if there exists a sequence $(\varphi_{n})_{n\in\N}$ in $\Cc(\R^{3})$ and an $h \in \Lp{2}_{\tau}(\Gamma_{1})$ such that
          \begin{equation*}
            \norm{g - \varphi_{n}}_{\Hspace(\rot,\Omega)} \to 0 \quad\text{and}\quad \norm{\tantr \varphi_{n} -  h}_{\Lp{2}(\Gamma_{1})} \to 0.
          \end{equation*}
          We then say $\tantr g = h$ strongly (on $\Gamma_{1}$).
  \end{itemize}
\end{definition}

Note that it is not really necessary to have separate definition for $\tantr f \in \Lp{2}_{\tau}(\Gamma_{1})$ and $\tanxtr f \in \Lp{2}_{\tau}(\Gamma_{1})$, as they are actually equivalent (both weakly and strongly, respectively). In particular, if one of these tangential trace is $\Lp{2}_{\tau}(\Gamma_{1})$, then we can calculate the other by the following formula $\tantr f = \tanxtr f \times \nu$ or equivalently $\nu \times \tantr f = \tanxtr f$.



Note that we can replace $\varphi \in \Cc_{\Gamma_{0}}(\R^{3})$ by $\varphi \in \cH^{1}_{\Gamma_{0}}(\Omega)$ (by a density argument) in the weak definition.

\begin{remark}
  The previous definitions can also be formulated in terms of the boundary spaces $\Vtaud(\Gamma_{1})$ and $\cVtau(\Gamma_{1})$:
  \begin{itemize}
    \item
          We say $\psi \in \Vtaud(\Gamma_{1})$ is weakly in the pivot space $\Lp{2}_{\tau}(\Gamma_{1})$, if there exists an $h\in \Lp{2}_{\tau}(\Gamma_{1})$ such that
          \begin{align*}
            \dualprod{\psi}{\phi}_{\Vtaud(\Gamma_{1}),\cVtau(\Gamma_{1})} = \scprod{h}{\phi}_{\Lp{2}_{\tau}(\Gamma_{1})} \quad\text{for all}\quad \phi \in \mathring{M}(\Gamma_{1}).
          \end{align*}
          Hence, $\tanxtr f \in \Lp{2}(\Gamma_{1})$ weakly, if
          \begin{align*}
            \dualprod{\tanxtr f}{\tantr g}_{\Vtaud(\Gamma_{1}),\cVtau(\Gamma_{1})} = \scprod{h}{\tantr g}_{\Lp{2}_{\tau}(\Gamma_{1})} \quad\text{for all}\quad g \in \cH^{1}_{\Gamma_{0}}(\Omega).
          \end{align*}
          Clearly, we can do the same for $\Vtau(\Gamma_{1})$ and $\tantr$.

    \item
          We say $\psi \in \Vtaud(\Gamma_{1})$ is strongly in $\Lp{2}_{\tau}(\Gamma_{1})$, if there exists a sequence $(\psi_{n})_{n\in\N}$ in $M^{\times}(\Gamma_{1})$ and an $h \in \Lp{2}_{\tau}(\Gamma_{1}) $ such that
          \begin{equation*}
            \norm{\psi_{n} - \psi}_{\Vtaud(\Gamma_{1})} \to 0 \quad\text{and}\quad \norm{\psi_{n} - h}_{\Lp{2}_{\tau}(\Gamma_{1})} \to 0.
          \end{equation*}
          Hence, $\tanxtr f \in \Lp{2}(\Gamma_{1})$ strongly, if there exists a sequence $(f_{n})_{n\in\N}$ in $\Hspace^{1}(\Omega)$ and an $h \in \Lp{2}_{\tau}(\Gamma_{1})$ such that
          \begin{equation*}
            \norm{\tanxtr f_{n} - \tanxtr f}_{\Vtaud(\Gamma_{1})} \to 0 \quad\text{and}\quad \norm{\tanxtr f_{n} - h}_{\Lp{2}_{\tau}(\Gamma_{1})} \to 0.
          \end{equation*}
          Note that $\tanxtr\colon (\ker \tanxtr[\Gamma_{1}])^{\perp} \to \Vtaud(\Gamma_{1})$ is a unitary mapping. Therefore, we can modify $(f_{n})_{n\in\N}$ such that we also have $\norm{f_{n} - f}_{\Hspace(\rot,\Omega)} \to 0$.\footnote{If we really want to prove this modification of $f_{n}$ such that the other properties are preserved, we would also use $\ker \tanxtr[\Gamma_{1}] = \cl[\Hspace(\rot,\Omega)]{\ker \tanxtr[\Gamma_{1}] \cap \Hspace^{1}(\Omega)}$.}
  \end{itemize}

\end{remark}

Since we have a proper definition for $\tantr f \in \Lp{2}_{\tau}(\Gamma_{1})$ we can finally properly define the space $\hH_{\Gamma_{1}}(\rot,\Omega)$.

\begin{definition}
  We define
  \begin{align*}
    \hH_{\Gamma_{1}}(\rot,\Omega)
    &\coloneqq \dset{f \in \Hspace(\rot,\Omega)}{\tantr f \in \Lp{2}_{\tau}(\Gamma_{1}) \ \text{weakly}} \\
    &\phantom{\mathclose{}\coloneqq\mathopen{}}\mathllap{=\mathopen{}}
    \dset{f \in \Hspace(\rot,\Omega)}{\tanxtr f \in \Lp{2}_{\tau}(\Gamma_{1}) \ \text{weakly}}
  \end{align*}
  and endow this set with the ``natural'' inner product
  \begin{align*}
    \scprod{f}{g}_{\hH_{\Gamma_{1}}(\rot,\Omega)}
    &\coloneqq \scprod{f}{g}_{\Lp{2}(\Omega)} + \scprod{\rot f}{\rot g}_{\Lp{2}(\Omega)} + \scprod{\tantr f}{\tantr g}_{\Lp{2}_{\tau}(\Gamma_{1})} \\
    &\phantom{\mathclose{}\coloneqq\mathopen{}}\mathllap{=\mathopen{}}
      \scprod{f}{g}_{\Lp{2}(\Omega)} + \scprod{\rot f}{\rot g}_{\Lp{2}(\Omega)} + \scprod{\tanxtr f}{\tanxtr g}_{\Lp{2}_{\tau}(\Gamma_{1})}.
  \end{align*}
\end{definition}

Note that for $f \in \Hspace(\rot,\Omega)$ it is straightforward to show, that $\tantr f$ is strongly in $\Lp{2}_{\tau}(\Gamma_{1})$ implies it is also weakly in $\Lp{2}_{\tau}(\Gamma_{1})$ and the weak $\Lp{2}_{\tau}(\Gamma_{1})$ trace equals the strong one.
Hence, the set of all $f \in \Hspace(\rot,\Omega)$ that satisfy $\tantr f \in \Lp{2}_{\tau}(\Gamma_{1})$ strongly is a subspace of $\hH_{\Gamma_{1}}(\rot,\Omega)$. In particular it is the closure of $\Hspace^{1}(\Omega)$ in $\hH_{\Gamma_{1}}(\rot,\Omega)$ w.r.t.\ $\norm{\cdot}_{\hH_{\Gamma_{1}}(\rot,\Omega)}$. The question that immediately arises is: ``Do these sets coincide?'' or equivalently
\begin{center}
  ``Is $\Hspace^{1}(\Omega)$ dense in $\hH_{\Gamma_{1}}(\rot,\Omega)$ (w.r.t.\ $\norm{\cdot}_{\hH_{\Gamma_{1}}(\rot,\Omega)}$)?''
\end{center}
For $\Gamma_{1} = \partial\Omega$ this question was answered in \cite{BeBeCoDa97}. For general $\Gamma_{1}$ this question is formulated as an open problem in \cite{WeSt13} (to be precise the dual version of this question was asked in \cite{WeSt13}, which we will formulate later in \eqref{eq:dual-density-question}) and was recently answered in \cite{SkPa23}.

\medskip

If we want to characterize $\Lp{2}$ tangential traces on $\Gamma_{1}$ for functions that additionally have a homogeneous tangential trace on $\Gamma_{0}$, then it turns out to be more convenient to regard this in a combined way. More precisely, we do not look at the space $\hH_{\Gamma_{1}}(\rot,\Omega)\cap \cH_{\Gamma_{0}}(\rot,\Omega)$, but at $\hH_{\partial\Omega}(\rot,\Omega) \cap \cH_{\Gamma_{0}}(\rot,\Omega)$. This means that we regard functions that have an $\Lp{2}$ tangential trace on the entire boundary and on one part of the boundary this tangential trace vanishes. One might hope that these spaces coincide, but it is even unclear to us, whether
\begin{equation}\label{eq:new-open-problem}
  \cH_{\Gamma_{0}}(\rot,\Omega) \cap \cH_{\Gamma_{1}}(\rot,\Omega) = \cH_{\partial\Omega}(\rot,\Omega)
\end{equation}
holds true. Hence, these questions can be seen as \textbf{open problems}.

\medskip

Anyway, for our purpose it suffices to work with $\hH_{\partial\Omega}(\rot,\Omega) \cap \cH_{\Gamma_{0}}(\rot,\Omega)$. A strong approach to that space would be to regard limits of $\cH_{\Gamma_{0}}^{1}(\Omega)$ elements w.r.t.\ $\norm{\cdot}_{\hH_{\partial\Omega}(\rot,\Omega)}$, which is the closure of $\cH_{\Gamma_{0}}^{1}(\Omega)$ in $\hH_{\partial\Omega}(\rot,\Omega) \cap \cH_{\Gamma_{0}}(\rot,\Omega)$. Again the question that arises is:
\begin{equation}\label{eq:dual-density-question}
  \text{``Is $\cH_{\Gamma_{0}}^{1}(\Omega)$ dense in $\hH_{\partial\Omega}(\rot,\Omega) \cap \cH_{\Gamma_{0}}(\rot,\Omega)$ (w.r.t.\ $\norm{\cdot}_{\hH_{\partial\Omega}(\rot,\Omega)}$)?''}
\end{equation}
This is the actual formulation of the open problem of \cite[eq.~(5.20)]{WeSt13} and at the end of section~5 in \cite{WeSt13}. Also this question was answered recently in \cite{SkPa23}.

Hence, long story short, we do not have to distinguish between strongly and weakly in $\Lp{2}_{\tau}(\Gamma_{1})$ in neither case. This is in particular of advantage, because we want to formulate our boundary conditions in $\Lp{2}_{\tau}(\Gamma_{1})$ and the framework of quasi Gelfand triples would regard one of $\cVtau(\Gamma_{1}) \cap \Lp{2}_{\tau}(\Gamma_{1})$ and $\Vtaud(\Gamma_{1}) \cap \Lp{2}_{\tau}(\Gamma_{1})$ strongly and the other one -- for duality reasons -- weakly. Therefore, now we avoid to have two different concepts of $\Lp{2}_{\tau}(\Gamma_{1})$ tangential traces.

\subsection{Contraction semigroup}\label{sec:generator}

Finally, we want to consider the actual question of this section.
It lies in the nature of block operators like
\(
\begin{bsmallmatrix}
  0 & \rot \\
  -\rot & 0
\end{bsmallmatrix}
\),
that if we define the domain of the upper right block strongly, then we have to define the domain of the (adjoint) lower left block weakly. The same goes for the corresponding traces. In this particular case, where both operators are basically the same, this leads to the strange situation where we have to introduce two a priori different definitions for the domain of the same operator. Luckily as we have discussed in this section both approaches coincide and consequently we just need one definition.

In order to justify that $A_{0}$ generates a contraction semigroup we want to use \cite[Thm.~5.3.6]{Sk-Phd} or originally \cite[Thm.~7.6 \& Ex.~7.8]{Sk21}. For convenience we provide \cite[Thm.~5.3.6]{Sk-Phd} as \Cref{th:generator-of-contraction-semigroup}. In order to understand how this theorem is applicable, we have to translate our setting into their notation. In particular our setting is a special case of a general theory.
As we have remarked at the beginning of this section our operator fits into the framework of \cite{Sk21}, because we can decompose the $\rot$ operator into $\rot = \sum_{i=1}^{3} \partial_{i} L_{i} = \diffop$, as we did in~\eqref{eq:decompose-rot}. The corresponding Hermitian transposed operator $\diffopad$ is in our case (by the skew-adjointness of $L_{i}$)
\begin{equation*}
  \diffopad \coloneqq \sum_{i=1} \partial_{i} L_{i}\hermitian = - \rot.
\end{equation*}
Consequently, the block operator $P_{\partial}$ that is regarded by the theory in \cite{Sk21,Sk-Phd} matches our block differential operator
\begin{equation*}
  P_{\partial} \coloneqq
  \begin{bmatrix}
    0 & \diffop \\
    \diffopad & 0
  \end{bmatrix}
  =
  \begin{bmatrix}
    0 & \rot \\
    -\rot & 0
  \end{bmatrix}.
\end{equation*}
In our case the additional $P_{0}$ is just $0$. For the boundary operators and boundary space we have the following translation
\begin{equation*}
  \pi_{L} = \tantr, \quad L_{\nu} = \tanxtr \quad\text{and}\quad \Lp{2}_{\pi}(\Gamma_{1}) = \Lp{2}_{\tau}(\Gamma_{1}).
\end{equation*}

\begin{theorem}[{\cite[Thm.~5.3.6]{Sk-Phd}}]\label{th:generator-of-contraction-semigroup}
  Let $M$ be a strictly positive linear operator on $\Lp{2}_{\pi}(\Gamma_{1})$. Then the operator $A = (P_{\partial} + P_{0}) \hamiltonian$ with domain
  \begin{align*}
    \dom A
    =
    \dset*{x \in \hamiltonian^{-1} \begin{bmatrix} \cH_{\Gamma_{0}}(\diffopad,\Omega) \\ \Hspace(\diffop,\Omega)\end{bmatrix}}{\pi_{L} (\hamiltonian x)_{L\hermitian} + M L_{\nu} (\hamiltonian x)_{L} = 0 \ \text{in}\ \Lp{2}_{\pi}(\Gamma_{1})}
  \end{align*}
  generates a contraction semigroup, where $\hamiltonian x = \begin{bsmallmatrix}(\hamiltonian x)_{L\hermitian} \\ (\hamiltonian x)_{L}\end{bsmallmatrix}$.
\end{theorem}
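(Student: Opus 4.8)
The plan is to verify the hypotheses of the Lumer--Phillips theorem for $A$ on $\Lp{2}(\Omega)$ equipped with the energy inner product $\scprod{\cdot}{\cdot}_{\hamiltonian}$, which is equivalent to $\scprod{\cdot}{\cdot}_{\Lp{2}(\Omega)}$ since $\hamiltonian$ is bounded and boundedly invertible. Concretely I would establish: (i) $\dom A$ is dense; (ii) $A$ is dissipative; (iii) $A$ is closed; (iv) $\ran(\lambda - A) = \Lp{2}(\Omega)$ for one, hence every, $\lambda > 0$ --- the standard route for (iv) being to exhibit the adjoint $A\adjun$ and to check that it is dissipative too. Throughout, for $x \in \dom A$ write $z \coloneqq \hamiltonian x$ with components $z_{1} = (\hamiltonian x)_{L\hermitian} \in \cH_{\Gamma_{0}}(\rot,\Omega)$ and $z_{2} = (\hamiltonian x)_{L} \in \Hspace(\rot,\Omega)$, recalling $\diffopad = -\rot$ and that $P_{0}=0$ here, so $(P_{\partial}+P_{0})\hamiltonian = P_{\partial}\hamiltonian$.

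Parts (i) and (ii) are routine. For (i): every $\phi \in \Cc(\Omega)^{2}$ has vanishing tangential traces, so $\hamiltonian^{-1}\phi$ trivially satisfies the boundary condition and lies in $\dom A$; since $\Cc(\Omega)^{2}$ is dense in $\Lp{2}(\Omega)$ and $\hamiltonian^{-1}$ is a homeomorphism, $\dom A$ is dense. For (ii),
\[
  \scprod{Ax}{x}_{\hamiltonian} = \scprod{P_{\partial}z}{z}_{\Lp{2}(\Omega)} = \scprod{\rot z_{2}}{z_{1}}_{\Lp{2}(\Omega)} - \scprod{\rot z_{1}}{z_{2}}_{\Lp{2}(\Omega)} ,
\]
and taking real parts, using $\Re\scprod{\rot z_{1}}{z_{2}}_{\Lp{2}(\Omega)} = \Re\scprod{z_{2}}{\rot z_{1}}_{\Lp{2}(\Omega)}$ and \Cref{le:int-by-parts-extended-to-max-dom} with $f = z_{2}$, $g = z_{1}$,
\[
  \Re\scprod{Ax}{x}_{\hamiltonian} = \Re\dualprod{\tanxtr z_{2}}{\tantr z_{1}}_{\Vtaud(\Gamma_{1}),\cVtau(\Gamma_{1})} = -\Re\scprod{\tanxtr z_{2}}{M\tanxtr z_{2}}_{\Lp{2}_{\tau}(\Gamma_{1})} \le 0 ,
\]
where the middle equality is because $x \in \dom A$ forces $\tantr z_{1} + M\tanxtr z_{2} = 0$ to hold in $\Lp{2}_{\tau}(\Gamma_{1})$ --- so both traces lie in the pivot space and the dual pairing collapses to the $\Lp{2}_{\tau}(\Gamma_{1})$ inner product --- and the inequality is strict positivity of $M$. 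The very same computation with a sign flip (from the skew-symmetry of $P_{\partial}$) shows that the operator $-P_{\partial}\hamiltonian$ restricted by the boundary condition $\tantr z_{1} - M\adjun\tanxtr z_{2} = 0$ is dissipative as well, and this is the natural candidate for $A\adjun$.

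For (iii) the subtlety is that the boundary condition is imposed in the pivot space $\Lp{2}_{\tau}(\Gamma_{1})$ while trace convergence, a priori, only happens in $\cVtau(\Gamma_{1})$ and $\Vtaud(\Gamma_{1})$. If $x_{n} \to x$ and $Ax_{n} \to g$ in $\Lp{2}(\Omega)$ with $x_{n} \in \dom A$, then $\hamiltonian x_{n} \to \hamiltonian x$ in the graph norm of $P_{\partial}$, hence $\tantr(\hamiltonian x_{n})_{L\hermitian} \to \tantr(\hamiltonian x)_{L\hermitian}$ in $\cVtau(\Gamma_{1})$ and $\tanxtr(\hamiltonian x_{n})_{L} \to \tanxtr(\hamiltonian x)_{L}$ in $\Vtaud(\Gamma_{1})$; since $(\cVtau(\Gamma_{1}),\Lp{2}_{\tau}(\Gamma_{1}),\Vtaud(\Gamma_{1}))$ is a quasi Gelfand triple --- so the inclusions of $\Lp{2}_{\tau}(\Gamma_{1})$ are closed maps --- and since the boundary condition ties the $\Lp{2}_{\tau}(\Gamma_{1})$-parts to one another, the limit traces stay in $\Lp{2}_{\tau}(\Gamma_{1})$ and satisfy $\tantr z_{1} + M\tanxtr z_{2} = 0$, i.e.\ $x \in \dom A$.

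The genuine obstacle is (iv): proving that the dissipative candidate above is \emph{all} of $A\adjun$, equivalently that $\lambda x - P_{\partial}\hamiltonian x = f$ is solvable in $\dom A$ for every $f \in \Lp{2}(\Omega)$ and $\lambda > 0$. Writing this block-wise and eliminating $z_{2} = (\hamiltonian x)_{L}$ through the Ampère/Faraday equation reduces it to a single coercive \emph{curl-curl} problem $\lambda^{2}\epsilon z_{1} + \rot\mu^{-1}\rot z_{1} = \lambda f_{1} + \rot\mu^{-1}f_{2}$ for $z_{1}$, posed on the space of $\Hspace(\rot,\Omega)$-fields with vanishing tangential trace on $\Gamma_{0}$ and an $\Lp{2}_{\tau}(\Gamma_{1})$-penalisation encoding the impedance term $M$; Lax--Milgram then yields a unique weak solution. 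What remains --- recovering the $\Lp{2}_{\tau}(\Gamma_{1})$-regularity of $\tantr z_{1}$ and $\tanxtr z_{2}$ and confirming $x \in \dom A$ --- is exactly where the structural input on the boundary spaces is needed: surjectivity of the tangential traces onto $\Vtau(\Gamma_{1})$ and $\Vtaud(\Gamma_{1})$, and the density statements making the weak and strong $\Lp{2}_{\tau}(\Gamma_{1})$-traces agree, i.e.\ the (formerly open) questions of the type \eqref{eq:dual-density-question} settled in \cite{SkPa23}. Essentially all the work sits in this last step; granting it, Lumer--Phillips delivers the contraction semigroup, and the complete argument is \cite[Thm.~5.3.6]{Sk-Phd} (originally \cite[Thm.~7.6 \& Ex.~7.8]{Sk21}).
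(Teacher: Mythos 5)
The paper itself does not prove this theorem: it is imported verbatim from \cite[Thm.~5.3.6]{Sk-Phd} (originally \cite[Thm.~7.6 \& Ex.~7.8]{Sk21}), with \cite{WeSt13} mentioned as an alternative source for the generation statement. So your outline can only be judged on its own merits, and there it has a genuine gap: the maximality/range condition (iv), which you yourself identify as the place where ``essentially all the work sits'', is not actually carried out. The Lax--Milgram reduction is the right idea, but to make it a proof you must (a) fix the variational space --- essentially $\hH_{\partial\Omega}(\rot,\Omega)\cap\cH_{\Gamma_{0}}(\rot,\Omega)$ with the norm containing $\norm{\tantr z_{1}}_{\Lp{2}_{\tau}(\Gamma_{1})}$ --- and write the impedance term as $\lambda\scprod{M^{-1}\tantr z_{1}}{\tantr w}_{\Lp{2}_{\tau}(\Gamma_{1})}$, checking coercivity of this boundary term (this uses boundedness of $M$, not only its strict positivity); and (b) recover from the weak solution $z_{1}$ the second component $z_{2}=\lambda^{-1}\mu^{-1}(f_{2}-\rot z_{1})$, verify $\rot z_{2}\in\Lp{2}(\Omega)$ by testing with $\Cc(\Omega)$, and then, testing with general elements of the variational space and comparing with \Cref{le:int-by-parts-extended-to-max-dom}, conclude that $\tanxtr z_{2}$ lies (weakly) in $\Lp{2}_{\tau}(\Gamma_{1})$ and equals $-M^{-1}\tantr z_{1}$, so that $x=\hamiltonian^{-1}z\in\dom A$. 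Step (b) is precisely what you defer to the cited thesis and to \cite{SkPa23}; invoking the very reference whose theorem is being proved makes the argument circular as a standalone proof, and in fact the density results of \cite{SkPa23} are not needed here if one works with the weak trace definition throughout.

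A secondary issue is your closedness argument (iii): knowing $\tantr(\hamiltonian x_{n})_{L\hermitian}\to\tantr(\hamiltonian x)_{L\hermitian}$ in $\cVtau(\Gamma_{1})$ and $\tanxtr(\hamiltonian x_{n})_{L}\to\tanxtr(\hamiltonian x)_{L}$ in $\Vtaud(\Gamma_{1})$, together with the boundary condition for each $n$, does \emph{not} by itself put the limit traces into $\Lp{2}_{\tau}(\Gamma_{1})$; the closed-embedding property of the quasi Gelfand triple only helps once you also have convergence (or at least boundedness) of the traces in $\Lp{2}_{\tau}(\Gamma_{1})$. The standard repair is to apply your dissipativity identity to the differences $x_{n}-x_{m}$: strict positivity of $M$ gives $c\norm{\tanxtr(\hamiltonian(x_{n}-x_{m}))_{L}}^{2}_{\Lp{2}_{\tau}(\Gamma_{1})}\le\abs{\Re\scprod{A(x_{n}-x_{m})}{x_{n}-x_{m}}_{\hamiltonian}}$, so the traces are Cauchy in $\Lp{2}_{\tau}(\Gamma_{1})$, and only then do the closed embeddings identify the limits and preserve the boundary condition. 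Alternatively you may drop (iii) altogether, since a densely defined dissipative operator with $\ran(\lambda-A)=\Lp{2}(\Omega)$ for some $\lambda>0$ is automatically closed --- which puts all the weight back on the unproven step (iv). Your parts (i) and (ii) are correct and match the natural (Lumer--Phillips) route of the cited works.
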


Note that the previous theorem applied to our situation has the allegedly weaker condition
\begingroup
\renewcommand{\quad}{\mspace{8mu}}
\begin{equation*}
  \begin{bmatrix}
    \vec{D} \\ \vec{B}
  \end{bmatrix}
  \in
  \hamiltonian^{-1}
  \begin{bmatrix}
    \cH_{\Gamma_{0}}(\rot,\Omega) \\
    \Hspace(\rot,\Omega)
  \end{bmatrix}
  \quad\text{instead of}\quad
  \begin{bmatrix}
    \vec{D} \\ \vec{B}
  \end{bmatrix}
  \in
  \hamiltonian^{-1}
  \begin{bmatrix}
    \hH_{\partial\Omega}(\rot,\Omega) \cap \cH_{\Gamma_{0}}(\rot,\Omega) \\
    \hH_{\Gamma_{1}}(\rot,\Omega)
  \end{bmatrix}.
\end{equation*}
\endgroup
However, since the boundary condition is formulated in $\Lp{2}_{\tau}(\Gamma_{1})$, we can also integrate this directly into the space and obtain our original condition.\footnote{To be precise---and avoid the open problem before \eqref{eq:new-open-problem}---, being in $\Lp{2}_{\pi}(\Gamma_{1})$ in \Cref{th:generator-of-contraction-semigroup} has to be understood in the sense of being in the pivot space of the corresponding quasi Gelfand triple, which automatically avoids this possible ambiguity.}
Therefore, we finally conclude the following corollary.

\begin{corollary}
  The operator $A_{0}$ from \eqref{eq:def-A-0} generates a contraction semigroup.
\end{corollary}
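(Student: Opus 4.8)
The plan is to apply \Cref{th:generator-of-contraction-semigroup} directly, using the dictionary between the two notational frameworks spelled out just above that theorem. Concretely, I would take $P_{\partial} = \begin{bsmallmatrix} 0 & \rot \\ -\rot & 0\end{bsmallmatrix}$, $P_{0} = 0$, $\pi_{L} = \tantr$, $L_{\nu} = \tanxtr$, $\Lp{2}_{\pi}(\Gamma_{1}) = \Lp{2}_{\tau}(\Gamma_{1})$, let the strictly positive operator $M$ of the theorem be the feedback operator $k$, and let the metric operator be $\hamiltonian = \begin{bsmallmatrix}\epsilon^{-1} & 0 \\ 0 & \mu^{-1}\end{bsmallmatrix}$.

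The first, routine, step is to check the hypotheses. Strict positivity, boundedness and bounded invertibility of $\hamiltonian$ on $\Lp{2}(\Omega)$ follow from $c^{-1} \leq \epsilon \leq c$ and $c^{-1} \leq \mu \leq c$; strict positivity of $k$ as a multiplication operator on $\Lp{2}_{\tau}(\Gamma_{1})$ follows from $c^{-1} \leq k \leq c$. The skew-adjointness of the matrices $L_{1}, L_{2}, L_{3}$ in \eqref{eq:decompose-rot} gives $\diffopad = -\rot$, so the block operator $P_{\partial}$ of the cited theorem is exactly $\begin{bsmallmatrix} 0 & \rot \\ -\rot & 0\end{bsmallmatrix}$, which matches our operator, and $P_{0} = 0$ is admissible.

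With these identifications, \Cref{th:generator-of-contraction-semigroup} yields that $(P_{\partial}+P_{0})\hamiltonian = \begin{bsmallmatrix} 0 & \rot \\ -\rot & 0\end{bsmallmatrix}\hamiltonian$ generates a contraction semigroup on the set of all $\begin{bsmallmatrix}\vec{D}\\\vec{B}\end{bsmallmatrix} \in \hamiltonian^{-1}\begin{bsmallmatrix} \cH_{\Gamma_{0}}(\rot,\Omega) \\ \Hspace(\rot,\Omega)\end{bsmallmatrix}$ satisfying $\tantr[\Gamma_{1}]\epsilon^{-1}\vec{D} + k\tanxtr[\Gamma_{1}]\mu^{-1}\vec{B} = 0$ in $\Lp{2}_{\tau}(\Gamma_{1})$. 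The operator action agrees with $A_{0}$ by construction, so the only remaining task is to recognize this domain as $\dom A_{0}$. The key observation is that the boundary condition is posed in the pivot space $\Lp{2}_{\tau}(\Gamma_{1})$, which forces $\tantr[\Gamma_{1}]\epsilon^{-1}\vec{D} \in \Lp{2}_{\tau}(\Gamma_{1})$ and $\tanxtr[\Gamma_{1}]\mu^{-1}\vec{B} \in \Lp{2}_{\tau}(\Gamma_{1})$; combined with $\epsilon^{-1}\vec{D} \in \cH_{\Gamma_{0}}(\rot,\Omega)$ this means $\epsilon^{-1}\vec{D} \in \hH_{\partial\Omega}(\rot,\Omega) \cap \cH_{\Gamma_{0}}(\rot,\Omega)$ and $\mu^{-1}\vec{B} \in \hH_{\Gamma_{1}}(\rot,\Omega)$, which is precisely the space appearing in \eqref{eq:def-A-0}.

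The one subtlety I would treat with care is the meaning of ``lying in $\Lp{2}_{\tau}(\Gamma_{1})$'' for the two traces: in the quasi Gelfand triple $(\cVtau(\Gamma_{1}),\Lp{2}_{\tau}(\Gamma_{1}),\Vtaud(\Gamma_{1}))$ one side is naturally regarded \emph{strongly} in the pivot space and the other \emph{weakly}, and a priori these two notions could differ. This is exactly where the density results of \cite{SkPa23} (answering the open problems from \cite{WeSt13}) enter: they guarantee that the weak and strong $\Lp{2}_{\tau}(\Gamma_{1})$ tangential traces coincide, so the boundary condition is unambiguous and the domain produced by \Cref{th:generator-of-contraction-semigroup} is exactly $\dom A_{0}$. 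That identification is the only real content here; everything else is bookkeeping, and it is also the step most likely to hide an imprecision if one is careless about which trace lives in which boundary space.
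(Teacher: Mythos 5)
Your proposal is correct and follows essentially the same route as the paper: invoking \Cref{th:generator-of-contraction-semigroup} via the notational dictionary ($P_{\partial}$, $P_{0}=0$, $\pi_{L}=\tantr$, $L_{\nu}=\tanxtr$, $M=k$) and then recognizing that posing the boundary condition in the pivot space $\Lp{2}_{\tau}(\Gamma_{1})$ upgrades the a priori weaker domain to the one in \eqref{eq:def-A-0}. Your appeal to the coincidence of weak and strong $\Lp{2}_{\tau}(\Gamma_{1})$ traces from \cite{SkPa23} is the same point the paper handles in its surrounding discussion and footnote, so there is no substantive difference in approach.
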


Alternatively, it was also shown in \cite{WeSt13} that $A_{0}$ generates a contraction semigroup.

Note for elements in the domain of $A_{0}$ we can apply the extended integration by parts formula \Cref{le:int-by-parts-extended-to-max-dom} and replace the dual pairing by an $\Lp{2}_{\tau}(\Gamma_{1})$ inner product as all elements in $\dom A_{0}$ have $\Lp{2}$ tangential traces. Hence, applying this integration by parts formula twice for both rows of $A_{0}$ gives the following lemma.

\begin{lemma}\label{le:int-by-parts-for-A}
  For
  \(
  x =
  \begin{bsmallmatrix}
    \vec{D}_{x} \\ \vec{B}_{x}
  \end{bsmallmatrix}
  ,
  y =
  \begin{bsmallmatrix}
    \smash[b]{\vec{D}_{y}} \\ \smash[b]{\vec{B}_{y}}
  \end{bsmallmatrix}
  \in \dom A_{0}
  \)
  we have
  \begin{align*}
    \scprod{A_{0}x}{y}_{\hamiltonian} + \scprod{x}{A_{0}y}_{\hamiltonian}
    = \scprod*{\tantr \epsilon^{-1} \vec{D}_{x}}{\tanxtr \mu^{-1} \vec{B}_{y}}_{\Lp{2}(\Gamma_{1})}
    + \scprod*{\tanxtr \mu^{-1} \vec{B}_{x}}{\tantr \epsilon^{-1} \vec{D}_{y}}_{\Lp{2}(\Gamma_{1})}
  \end{align*}
  and in particular
  \begin{align*}
    \Re \scprod{A_{0}x}{x}_{\hamiltonian} = \Re \scprod*{\tantr \epsilon^{-1} \vec{D}_{x}}{\tanxtr \mu^{-1} \vec{B}_{x}}_{\Lp{2}(\Gamma_{1})}.
  \end{align*}
\end{lemma}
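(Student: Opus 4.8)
The plan is to reduce everything to the extended integration by parts formula \Cref{le:int-by-parts-extended-to-max-dom}, applied once for each row of $A_{0}$, and then to replace the resulting dual pairings by $\Lp{2}_{\tau}(\Gamma_{1})$ inner products. First I would unravel the definition of $A_{0}$: for $x=\begin{bsmallmatrix}\vec{D}_{x}\\\vec{B}_{x}\end{bsmallmatrix}\in\dom A_{0}$ one has $\hamiltonian x=\begin{bsmallmatrix}\epsilon^{-1}\vec{D}_{x}\\\mu^{-1}\vec{B}_{x}\end{bsmallmatrix}$ and $A_{0}x=\begin{bsmallmatrix}\rot\mu^{-1}\vec{B}_{x}\\-\rot\epsilon^{-1}\vec{D}_{x}\end{bsmallmatrix}$, and likewise for $y$. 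Using $\scprod{u}{v}_{\hamiltonian}=\scprod{u}{\hamiltonian v}_{\Lp{2}(\Omega)}=\scprod{\hamiltonian u}{v}_{\Lp{2}(\Omega)}$, the quantity $\scprod{A_{0}x}{y}_{\hamiltonian}+\scprod{x}{A_{0}y}_{\hamiltonian}$ expands into four $\Lp{2}(\Omega)$ inner products, which I would regroup into the two brackets $\bigl[\scprod{\rot\mu^{-1}\vec{B}_{x}}{\epsilon^{-1}\vec{D}_{y}}_{\Lp{2}(\Omega)}-\scprod{\mu^{-1}\vec{B}_{x}}{\rot\epsilon^{-1}\vec{D}_{y}}_{\Lp{2}(\Omega)}\bigr]$ and $\bigl[\scprod{\epsilon^{-1}\vec{D}_{x}}{\rot\mu^{-1}\vec{B}_{y}}_{\Lp{2}(\Omega)}-\scprod{\rot\epsilon^{-1}\vec{D}_{x}}{\mu^{-1}\vec{B}_{y}}_{\Lp{2}(\Omega)}\bigr]$, the second being the complex conjugate of an expression of the same shape.

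Both brackets now have the form $\scprod{\rot f}{g}_{\Lp{2}(\Omega)}-\scprod{f}{\rot g}_{\Lp{2}(\Omega)}$ with $f\in\Hspace(\rot,\Omega)$ and $g\in\cH_{\Gamma_{0}}(\rot,\Omega)$: membership of $x,y$ in $\dom A_{0}$ is exactly what guarantees $\mu^{-1}\vec{B}_{x},\mu^{-1}\vec{B}_{y}\in\hH_{\Gamma_{1}}(\rot,\Omega)\subseteq\Hspace(\rot,\Omega)$ and $\epsilon^{-1}\vec{D}_{x},\epsilon^{-1}\vec{D}_{y}\in\hH_{\partial\Omega}(\rot,\Omega)\cap\cH_{\Gamma_{0}}(\rot,\Omega)\subseteq\cH_{\Gamma_{0}}(\rot,\Omega)$. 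Applying \Cref{le:int-by-parts-extended-to-max-dom} to each bracket (and taking the conjugate for the second one, with the convention $\dualprod{\phi}{\psi}_{\cVtau(\Gamma_{1}),\Vtaud(\Gamma_{1})}=\conj{\dualprod{\psi}{\phi}_{\Vtaud(\Gamma_{1}),\cVtau(\Gamma_{1})}}$) turns the sum into $\dualprod{\tanxtr\mu^{-1}\vec{B}_{x}}{\tantr\epsilon^{-1}\vec{D}_{y}}_{\Vtaud(\Gamma_{1}),\cVtau(\Gamma_{1})}+\dualprod{\tantr\epsilon^{-1}\vec{D}_{x}}{\tanxtr\mu^{-1}\vec{B}_{y}}_{\cVtau(\Gamma_{1}),\Vtaud(\Gamma_{1})}$. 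Since every element of $\dom A_{0}$ carries an $\Lp{2}$ tangential trace on $\Gamma_{1}$ — the traces $\tantr\epsilon^{-1}\vec{D}_{x},\tantr\epsilon^{-1}\vec{D}_{y}$ even on all of $\partial\Omega$, and $\tanxtr\mu^{-1}\vec{B}_{x},\tanxtr\mu^{-1}\vec{B}_{y}$ on $\Gamma_{1}$ — each dual pairing collapses to the corresponding $\Lp{2}_{\tau}(\Gamma_{1})$ inner product, and reordering the two summands gives the asserted identity.

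The ``in particular'' statement then follows by putting $y=x$: the left-hand side becomes $2\Re\scprod{A_{0}x}{x}_{\hamiltonian}$ while the right-hand side becomes $\scprod{\tantr\epsilon^{-1}\vec{D}_{x}}{\tanxtr\mu^{-1}\vec{B}_{x}}_{\Lp{2}(\Gamma_{1})}+\conj{\scprod{\tantr\epsilon^{-1}\vec{D}_{x}}{\tanxtr\mu^{-1}\vec{B}_{x}}_{\Lp{2}(\Gamma_{1})}}=2\Re\scprod{\tantr\epsilon^{-1}\vec{D}_{x}}{\tanxtr\mu^{-1}\vec{B}_{x}}_{\Lp{2}(\Gamma_{1})}$, so one divides by two.

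I do not expect any serious obstacle here; the argument is pure bookkeeping resting on \Cref{le:int-by-parts-extended-to-max-dom}. The two places requiring a little care are (i) verifying that in each application of that lemma the first slot is filled by an element of $\Hspace(\rot,\Omega)$ and the second by an element of $\cH_{\Gamma_{0}}(\rot,\Omega)$ — which is precisely how $\dom A_{0}$ is designed — and (ii) correctly tracking the complex conjugation that brings the second bracket into the exact shape of the left-hand side of that lemma, so that the two boundary terms end up in the stated order.
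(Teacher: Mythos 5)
Your proposal is correct and follows exactly the route the paper takes: the lemma is obtained by applying the extended integration by parts formula (\Cref{le:int-by-parts-extended-to-max-dom}) once for each row of $A_{0}$ and then replacing the dual pairings by $\Lp{2}_{\tau}(\Gamma_{1})$ inner products, which is legitimate precisely because $\dom A_{0}$ is built so that all tangential traces lie in $\Lp{2}_{\tau}(\Gamma_{1})$. Your bookkeeping of the two brackets, the conjugation convention, and the specialization $y=x$ are all accurate.
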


\section*{Acknowledgement}
We thank Dirk Pauly for making us aware of the unique continuation principle for Maxwell's equations.

\bibliographystyle{abbrvurl}

\bibliography{bibfile}{}

\end{document}